\numberwithin{equation}{section}
\numberwithin{figure}{section}
\theoremstyle{plain}
\newtheorem{thm}{\protect\theoremname}[section]
\theoremstyle{plain}
\newtheorem{cor}[thm]{\protect\corollaryname}
\theoremstyle{plain}
\newtheorem{lem}[thm]{\protect\lemmaname}
\theoremstyle{plain}
\newtheorem{prop}[thm]{\protect\propositionname}
\theoremstyle{remark}
\newtheorem{rem}[thm]{\protect\remarkname}
\theoremstyle{remark}
\newtheorem{claim}[thm]{\protect\claimname}
\theoremstyle{definition}
\newtheorem{defn}[thm]{\protect\definitionname}
\DeclareMathOperator{\supp}{supp}
\providecommand{\claimname}{Claim}
\providecommand{\corollaryname}{Corollary}
\providecommand{\definitionname}{Definition}
\providecommand{\lemmaname}{Lemma}
\providecommand{\propositionname}{Proposition}
\providecommand{\remarkname}{Remark}
\providecommand{\theoremname}{Theorem}
\begin{document}
\title{A note on one sided extrapolation of compactness and applications}
\begin{abstract}
In this paper one sided counterparts of compactness extrapolation
results of Hytönen and Lappas \cite{HL} are provided. As a consequence
of those results, compactness results for one sided singular integrals,
commutators of one sided fractional integrals and a certain class
of $L^{r}$-Hörmander operators are provided. The results for commutators
of $L^{r}$-Hörmander operators seem new even in the classical setting.
\end{abstract}

\author{Francisco J. Martín-Reyes}
\address{(Francisco J. Martín-Reyes) Departamento de Análisis Matemático, Estadística
e Investigación Operativa y Matemática Aplicada. Facultad de Ciencias.
Universidad de Málaga (Málaga, Spain).}
\email{martin\_reyes@uma.es}
\author{Israel P. Rivera-Ríos}
\address{(Israel P. Rivera Ríos) Departamento de Análisis Matemático, Estadística
e Investigación Operativa y Matemática Aplicada. Facultad de Ciencias.
Universidad de Málaga (Málaga, Spain). }
\email{israelpriverarios@uma.es}
\thanks{Both authors were partially supported by Junta de Andalucía FQM 354
and by Ministerio de Ciencia e Innovación, Spain, grant PID2022-136619NB-I00.}
\subjclass[2000]{42B20, 42B25.}
\maketitle

\section{Introduction and main results}

We recall that an operator $T:X\rightarrow Y$ between two Banach
spaces is compact if $T(B_{X})$ (where $B_{X}$ is the closed unit
ball in $X$) has compact closure in $Y$.

Within the framework of singular integrals and related operators,
probably the first result on compact operators was obtained by Uchiyama
\cite{U}, who showed that if $T$ is a homogeneous singular integral
with smooth kernel and $b\in CMO$, namely, $b$ is a function in
the closure of $C_{c}^{\infty}(\mathbb{R}^{n})$ in the $BMO$ norm,
then, the commutator 
\[
[b,T]=b(x)Tf(x)-T(bf)(x)
\]
is a compact operator on $L^{p}(\mathbb{R}^{n})$ for every $1<p<\infty$. 

Since Uchiyama's paper, a number of authors have devoted a number
of works to obtain results on compactness of several operators, such
as Calderón-Zygmund operators \cite{V,PPV,V2,MS,OP}, or commutators
of $CMO$ symbols with several operators (see papers such as \cite{unW1,unW2,unW3,unW4,unW5,unW6}).

In the weighted setting, motivated by the study of the Beltrami equation,
Clop and Cruz \cite{CC} provided a counterpart of Uchiyama's result.
We recall that $w$ is a weight if it is a positive and finite a.e.
function and that $w\in A_{p}$ for $p>1$ if 
\[
\sup_{Q}\left(\frac{1}{|Q|}\int_{Q}w\right)\left(\frac{1}{|Q|}\int_{Q}w^{-\frac{1}{p-1}}\right)^{p-1}<\infty.
\]
As we were saying, Clop and Cruz showed that if $T$ is a Calderón-Zygmund
operator and $b\in CMO$ then for every $1<p<\infty$ and every $w\in A_{p}$
we have that $[b,T]$ is compact on $L^{p}(w)$. 

In the last years, compactness in the weighted setting has been an
active field of research, see for instance papers such as \cite{rec1,rec2,GWY,rec4,rec5}.
Even more recently some results related to compactness of commutators
in the two weight setting have appeared \cite{LL,MM,MMW}.

Our purpose in this paper is to provide results of compactness in
the one sided weighted setting. As far as we know, just one paper
has been devoted to this topic. García and Ortega \cite{GO} showed
that if $T$ is a one sided Calderón-Zygmund operator with kernel
supported in $(-\infty,0)$ and $b\in CMO$ then $[b,T]$ is compact
on $L^{p}(w)$ for $w\in A_{p}^{+}$ (see the definition of $A_{p}^{+}$
a few lines below). 

Our main results in this paper are related to extrapolation of the
compactness in the one sided setting. Before presenting them let us
briefly discuss the state of the art in the classical setting. Very
recently Hytönen and Lappas \cite{HL} settled an interesting result.
We remit the interested reader as well to \cite{HL2} for a partial
extension to the bilinear setting and \cite{COY} for a full extension
to the multilinear setting and a number of applications. Hytönen and
Lapas' result says that assuming that a linear or sublinear operator
$T$ is bounded on $L^{p}(w_{0})$ for some $p_{0}\in(\lambda,\infty)$
and every $w_{0}\in A_{p_{0}/\lambda}$ where $\lambda\geq1$ (with
operator norm dominated by an increasing function in $[w_{0}]_{A_{p_{0/\lambda}}}$)
which is compact on $L^{p_{1}}(w_{1})$ for some $p_{1}\in(\lambda,\infty)$
and just some $w_{1}\in A_{p_{1}/\lambda}$ we have that $T$ is compact
on $L^{p}(w)$ for every $p\in(\lambda,\infty)$ and every $w\in A_{p/\lambda}$. 

Our first main theorem is a one sided version of that result. Before
presenting the statement we recall that $w\in A_{p}^{+}$ for $1<p<\infty$
if 
\[
[w]_{A_{p}^{+}}=\sup_{a<b<c}\frac{1}{c-a}\int_{a}^{b}w\left(\frac{1}{c-a}\int_{b}^{c}w^{-\frac{1}{p-1}}\right)^{p-1}<\infty
\]
and analogously that $w\in A_{p}^{-}$ if 
\[
[w]_{A_{p}^{-}}=\sup_{a<b<c}\frac{1}{c-a}\int_{b}^{c}w\left(\frac{1}{c-a}\int_{a}^{b}w^{-\frac{1}{p-1}}\right)^{p-1}<\infty.
\]

\begin{thm}
\label{thm:CompactExtrapolationAp}Let $\lambda\geq1$ and $p_{0},p_{1}\in(\lambda,\infty)$.
Let $T$ be a linear or a sublinear bounded on $L^{p_{0}}(w_{0})$
for every $w_{0}\in A_{p_{0}/\lambda}^{+}$ with the operator norm
dominated by some increasing function of $[w_{0}]_{A_{p_{0/\lambda}}^{+}}$.
Suppose in addition that $T$ is compact on $L^{p_{1}}(w_{1})$ for
some $w_{1}\in A_{p_{1}/\lambda}^{-}$. Then $T$ is compact on $L^{p}(w)$
for all $p\in(\lambda,\infty)$ and all $w\in A_{p/\lambda}^{+}$.
\end{thm}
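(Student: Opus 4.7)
The plan is to adapt the strategy of Hyt\"onen and Lappas \cite{HL} to the one-sided framework, by combining one-sided Rubio de Francia extrapolation with a Calder\'on-type complex interpolation that preserves compactness. First I would apply the one-sided version of Rubio de Francia extrapolation in the $A_{p}^{+}$ scale to the boundedness hypothesis, so as to upgrade $T$ to a bounded operator on $L^{q}(v)$ for every $q\in(\lambda,\infty)$ and every $v\in A_{q/\lambda}^{+}$, with operator norm controlled by an increasing function of $[v]_{A_{q/\lambda}^{+}}$.

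Given a target pair $(p,w)$ with $p>\lambda$ and $w\in A_{p/\lambda}^{+}$, I would then set up an interpolation triple between a bounded endpoint produced by the previous step and the compact endpoint $(p_{1},w_{1})$ supplied by hypothesis. Concretely, I would choose $\theta\in(0,1)$ close to $0$ and an auxiliary exponent $q_{0}\in(\lambda,\infty)$ satisfying $\tfrac{1}{p}=\tfrac{1-\theta}{q_{0}}+\tfrac{\theta}{p_{1}}$, together with an auxiliary weight $v_{0}$ realizing the factorization
\[
w^{1/p}=v_{0}^{(1-\theta)/q_{0}}\,w_{1}^{\theta/p_{1}}.
\]
Once the data is in place, Calder\'on's complex interpolation theorem for compact operators between weighted Lebesgue spaces---which applies at the level of arbitrary positive measures and does not see the one-sided nature of the weights---would combine the $L^{q_{0}}(v_{0})$ boundedness with the $L^{p_{1}}(w_{1})$ compactness to yield compactness on $L^{p}(w)$.

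The technical heart of the argument, and the main obstacle, is the verification that the auxiliary weight $v_{0}$ actually belongs to $A_{q_{0}/\lambda}^{+}$. In the two-sided setting of \cite{HL} this issue is invisible, because the class $A_{p}$ is symmetric under the dualization $w\mapsto w^{1-p'}$; in the one-sided setting, however, this very dualization \emph{swaps} $A_{p}^{+}$ with $A_{p}^{-}$, so the appearance of the ``wrong-sided'' factor $w_{1}^{\theta/p_{1}}$ in the factorization of an $A^{+}$ target $w$ is a genuine asymmetry. To handle it, I would take $\theta$ small and exploit the openness of the $A_{p/\lambda}^{+}$ class together with the quantitative reverse H\"older inequality for one-sided Muckenhoupt weights; alternatively, a one-sided Jones-type factorization for $A^{+}$ weights could be invoked to rewrite the asymmetric product as a product of $A^{+}$ factors. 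Once $v_{0}\in A_{q_{0}/\lambda}^{+}$ is secured, the remainder of the argument is an abstract interpolation.
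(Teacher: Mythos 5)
Your high-level architecture matches the paper's: upgrade boundedness by one-sided Rubio de Francia extrapolation (Theorem~\ref{thm:Extrapolation}), realize $L^{p}(w)$ as a complex interpolation space between a bounded $A^{+}$ endpoint and the compact endpoint $(p_{1},w_{1})$, and then invoke a compactness-preserving interpolation theorem. You also correctly identify the crux: proving that the auxiliary weight $w_{0}$, forced by the convexity relation $w^{1/p}=w_{0}^{(1-\theta)/p_{0}}w_{1}^{\theta/p_{1}}$, actually lies in $A_{p_{0}/\lambda}^{+}$, and that one-sided reverse H\"older together with small $\theta$ is the right tool.

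There are two concrete gaps. First, ``Calder\'on's complex interpolation for compact operators'' is not a theorem: plain complex interpolation is not known to preserve compactness. The paper invokes the Cwikel--Kalton theorem (Theorem~\ref{thm:CwKa}) and checks one of its side conditions --- here condition~4, satisfied automatically because $L^{p_{j}}(w_{j})$ are complexified Banach lattices over $\mathbb{R}$ (Lemma~\ref{lem:Cond4}). Your appeal to a generic interpolation theorem leaves this nontrivial ingredient unstated. Second, and more substantially, your plan to ``take $\theta$ small and apply quantitative reverse H\"older'' omits a genuinely one-sided complication. The one-sided reverse H\"older inequality (Lemma~\ref{lem:LemRHI}) gains higher integrability only over a \emph{half} of the interval: for $w\in A_{p}^{+}$ one controls $\bigl(\tfrac{1}{|I^{-}|}\int_{I^{-}}w^{r}\bigr)^{1/r}$ by $\tfrac{1}{|I|}\int_{I}w$, and symmetrically on $I^{+}$ for $A^{-}$. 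Because $w\in A^{+}$ but $w^{-1/(p-1)}\in A^{-}$, the two factors in the $A_{p_{0}}^{+}$ test must be measured on quarter-intervals at \emph{opposite} ends, $I_{1}^{4}$ and $I_{2}^{4}$, leaving a gap in the middle. One then only obtains a gapped $A_{p_{0}}^{+}$-type estimate, and a separate lemma (Lemma~\ref{lem:Gap}, proved via the weak $(1,1)$ inequality for a weighted maximal operator) is needed to remove the gap and recover the honest $A_{p_{0}}^{+}$ condition. In the two-sided setting of \cite{HL} this step does not arise, since the usual reverse H\"older inequality holds on the full cube. Your proposal does not account for this, and it is precisely where the one-sided argument diverges from the classical one.

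A small clarification on your ``wrong-sided factor'' remark: the hypothesis $w_{1}\in A_{p_{1}/\lambda}^{-}$ is not an obstacle to be worked around but the \emph{correct} assumption that makes the factorization compatible with one-sided duality. Since $u\mapsto u^{-1/(p-1)}$ swaps $A^{+}$ and $A^{-}$, having $w_{1}\in A^{-}$ ensures $w_{1}^{-1/(p_{1}-1)}\in A^{+}$, which is exactly what the left-end reverse H\"older requires. Remark~\ref{rem:HyAp+-} in the paper shows that assuming $w_{1}\in A_{p_{1}/\lambda}^{+}$ instead can force $w_{0}$ to be integrable near $+\infty$ and hence fail to be a one-sided weight at all, so the conclusion of the interpolation lemma would simply be false.
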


In the one-sided setting, the natural hypothesis for $w_{1}$ that
one may expect to work is that $w_{1}\in A_{p_{1}/\lambda}^{+}$.
However, our approach leads to $w_{1}\in A_{p_{1}/\lambda}^{-}$.
In Remark \ref{rem:HyAp+-} we will provide a further insight on this.
All in all, since $A_{p_{1}/\lambda}\subset A_{p_{1}/\lambda}^{-}$
we have the following corollary.
\begin{cor}
\label{thm:CorAp}Let $\lambda\geq1$ and $p_{0},p_{1}\in(\lambda,\infty)$.
Let $T$ be a linear or a sublinear bounded on $L^{p_{0}}(w_{0})$
for every $w_{0}\in A_{p_{0}/\lambda}^{+}$ with the operator norm
dominated by some increasing function of $[w_{0}]_{A_{p_{0/\lambda}}^{+}}$.
Suppose in addition that $T$ is compact on $L^{p_{1}}(w_{1})$ for
some $w_{1}\in A_{p_{1}/\lambda}$. Then $T$ is compact on $L^{p}(w)$
for all $p\in(\lambda,\infty)$ and all $w\in A_{p/\lambda}^{+}$.
\end{cor}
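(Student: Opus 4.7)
The plan is to deduce the corollary directly from Theorem \ref{thm:CompactExtrapolationAp} using the elementary set inclusion $A_{q} \subset A_{q}^{-}$ (for every $q > 1$) that the authors flag in the paragraph preceding the statement. The argument therefore has only two steps: first verify this inclusion, then invoke Theorem \ref{thm:CompactExtrapolationAp} with $q = p_{1}/\lambda$.

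For the first step, the containment is immediate from monotonicity of the integral. Given any triple $a < b < c$, both $\int_{b}^{c} w$ and $\int_{a}^{b} w^{-1/(q-1)}$ are bounded by the corresponding integrals over the full interval $(a,c)$. Combining these two inequalities after the obvious normalisation yields
\[
\frac{1}{c-a}\int_{b}^{c} w \left(\frac{1}{c-a}\int_{a}^{b} w^{-\frac{1}{q-1}}\right)^{q-1} \leq \frac{1}{c-a}\int_{a}^{c} w \left(\frac{1}{c-a}\int_{a}^{c} w^{-\frac{1}{q-1}}\right)^{q-1},
\]
so taking the supremum over all triples $a<b<c$ gives $[w]_{A_{q}^{-}} \leq [w]_{A_{q}}$, and in particular $A_{q} \subset A_{q}^{-}$.

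For the second step, applying the inclusion with $q = p_{1}/\lambda$ shows that the hypothesised weight $w_{1} \in A_{p_{1}/\lambda}$ lies in $A_{p_{1}/\lambda}^{-}$. The remaining assumptions on $T$ (linearity or sublinearity, boundedness on $L^{p_{0}}(w_{0})$ for every $w_{0} \in A_{p_{0}/\lambda}^{+}$ with operator norm controlled by an increasing function of $[w_{0}]_{A_{p_{0}/\lambda}^{+}}$, and compactness on $L^{p_{1}}(w_{1})$) transfer verbatim to the hypotheses of Theorem \ref{thm:CompactExtrapolationAp}. That theorem then delivers the desired compactness of $T$ on $L^{p}(w)$ for every $p \in (\lambda,\infty)$ and every $w \in A_{p/\lambda}^{+}$. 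I do not foresee any real obstacle, since all the substantive work has been carried out in Theorem \ref{thm:CompactExtrapolationAp}; this corollary merely repackages it under the stronger and more frequently checked assumption that $w_{1}$ is a two-sided Muckenhoupt weight.
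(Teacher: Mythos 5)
Your proof is correct and matches the paper's approach exactly: the corollary is deduced from Theorem \ref{thm:CompactExtrapolationAp} via the inclusion $A_{p_1/\lambda}\subset A_{p_1/\lambda}^-$, which the paper states without proof immediately before the corollary and which you verify correctly by a direct integral comparison.
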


A particular case of interest that is quite useful in applications
is that $w_{1}\equiv1$, namely that $T$ is compact in the unweighted
setting. We present that particular case in the next Corollary.
\begin{cor}
\label{thm:CorUnWeighted}Let $\lambda\geq1$ and $p_{0},p_{1}\in(\lambda,\infty)$.
Let $T$ be a linear or a sublinear bounded on $L^{p_{0}}(w_{0})$
for every $w_{0}\in A_{p_{0}/\lambda}^{+}$ with the operator norm
dominated by some increasing function of $[w_{0}]_{A_{p_{0/\lambda}}^{+}}$.
Suppose in addition that $T$ is compact on $L^{p_{1}}$. Then $T$
is compact on $L^{p}(w)$ for all $p\in(\lambda,\infty)$ and all
$w\in A_{p/\lambda}^{+}$.
\end{cor}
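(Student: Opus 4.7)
The plan is to deduce this corollary as an immediate special case of Corollary \ref{thm:CorAp} by specializing the weight $w_{1}$ to the constant function $1$. The key observation is that the constant weight lies in every Muckenhoupt class: for any $q>1$ one has
\[
\sup_{Q}\left(\frac{1}{|Q|}\int_{Q}1\right)\left(\frac{1}{|Q|}\int_{Q}1\right)^{q-1}=1,
\]
so $1\in A_{q}$ with $[1]_{A_{q}}=1$. Since $p_{1}>\lambda$ gives $p_{1}/\lambda>1$, this yields $1\in A_{p_{1}/\lambda}$.

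With this in hand, the hypothesis that $T$ is compact on the unweighted space $L^{p_{1}}$ is precisely the assertion that $T$ is compact on $L^{p_{1}}(w_{1})$ for the choice $w_{1}\equiv 1\in A_{p_{1}/\lambda}$. The boundedness hypothesis on $L^{p_{0}}(w_{0})$ for every $w_{0}\in A_{p_{0}/\lambda}^{+}$ is unchanged, so Corollary \ref{thm:CorAp} applies directly and delivers the compactness of $T$ on $L^{p}(w)$ for every $p\in(\lambda,\infty)$ and every $w\in A_{p/\lambda}^{+}$.

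Since the reduction is a single step of specialization, there is no genuine obstacle in this proof; all the analytic content has already been absorbed into Theorem \ref{thm:CompactExtrapolationAp} (together with the inclusion $A_{p_{1}/\lambda}\subset A_{p_{1}/\lambda}^{-}$ used to pass from that theorem to Corollary \ref{thm:CorAp}). The sole task here is to verify the trivial membership $1\in A_{p_{1}/\lambda}$ and invoke the previous corollary.
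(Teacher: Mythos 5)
Your proof is correct and matches the paper's intent: the paper explicitly introduces this corollary as the particular case $w_{1}\equiv1$ of Corollary \ref{thm:CorAp}, and your verification that $1\in A_{p_{1}/\lambda}$ with constant $1$ is exactly the (trivial) step needed to invoke that corollary.
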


Our next result is a one sided counterpart of the corresponding off-diagonal
result in \cite{HL}. First we recall that given $1<p\leq q<\infty$,
$w\in A_{p,q}^{+}$ if 
\[
[w]_{A_{p,q}^{+}}=\sup_{a<b<c}\left(\frac{1}{c-a}\int_{a}^{b}w^{q}\right)^{\frac{1}{q}}\left(\frac{1}{c-a}\int_{b}^{c}w^{-p'}\right)^{\frac{1}{p'}}<\infty
\]
and analogously, $w\in A_{p,q}^{-}$ if
\[
[w]_{A_{p,q}^{-}}=\sup_{a<b<c}\left(\frac{1}{c-a}\int_{b}^{c}w^{q}\right)^{\frac{1}{q}}\left(\frac{1}{c-a}\int_{a}^{b}w^{-p'}\right)^{\frac{1}{p'}}<\infty.
\]

\begin{thm}
\label{thm:CompactExtrapolationApq}Let $T$ be a linear operator
defined and bounded from $L^{p_{0}}(w_{0}^{p_{0}})$ to $L^{q_{0}}(w_{0}^{q_{0}})$
for some $1<p_{0}\leq q_{0}<\infty$ and every $w_{0}\in A_{p_{0},q_{0}}^{+}$
with the operator norm dominated by some increasing function of $[w_{0}]_{A_{p_{0,}q_{0}}^{+}}$.
Assume additionally that $T$ is compact from $L^{p_{1}}(w_{1}^{p_{1}})$
to $L^{q_{1}}(w_{1}^{q_{1}})$ with for some $w_{1}\in A_{p_{1},q_{1}}^{-}$
where $1<p_{1}\leq q_{1}<\infty$ with $\frac{1}{p_{0}}-\frac{1}{q}_{0}=\frac{1}{p_{1}}-\frac{1}{q_{1}}$.
Then $T$ is compact from $L^{p}(w^{p})$ to $L^{q}(w^{q})$ for every
$1<p\leq q<\infty$ with $\frac{1}{p_{0}}-\frac{1}{q_{0}}=\frac{1}{p}-\frac{1}{q}$
and all $w\in A_{p,q}^{+}$.
\end{thm}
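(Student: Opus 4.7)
The plan is to follow the two-step template of Hyt\"onen and Lappas \cite{HL}, now adapted to the one-sided off-diagonal setting: first I would extrapolate the boundedness hypothesis from the single pair $(p_{0},q_{0})$ to the full family of exponents and weights in $A_{p,q}^{+}$, and then I would interpolate the given compact endpoint against one of the $A^{+}$ boundedness endpoints so produced, using a weighted Krasnosel'skii-type interpolation-of-compactness theorem.

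For the first step I would invoke an off-diagonal one-sided Rubio de Francia extrapolation theorem for the classes $A_{p,q}^{+}$. This can either be quoted from the one-sided literature or reconstructed from the diagonal one-sided extrapolation by applying it to the sublinear map $f\mapsto|Tf|$ on suitably rescaled data. The upshot is that the quantitative hypothesis of $T$ at $(p_{0},q_{0})$ upgrades to the statement that $T$ is bounded from $L^{r}(v^{r})$ to $L^{s}(v^{s})$ for every $1<r\leq s<\infty$ and every $v\in A_{r,s}^{+}$, with operator norm controlled by an increasing function of $[v]_{A_{r,s}^{+}}$.

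For the second step, given a target $(p,q,w)$ with $w\in A_{p,q}^{+}$, I would select an interpolation parameter $\theta\in(0,1)$ close to $1$, an auxiliary pair $(p_{2},q_{2})$ with $1/p=(1-\theta)/p_{1}+\theta/p_{2}$ and $1/q=(1-\theta)/q_{1}+\theta/q_{2}$, and an auxiliary weight $v$ satisfying $w=w_{1}^{1-\theta}v^{\theta}$ in the form dictated by the Stein--Weiss change-of-measure relations. The key algebraic observation is that $w_{1}\in A_{p_{1},q_{1}}^{-}$ controls $w_{1}^{-p_{1}'}$ on left intervals and $w_{1}^{q_{1}}$ on right intervals, while the target $w\in A_{p,q}^{+}$ controls $w^{q}$ on left intervals and $w^{-p'}$ on right intervals; a careful bookkeeping of these averages shows that for $\theta$ sufficiently close to $1$ the auxiliary weight $v=(w/w_{1}^{1-\theta})^{1/\theta}$ lies in $A_{p_{2},q_{2}}^{+}$, so that the first step supplies boundedness of $T$ from $L^{p_{2}}(v^{p_{2}})$ to $L^{q_{2}}(v^{q_{2}})$. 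The asymmetry between the $A^{+}$ conclusion and the $A^{-}$ compact hypothesis is precisely what makes this bookkeeping feasible and explains the appearance of $A^{-}$ rather than $A^{+}$ in the statement.

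With both endpoints in hand, I would conclude by a Krasnosel'skii-type interpolation of compactness with change of measure, of the kind used in \cite{HL} and originating in work of Cwikel and of Cobos--K\"uhn--Schonbek, applied to the compact endpoint $T\colon L^{p_{1}}(w_{1}^{p_{1}})\to L^{q_{1}}(w_{1}^{q_{1}})$ and the bounded endpoint $T\colon L^{p_{2}}(v^{p_{2}})\to L^{q_{2}}(v^{q_{2}})$; this yields the required compactness of $T$ from $L^{p}(w^{p})$ to $L^{q}(w^{q})$. The main obstacle I expect is the weight-selection step, namely making the algebra of one-sided Muckenhoupt characteristics work uniformly as $(p,q,w)$ ranges over the entire target region, and possibly iterating the interpolation (or combining it with a scaling or duality argument) so as to reach pairs $(p,q)$ far from $(p_{1},q_{1})$. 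A secondary technical issue is to confirm that the compactness-interpolation theorem applies verbatim to off-diagonal weighted Lebesgue spaces with non-trivial change of measure on both the domain and target side.
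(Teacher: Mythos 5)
Your proposal follows essentially the same strategy as the paper: first extrapolate the $A_{p,q}^{+}$ boundedness via the off-diagonal one-sided Rubio de Francia theorem (Theorem~\ref{thm:Extrapolationpq}), then realize $L^{p}(w^{p})\to L^{q}(w^{q})$ as a Stein--Weiss/Calder\'on complex interpolation couple between the compact $A_{p_{1},q_{1}}^{-}$ endpoint and a freely chosen nearby $A_{p_{0},q_{0}}^{+}$ bounded endpoint, and transfer compactness by the Cwikel--Kalton theorem (Theorem~\ref{thm:CwKa}, side condition 4, verified by Lemma~\ref{lem:Cond4}). You correctly pinpoint the key weight-selection observation, namely that the $A^{+}$/$A^{-}$ asymmetry makes the one-sided reverse H\"older inequalities on left and right half-intervals match the two factors in the $A_{p_{0},q_{0}}^{+}$ characteristic of $w_{0}$ (this is Lemma~\ref{lem:Keypq} together with the gap lemma~\ref{lem:Gap}); the only minor deviations are cosmetic --- your $\theta$ near $1$ is the paper's $\theta$ near $0$, and no iteration or duality is needed, since the free auxiliary endpoint can always be placed arbitrarily close to the target.
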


As in the case above, since $A_{p_{1},q_{1}}\subset A_{p_{1},q_{1}}^{-}$
we have the following Corollary.
\begin{cor}
\label{thm:CorApq}Let $T$ be a linear operator defined and bounded
from $L^{p_{0}}(w_{0}^{p_{0}})$ to $L^{q_{0}}(w^{q_{0}})$ for some
$1<p_{0}\leq q_{0}<\infty$ and every $w_{0}\in A_{p_{0}}^{+}$ with
the operator norm dominated by some increasing function of $[w_{0}]_{A_{p_{0,}q_{0}}^{+}}$.
Assume additionally that $T$ is compact from $L^{p_{1}}(w_{1}^{p_{1}})$
to $L^{q_{1}}(w_{0}^{q_{1}})$ for some $w_{1}\in A_{p_{1},q_{1}}$
where $1<p_{1}\leq q_{1}<\infty$ with $\frac{1}{p_{0}}-\frac{1}{q}_{0}=\frac{1}{p_{1}}-\frac{1}{q_{1}}$.
Then $T$ is compact from $L^{p}(w^{p})$ to $L^{q}(w^{q})$ for every
$1<p\leq q<\infty$ with $\frac{1}{p_{0}}-\frac{1}{q}_{0}=\frac{1}{p}-\frac{1}{q}$
and all $w\in A_{p,q}^{+}$.
\end{cor}

Since in most situations the most interesting choice for $w_{1}$
is $w_{1}\equiv1$, we record that case in the following Corollary.
\begin{cor}
\label{thm:CorUnWeightedApq}Let $T$ be a linear operator defined
and bounded from $L^{p_{0}}(w_{0}^{p_{0}})$ to $L^{q_{0}}(w^{q_{0}})$
for some $1<p_{0}\leq q_{0}<\infty$ and every $w_{0}\in A_{p_{0},q_{0}}^{+}$
with the operator norm dominated by some increasing function of $[w_{0}]_{A_{p_{0,}q_{0}}^{+}}$.
Assume additionally that $T$ is compact from $L^{p_{1}}$ to $L^{q_{1}}$
where $1<p_{1}\leq q_{1}<\infty$ with $\frac{1}{p_{0}}-\frac{1}{q}_{0}=\frac{1}{p_{1}}-\frac{1}{q_{1}}$.
Then $T$ is compact from $L^{p}(w^{p})$ to $L^{q}(w^{q})$ for every
$1<p\leq q<\infty$ with $\frac{1}{p_{0}}-\frac{1}{q}_{0}=\frac{1}{p}-\frac{1}{q}$
and all $w\in A_{p,q}^{+}$.
\end{cor}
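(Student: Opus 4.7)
The plan is to derive this corollary as an immediate specialization of the previous Corollary~\ref{thm:CorApq}. The key observation is that the constant weight $w_1\equiv 1$ belongs to the (two-sided) Muckenhoupt class $A_{p_1,q_1}$ for every $1<p_1\leq q_1<\infty$: indeed, for $w\equiv 1$ both averages $\bigl(\frac{1}{|I|}\int_I w^{q_1}\bigr)^{1/q_1}$ and $\bigl(\frac{1}{|I|}\int_I w^{-p_1'}\bigr)^{1/p_1'}$ are identically $1$, so $[1]_{A_{p_1,q_1}}=1$. In particular $L^{p_1}=L^{p_1}(w_1^{p_1})$ and $L^{q_1}=L^{q_1}(w_1^{q_1})$ coincide isometrically with the weighted spaces in the statement of Corollary~\ref{thm:CorApq}.

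With this identification, the hypothesis ``$T$ is compact from $L^{p_1}$ to $L^{q_1}$'' becomes exactly the hypothesis of Corollary~\ref{thm:CorApq} for the weight $w_1\equiv 1$, while the $A_{p_0,q_0}^{+}$ boundedness hypothesis is identical in both statements. Applying Corollary~\ref{thm:CorApq} then yields the compactness of $T$ from $L^{p}(w^{p})$ to $L^{q}(w^{q})$ for every $1<p\leq q<\infty$ and every $w\in A_{p,q}^{+}$. There is no real obstacle here; the only (trivial) check is the membership $1\in A_{p_1,q_1}$, and the entire content of the statement is already encoded in the preceding corollary.
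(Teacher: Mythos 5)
Your proof is correct and matches the paper's (implicit) argument: the corollary is simply the specialization $w_{1}\equiv 1$ of Corollary~\ref{thm:CorApq}, justified by the trivial observation that $[1]_{A_{p_{1},q_{1}}}=1$ so that $1\in A_{p_{1},q_{1}}$ and $L^{p_{1}}(1^{p_{1}})=L^{p_{1}}$, $L^{q_{1}}(1^{q_{1}})=L^{q_{1}}$.
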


The remainder of the paper is organized as follows. Section \ref{sec:ProofsMR}
is devoted to the proofs of the main results and in Section \ref{sec:Applications}
we gather the applications of these main results. There we provide
results on commutators, and singular integrals. In particular we provide
a result for certain $L^{r}$-Hörmander singular integrals that seems
to be new even in the classical setting, and is related to \cite{unW5}.

\section{Proofs of the main results\label{sec:ProofsMR} }

In this section we settle Theorems \ref{thm:CompactExtrapolationAp}
and \ref{thm:CompactExtrapolationApq}. We will follow the strategy
in \cite{HL}. In order to provide our proofs we need some preliminaries
that are contained in the following subsection.

\subsection{Preliminaries}

\subsubsection{Extrapolation and interpolation results}

We begin recalling two extrapolation results that will be needed in
the proof of the main theorems. The first of them can be obtained
from minor modifications of \cite[Theorem 3.25]{CUMPBook}
\begin{thm}
\label{thm:Extrapolation}Let $\lambda\geq1$ and $p_{0}\in(\lambda,\infty)$.
Let $T$ be a linear or a sublinear bounded on $L^{p_{0}}(w_{0})$
for every $w\in A_{p_{0}/\lambda}^{+}$ with operator norm bounded
by some increasing function of $[w]_{A_{p_{0}/\lambda}^{+}}$. Then
$T$ is bounded on $L^{p}(w)$ for all $p\in(\lambda,\infty)$ and
every $w\in A_{p/\lambda}^{+}$.
\end{thm}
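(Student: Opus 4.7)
The plan is to follow the standard proof of the Rubio de Francia extrapolation theorem \cite[Theorem 3.25]{CUMPBook}, replacing the Hardy--Littlewood maximal operator by its one-sided analogs $M^{+}$ and $M^{-}$ and the two-sided Muckenhoupt classes by their one-sided counterparts $A_{p}^{\pm}$. The machinery translates essentially verbatim once the following one-sided ingredients are available: (i) Sawyer's boundedness theorem, which states that $M^{\pm}$ is bounded on $L^{s}(\sigma)$ exactly when $\sigma\in A_{s}^{\pm}$; (ii) the one-sided Jones factorization, which asserts that every $w\in A_{q}^{+}$ with $1<q<\infty$ can be written as $w=u_{1}u_{2}^{1-q}$ with $u_{1}\in A_{1}^{+}$ and $u_{2}\in A_{1}^{-}$; and (iii) the one-sided Rubio de Francia iteration, which when driven by $M^{\pm}$ on $L^{s}(\sigma)$ produces an $A_{1}^{\pm}$ iterate dominating the input while at most doubling its $L^{s}(\sigma)$ norm.

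Fix $p\in(\lambda,\infty)$ and $w\in A_{p/\lambda}^{+}$, and set $r=p/\lambda$ and $r_{0}=p_{0}/\lambda$. If $r=r_{0}$ there is nothing to prove. As in the classical argument, the case distinction is $r<r_{0}$ versus $r>r_{0}$, and the parameter $\lambda>1$ is absorbed by raising the target inequality to the $\lambda$-th power and reasoning with the shifted exponents $r$ and $r_{0}$ in place of $p$ and $p_{0}$.

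In the case $r<r_{0}$, I would factor $w=u_{1}u_{2}^{1-r}$ with $u_{1}\in A_{1}^{+}$ and $u_{2}\in A_{1}^{-}$, and apply the $M^{-}$-based Rubio de Francia iteration on an appropriately weighted $L^{s}$-space to produce an $A_{1}^{-}$ function $h$ dominating a prescribed auxiliary density. Combining $u_{1}$ and $h$ through the exponent arithmetic of the classical proof then yields a weight $\tilde{w}\in A_{r_{0}}^{+}$ such that
\[
\|Tf\|_{L^{p}(w)}\lesssim\|Tf\|_{L^{p_{0}}(\tilde{w})},
\]
and the hypothesis on $L^{p_{0}}(\tilde{w})$, together with a final H\"older estimate, closes the argument. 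In the case $r>r_{0}$, I would instead dualize $L^{p}(w)$ against a conjugate weight lying in $A_{r'}^{-}$ and run the Rubio de Francia iteration driven by $M^{+}$; this produces an $A_{1}^{+}$ iterate, and the factorization again yields a weight in $A_{r_{0}}^{+}$ to which the hypothesis applies.

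The main obstacle I anticipate is purely directional bookkeeping: each invocation of Sawyer's theorem, of the Jones factorization, and of the Rubio de Francia algorithm carries a $+$ or $-$ choice, and these have to be aligned so that the weight produced at the end lands in $A_{r_{0}}^{+}$, which is the class in which the hypothesis is available. Because the authors describe the deduction as a ``minor modification'' of \cite[Theorem 3.25]{CUMPBook}, I expect no genuinely new analytic difficulty beyond this directional alignment.
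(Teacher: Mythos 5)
Your roadmap is consistent with the paper's own treatment, which consists of a single remark that the result "can be obtained from minor modifications of \cite[Theorem 3.25]{CUMPBook}" and supplies no details. You correctly identify the one-sided substitutes that make the classical argument go through: Sawyer's theorem characterizing boundedness of $M^{\pm}$ on $L^{s}(\sigma)$ by $\sigma\in A_{s}^{\pm}$, the Rubio de Francia iteration driven by $M^{\pm}$ producing $A_{1}^{\mp}$ majorants, and the product structure $A_{1}^{+}\cdot(A_{1}^{-})^{1-q}\subset A_{q}^{+}$ (together with the duality $w\in A_{p}^{+}\Leftrightarrow w^{1-p'}\in A_{p'}^{-}$), and you flag the directional bookkeeping as the only real issue, which is accurate.

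One small remark: the modern proof of \cite[Theorem 3.25]{CUMPBook} that the paper points to does not actually invoke the "hard" direction of the Jones factorization (factoring the given $A_{p}^{+}$ weight); it only needs the easy inclusion that products of appropriately oriented $A_{1}^{\pm}$ weights land in $A_{p}^{+}$, the rest being handled by H\"older and the iteration. The full one-sided Jones factorization that you cite is a correct theorem (proved by the first author of this very paper) and using it gives an alternative, slightly heavier route, so your plan is not wrong — it just carries an unnecessary ingredient. Either version closes the argument, and neither reveals a gap relative to what the paper claims.
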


The other extrapolation result we will rely upon is the following.
\begin{thm}[\cite{MR}]
\label{thm:Extrapolationpq}Let $T$ be a linear operator defined
and bounded from $L^{p_{0}}(w_{0}^{p_{0}})$ to $L^{q_{0}}(w^{q_{0}})$
for some $1<p_{0}\leq q_{0}<\infty$ and every $w_{0}\in A_{p_{0},q_{0}}^{+}$
with operator norm bounded by some increasing function of $[w]_{A_{p_{0},q_{0}}^{+}}$.
Then $T$ is bounded from $L^{p}(w^{p})$ to $L^{q}(w^{q})$ for every
$1<p\leq q<\infty$ such that $\frac{1}{p_{0}}-\frac{1}{q}_{0}=\frac{1}{p}-\frac{1}{q}$,
and all $w\in A_{p,q}^{+}$.
\end{thm}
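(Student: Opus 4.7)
The plan is to adapt the classical off-diagonal Rubio de Francia extrapolation argument to the one-sided setting, with the Hardy–Littlewood maximal function replaced by the one-sided maximal operators $M^{\pm}$ and the $A_s$ weights by their one-sided counterparts $A_s^{\pm}$. Fix $1<p\le q<\infty$ and $w\in A_{p,q}^{+}$; we want $\|Tf\|_{L^{q}(w^{q})}\lesssim \|f\|_{L^{p}(w^{p})}$.

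The first step is a duality reduction. Since $q\ge 1$, $\|Tf\|_{L^{q}(w^{q})}=\|(Tf)\,w\|_{L^{q}}$, and by the usual $L^{q}$–$L^{q'}$ duality this is
\[
\|Tf\|_{L^{q}(w^{q})}=\sup_{h\ge 0,\ \|h\|_{L^{q'}}\le 1}\int |Tf|\,w\,h.
\]
The second step is to build, for a given admissible $h$, a one-sided Rubio de Francia majorant. Because the target weight class is $A_{p,q}^{+}$ (which, via the change $b\mapsto c-(b-a)$, corresponds to boundedness of $M^{-}$ on suitable spaces), the correct choice is to iterate $M^{-}$: set
\[
Rh=\sum_{k=0}^{\infty}\frac{(M^{-})^{k}h}{(2\|M^{-}\|)^{k}},
\]
where $\|M^{-}\|$ denotes the norm of $M^{-}$ on the dual-type space chosen so that $H:=Rh$ satisfies $h\le H$, $\|H\|\lesssim 1$, and $H\in A_{1}^{-}$ with a uniform constant.

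The third, and technically most delicate, step is to build from $H$ and $w$ an auxiliary weight $W\in A_{p_{0},q_{0}}^{+}$ to which one can apply the hypothesis for $T$. One uses the factorization characterization of $A_{p,q}^{+}$ (any such $w$ can be written, up to powers, as a product of an $A_{1}^{+}$ weight and an $A_{1}^{-}$ weight raised to the appropriate negative exponent), together with the analogous characterization of $A_{p_{0},q_{0}}^{+}$, to paste $H$ and $w$ into a single weight lying in the right class. With the correct exponents dictated by matching $p,q$ to $p_{0},q_{0}$, Hölder's inequality then lets one estimate
\[
\int |Tf|\,w\,h\le \int |Tf|\,w\,H \le \|Tf\|_{L^{q_{0}}(W^{q_{0}})}\,\|\,\cdots\,\|,
\]
where the last factor is controlled by $\|f\|_{L^{p}(w^{p})}$ after applying the assumed $L^{p_{0}}(W^{p_{0}})\to L^{q_{0}}(W^{q_{0}})$ bound for $T$ (whose operator norm is controlled via the increasing function of $[W]_{A_{p_{0},q_{0}}^{+}}$, itself bounded by the constants of $H$ and $w$).

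The main obstacle is the bookkeeping in step three: one must check that the correct combination of $H\in A_{1}^{-}$ with $w\in A_{p,q}^{+}$ genuinely lands in $A_{p_{0},q_{0}}^{+}$ (and not $A_{p_{0},q_{0}}^{-}$), and that the exponents balance so that the final Hölder step yields $\|f\|_{L^{p}(w^{p})}$ on the right-hand side. The orientation issue, i.e.\ why iterating $M^{-}$ of the dual variable produces a weight compatible with the forward class $A_{p_{0},q_{0}}^{+}$, is exactly the one-sided analogue of the fact that in the two-sided case the same algorithm works because $A_{s}$ is stable under the maximal function acting on either side; here the asymmetry forces the $-$/$+$ pairing and is the reason the statement is not symmetric in the sense an unwary reader might expect.
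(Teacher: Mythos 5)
The paper does not prove this theorem; it imports it from the reference cited in the statement, so there is no in-paper argument to compare against. Your sketch points in the right direction: Rubio de Francia iteration adapted to the one-sided setting is indeed how such extrapolation results are obtained, and the observation that iterating $M^{-}$ on the dual variable is the compatible choice for the class $A_{p,q}^{+}$ is correct, as it matches the equivalence $w\in A^{+}_{p,q}\iff w^{-p'}\in A^{-}_{1+p'/q}$ of Proposition \ref{prop:Apq}.

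However, the proposal has a decisive gap exactly where you flag it: step three, pasting $H\in A_1^{-}$ and $w\in A_{p,q}^{+}$ into a weight $W\in A_{p_0,q_0}^{+}$ to which the hypothesis applies, is not bookkeeping; it \emph{is} the theorem. Leaving the exponents unspecified and the verification that $W$ actually lies in $A_{p_0,q_0}^{+}$ unaddressed means no proof has been given. Moreover, a single Rubio de Francia iteration of the kind you set up can only extrapolate in one direction relative to $p_0$; the other direction requires applying the argument to the adjoint $T^{*}$, which is bounded with weights in the mirror class $A_{q_0',p_0'}^{-}$ (via $w\in A^{+}_{p,q}\iff w^{-1}\in A^{-}_{q',p'}$), and this case split never appears in your plan. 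Finally, off-diagonal Rubio de Francia extrapolation from a single pair $(p_0,q_0)$ only reaches pairs $(p,q)$ satisfying $\tfrac{1}{p}-\tfrac{1}{q}=\tfrac{1}{p_0}-\tfrac{1}{q_0}$; this constraint is implicit in the theorem and respected by the applications in Section \ref{sec:Applications}, but your proposal never tracks it, and without it the exponent bookkeeping in the Rubio de Francia step cannot close.
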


Another fundamental tool for our purposes is the following abstract
theorem due to Cwikel and Kalton \cite{CK}. Here we state the version
contained in \cite{HL}.
\begin{thm}
\label{thm:CwKa} Let $(X_{0},X_{1})$ and $(Y_{0},Y_{1})$ be Banach
couples and let $T$ be a linear operator such that $T:X_{0}+X_{1}\to Y_{0}+Y_{1}$
and $T:X_{j}\to Y_{j}$ boundedly for $j=0,1$. Suppose moreover that
$T:X_{1}\to Y_{1}$ is compact. Let $[\ ,\ ]_{\theta}$ be the complex
interpolation functor of Calderón. Then also $T:[X_{0},X_{1}]_{\theta}\to[Y_{0},Y_{1}]_{\theta}$
is compact for $\theta\in(0,1)$ under any of the following four side
conditions: 
\begin{enumerate}
\item $X_{1}$ has the UMD (unconditional martingale differences) property, 
\item $X_{1}$ is reflexive, and $X_{1}=[X_{0},E]_{\alpha}$ for some Banach
space $E$ and $\alpha\in(0,1)$, 
\item $Y_{1}=[Y_{0},F]_{\beta}$ for some Banach space $F$ and $\beta\in(0,1)$, 
\item $X_{0}$ and $X_{1}$ are both complexified Banach lattices of measurable
functions on a common measure space. 
\end{enumerate}
\end{thm}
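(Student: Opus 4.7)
The plan is to combine Calder\'on's holomorphic description of complex interpolation with a compactness-by-approximation argument, and then dispatch each of the four side conditions separately. Recall that every $x \in [X_0, X_1]_\theta$ admits a representation $x = f(\theta)$ with $f$ bounded and continuous on the closed strip $\overline{S} = \{0 \le \Re z \le 1\}$, holomorphic on $S$, and with $f(j+it) \in X_j$ for $j=0,1$; the norm of $x$ is the infimum over all such $f$ of $\max_j \sup_t \|f(j+it)\|_{X_j}$. Applying $T$ pointwise to $f$ produces a function $Tf$ with the analogous properties relative to $(Y_0, Y_1)$, so standard interpolation already gives that $T:[X_0,X_1]_\theta \to [Y_0,Y_1]_\theta$ is bounded. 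The task is to upgrade boundedness to compactness.

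The guiding strategy is the classical one for compactness interpolation: for every $\varepsilon > 0$, produce a \emph{compact} operator $T_\varepsilon : X_0 + X_1 \to Y_0 + Y_1$ such that $T_\varepsilon$ is bounded $X_0 \to Y_0$ with norm uniform in $\varepsilon$, and $\|T - T_\varepsilon\|_{X_1 \to Y_1} < \varepsilon$. Interpolating the residual then gives $\|T - T_\varepsilon\|_{[X_0,X_1]_\theta \to [Y_0,Y_1]_\theta} \le C \varepsilon^{\theta}$; since compact operators form a closed ideal in $\mathcal{L}([X_0,X_1]_\theta, [Y_0,Y_1]_\theta)$, the operator $T$ itself is compact on the interpolation spaces. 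The raw material for $T_\varepsilon$ comes from compactness of $T:X_1 \to Y_1$, which yields a finite $\varepsilon$-net for $T(B_{X_1})$ and hence a finite-rank-like approximation $T_\varepsilon$ taking values in the linear span of that net. The obstacle is that this $T_\varepsilon$ is \emph{a priori} only defined on $X_1$, so one must extend it to $X_0 + X_1$ in such a way that it is also bounded $X_0 \to Y_0$; this is exactly where the four side conditions enter.

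Under condition (4) (complexified Banach lattices), I would use the Calder\'on product identification $[X_0, X_1]_\theta = X_0^{1-\theta} X_1^\theta$ and realise $T_\varepsilon$ as a truncation/multiplication operator built from the lattice structure, so that the required two-sided boundedness reduces to a pointwise H\"older-type inequality. Under (2) or (3) (one endpoint is itself an intermediate space), I would invoke the reiteration theorem for the complex method, which allows the approximation penalty to be redistributed across an extra interpolation scale so that a suitable compact $T_\varepsilon$ can be manufactured from the hypothesis. Under (1) (UMD on $X_1$), the decisive input is that the UMD property underwrites the boundedness of the vector-valued Poisson-type singular integrals appearing when one recovers a holomorphic operator-valued family from its boundary values; this permits the construction of a holomorphic family of compact operators whose value at $\theta$ is the desired $T_\varepsilon$. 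The main obstacle in all four cases is the same, and is the reason this result is nontrivial: producing a single compact $T_\varepsilon$ that is simultaneously well-controlled on both endpoint couples and approximates $T$ in $\mathcal{L}(X_1, Y_1)$, \emph{without} assuming any approximation property on $Y_1$. The four side conditions are precisely the four mechanisms for sidestepping this failure, and I expect (1) to be the most delicate to verify, since it relies on genuine harmonic analysis in UMD spaces rather than on a direct algebraic or order-theoretic construction.
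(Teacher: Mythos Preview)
The paper does not prove this theorem at all: Theorem~\ref{thm:CwKa} is quoted as a black box from Cwikel and Kalton (in the formulation recorded by Hyt\"onen and Lappas), and is used only as an input to the proofs of Theorems~\ref{thm:CompactExtrapolationAp} and~\ref{thm:CompactExtrapolationApq}. There is therefore no ``paper's own proof'' to compare your proposal against.

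As for the proposal itself, what you wrote is a reasonable high-level narrative of the Cwikel--Kalton strategy, but it is not a proof. The approximation scheme you describe (produce a compact $T_\varepsilon$ with uniform $X_0\to Y_0$ bound and small $X_1\to Y_1$ defect, then interpolate) is indeed the backbone, but the substance of Cwikel--Kalton lies entirely in how one manufactures such a $T_\varepsilon$ that acts on \emph{all} of $X_0+X_1$, and your treatment of the four cases remains at the level of slogans rather than constructions. For instance, in case~(4) the actual argument is not a ``truncation/multiplication operator built from the lattice structure'' applied to $T$; one works with the Calder\'on lattice product and a concrete pointwise decomposition of elements of $X_0^{1-\theta}X_1^{\theta}$, and the compactness is obtained via a different mechanism than the one you sketch. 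Similarly, in case~(1) the UMD hypothesis is used through specific operator-valued transference/projection arguments, not merely ``Poisson-type singular integrals'' in the vague sense. If you intend to supply a genuine proof rather than cite the result, each of the four cases requires a self-contained argument of several pages; what you have now is an outline of intentions, not a proof.
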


For the UMD property we remit the interested reader to \cite[Chapter 4]{HvNVW}.

In our case, since we are dealing with weighted $L^{p}$ spaces the
following Lemma will serve our purposes.
\begin{lem}
\label{lem:Cond4} If $p_{j}\in[1,\infty)$ and $w_{j}$ are weights,
then the spaces $X_{j}=L^{p_{j}}(w_{j})$ satisfy the condition 4
of Theorem \ref{thm:CwKa}. 
\end{lem}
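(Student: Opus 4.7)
The plan is to verify condition 4 essentially by inspection, once the correct common measurable space is identified. The only subtle point is that the underlying measures $w_0\,dx$ and $w_1\,dx$ need not coincide, but the condition in Theorem \ref{thm:CwKa}(4) asks for a common measure space on which both $X_0$ and $X_1$ are Banach lattices of measurable functions, not for the norms to be defined by the same measure.

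Concretely, I would fix the ambient measurable space to be $\mathbb{R}^n$ (or $\mathbb{R}$ in our one-sided setting) equipped with the Borel or Lebesgue $\sigma$-algebra and Lebesgue measure $dx$. Since each weight $w_j$ is a non-negative locally integrable function that is strictly positive a.e.\ (as required for $w_j\in A_{p_j/\lambda}^{+}$ or $w_j\in A_{p_j/\lambda}^{-}$), the measures $w_j\,dx$ are mutually absolutely continuous with $dx$. Consequently, a function is $dx$-measurable iff it is $w_j\,dx$-measurable, and ``equality a.e.'' is the same notion for all three measures. Thus the elements of $L^{p_j}(w_j)$ can be regarded as (equivalence classes of) Lebesgue-measurable functions on the common space $(\mathbb{R}^n,dx)$.

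Next I would verify the lattice axioms. If $f,g$ are measurable and $|f|\leq|g|$ a.e., then $|f|^{p_j}w_j\leq |g|^{p_j}w_j$ a.e., hence $\|f\|_{L^{p_j}(w_j)}\leq\|g\|_{L^{p_j}(w_j)}$; this is the K\"othe lattice property. That $L^{p_j}(w_j)$ is complete and thus a (real) Banach lattice is classical, and the complexification is the standard one, namely allowing complex-valued functions with norm $\||f|\|_{L^{p_j}(w_j)}$, which is again a Banach lattice over $\mathbb{C}$.

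The main obstacle---if one can call it that---is purely expository: one has to be careful to distinguish between the measure determining the norm (namely $w_j\,dx$, which differs for $j=0$ and $j=1$) and the underlying measurable space on which the functions live (which is common, and is all that Theorem \ref{thm:CwKa}(4) requires). Once this point is made, the verification reduces to the observations above.
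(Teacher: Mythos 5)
Your proof is correct and takes essentially the same approach as the paper, whose entire argument is the single sentence that the $L^{p_j}(w_j)$ are ``clearly'' complexified Banach lattices of measurable functions on the common measure space $\mathbb{R}$; you are simply spelling out the verification (common underlying measurable space, K\"othe lattice property, standard complexification) that the paper leaves implicit.
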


\begin{proof}
Clearly, $X_{j}=L^{p_{j}}(w_{j})$, $j=0,1$, are complexified Banach
lattices of measurable functions on the common measure space $\mathbb{R}$
and hence we are done.
\end{proof}

\subsubsection{Facts on one-sided $A_{p}$ and $A_{p,q}$ weights}

We need some facts on one-sided $A_{p}$ weights. Our first result
is the following reverse Hölder inequality which was obtained in \cite{CUNO}.
However we shall use the version contained in \cite{MRdT}. Before
that we recall that $A_{\infty}^{+}=\bigcup_{p\geq1}A_{p}^{+}$ and
that $A_{\infty}^{-}=\bigcup_{p\geq1}A_{p}^{-}$.
\begin{lem}
\label{lem:LemRHI}Assume that $w\in A_{\infty}^{+}$ then, there
exists $r=r_{w}>1$ such that
\[
\left(\frac{1}{b-a}\int_{a}^{b}w^{r}\right)^{\frac{1}{r}}\leq4\frac{1}{c-b}\int_{b}^{c}w
\]
where $a<c$ and $b=\frac{c+a}{2}$. Analogously, if $w\in A_{\infty}^{-}$
then there exists $s=s_{w}>1$ such that
\[
\left(\frac{1}{c-b}\int_{b}^{c}w^{s}\right)^{\frac{1}{s}}\leq4\frac{1}{b-a}\int_{a}^{b}w.
\]
\end{lem}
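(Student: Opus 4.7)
The plan is to adapt the Coifman--Fefferman/Gehring strategy for the classical reverse H\"older inequality to the one-sided setting, as in \cite{CUNO}. Only the $A_p^+$ statement needs argument; the $A_p^-$ case is obtained by reflecting through the origin.

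The first step is to extract from the $A_p^+$ condition a one-sided $A_\infty^+$-type estimate: for every interval $I=(a,c)$ and every measurable $E\subset I^-$,
$$\frac{w(E)}{w(I)}\lesssim\left(\frac{|E|}{|I|}\right)^{\alpha}$$
for some $\alpha>0$ depending only on $p$ and $[w]_{A_p^+}$. This is the one-sided analogue of the standard fact $A_p\subset A_\infty$, but it is more delicate here because $A_p^+$ only compares averages of $w$ to the \emph{left} of a point with averages of $w^{-1/(p-1)}$ to the \emph{right}; consequently the set $E$ must lie in the left half $I^-$, and the argument proceeds by applying the $A_p^+$ inequality at the triple $(a,(a+c)/2,c)$ together with H\"older's inequality in the form $|E|\le w(E)^{1/p}\,\sigma(I^+)^{1/p'}$, where $\sigma=w^{-1/(p-1)}$.

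The second step is a one-sided stopping-time argument inside $I^-$. For $\lambda>\bar w_I:=\frac{1}{|I|}\int_I w$, I would select maximal subintervals on which the average of $w$ exceeds $\lambda$, arranged so that the excess mass sits to the right of each stopping point (consistently with the orientation of $A_p^+$), and iterate. The one-sided $A_\infty^+$ property from the first step guarantees that the level sets inside $I^-$ lose a fixed fraction of their measure at each generation, producing the geometric decay
$$\bigl|\{x\in I^-:w(x)>\lambda\}\bigr|\lesssim(\bar w_I/\lambda)^{\beta}|I|\qquad(\lambda>\bar w_I)$$
for some $\beta>1$ depending on $[w]_{A_p^+}$. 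Integrating the distribution function against $\lambda^{r-1}$ for $r-1>0$ small enough (depending on $\beta$) then yields $\frac{1}{|I^-|}\int_{I^-}w^r\lesssim\bar w_I^r$, which is the required reverse H\"older inequality.

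The principal obstacle is the one-sided $A_\infty^+$ bound in step one: in the symmetric setting this is essentially a one-line H\"older argument, but here the asymmetry of $A_p^+$ forces both the restriction of $E$ to the left half and a careful choice of stopping intervals in step two, so that every application of the $A_p^+$ inequality is made at a triple where $w$ lives on the left and $w^{-1/(p-1)}$ on the right. This bookkeeping is the essential new ingredient absent from the two-sided proof.
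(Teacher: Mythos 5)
The paper does not actually prove this lemma; it is quoted directly from the cited reference \cite{CUNO}, so there is no internal proof to compare against. Your plan follows the natural Coifman--Fefferman/Gehring template, but the H\"older inequality at the heart of your step one is false as stated: for $E\subset I^-$, H\"older gives $|E|=\int_E w^{1/p}w^{-1/p}\le w(E)^{1/p}\sigma(E)^{1/p'}$, and $E\subset I^-$ yields only $\sigma(E)\le\sigma(I^-)$, not $\sigma(I^+)$. The $A_p^+$ condition at the midpoint triple $(a,(a+c)/2,c)$ controls $\sigma(I^+)$ and says nothing about $\sigma(I^-)$, so the chain cannot close. The inequality that \emph{does} follow cleanly by this route is the one for subsets of the \emph{right} half: for $E\subset I^+$ one gets
\[
\left(\frac{|E|}{|I|}\right)^{p}\le[w]_{A_p^+}\,\frac{w(E)}{w(I^-)},
\]
which is an upper bound on the $w$-mass of $I^-$ in terms of $w$-mass to its right. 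That is the wrong orientation for your steps two and three, which require decay of the level sets of $w$ \emph{inside} $I^-$; from the above one recovers only the trivial Chebyshev-type bound $|\{x\in I^-:w>\lambda\}|\lesssim(\bar w_I/\lambda)\,|I|$, and an exponent of $1$ is exactly insufficient to integrate against $\lambda^{r-1}\,d\lambda$ for any $r>1$.

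The failure is structural, not cosmetic: an upper bound on $\sigma(I^-)$ from $A_p^+$ would require applying the defining inequality at a triple $(a',a,b)$ extending to the \emph{left} of $I$, which brings in data outside $I$ and is not available. This is why the one-sided reverse H\"older inequality is not a one-line consequence of $A_p^+$, and why \cite{CUNO} instead passes through the one-sided minimal operator rather than a direct stopping-time argument on the level sets of $w$. You correctly flag the one-sided $A_\infty^+$ bound as the principal obstacle in your closing paragraph, but the H\"older step you propose to overcome it does not hold, so the argument as written has a genuine gap at its base.
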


The following result shows that it suffices to settle $A_{p}$-like
conditions with a gap in order to show that the same condition holds
without a gap.
\begin{lem}
\label{lem:Gap}Let $p,q>1$ and let $t>2$. Assume that $v^{q}$
and $u^{-p}$ are no negative locally integrable functions. If for
every interval $I=(a,b)$ we have that
\begin{equation}
\left(\frac{1}{\frac{l_{I}}{t}}\int_{a}^{a+\frac{l_{I}}{t}}v^{q}\right)^{\frac{1}{q}}\left(\frac{1}{\frac{l_{I}}{t}}\int_{b-\frac{l_{I}}{t}}^{b}u^{-p'}\right)^{\frac{1}{p'}}\leq K,\label{eq:gapCondition}
\end{equation}
where $l_{I}=b-a$, then
\[
\sup_{a<b<c}\left(\frac{v^{q}(a,b)}{c-a}\right)^{\frac{1}{q}}\left(\frac{u^{-p'}(b,c)}{c-a}\right)^{\frac{1}{p'}}\leq K.
\]
\end{lem}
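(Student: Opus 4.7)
The plan is to prove the estimate by constructing, for each triple $a<b<c$ with $d=c-a$, a countable family of intervals $\{I_{k}\}_{k\ge 0}$ to which the hypothesis can be applied, in such a way that the left $l_{I_{k}}/t$-portions of the $I_{k}$ tile $(a,b)$ and the right $l_{I_{k}}/t$-portions tile $(b,c)$. Each application of the hypothesis then produces a per-scale estimate
\[
v^{q}(A_{k})^{1/q}\,u^{-p'}(B_{k})^{1/p'}\le K\,(l_{I_{k}}/t)^{\mu},\qquad \mu=1/q+1/p',
\]
and the target inequality should emerge by summing these and exploiting the telescoping identities $\sum_{k}v^{q}(A_{k})=v^{q}(a,b)$, $\sum_{k}u^{-p'}(B_{k})=u^{-p'}(b,c)$.

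Concretely, in the symmetric case $b=(a+c)/2$ I would take nested intervals $I_{k}=(a_{k},c_{k})$ with $a_{0}=a$, $c_{0}=c$, $a_{k}\nearrow b$, $c_{k}\searrow b$, enforcing $l_{I_{k+1}}=\theta\,l_{I_{k}}$ with $\theta=(t-2)/t\in(0,1)$ and using the symmetric split $a_{k+1}-a_{k}=c_{k}-c_{k+1}=l_{I_{k}}/t$; this makes each piece exactly fill the corresponding $1/t$-portion. For unequal $L=b-a$ and $R=c-b$, one must modify the scheme either by allowing an asymmetric split (and further dyadically subdividing the larger side when $\max(L,R)>d/2$) or by treating the long side with a geometric chain of right $1/t$-portions $(b+(t-2)L_{n},b+(t-1)L_{n})$ with $L_{n}=L\,((t-1)/(t-2))^{n}$, each realized as the right portion of $I=(b-L_{n},b+(t-1)L_{n})$. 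The hypothesis $t>2$ is used here to guarantee enough slack between left and right portions to absorb the asymmetry.

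The combination step would use H\"older's inequality on the sum in $k$ with exponents $q$ and $p'$ (admissible since $p\le q$ forces $1/q+1/p'\le 1$) together with the convergent geometric series $\sum_{k}\theta^{k\mu}$ to turn the per-scale bounds into $v^{q}(a,b)^{1/q}u^{-p'}(b,c)^{1/p'}\le K\,d^{\mu}$. The step I expect to be most delicate is preserving the constant $K$ exactly through the summation, since a direct application of H\"older yields $\sum A_{k}^{1/q}B_{k}^{1/p'}\le(\sum A_{k})^{1/q}(\sum B_{k})^{1/p'}$, which is the wrong direction for this purpose. The cleaner route I would ultimately pursue is a supremum/contradiction argument: set $M=\sup_{a<b<c}F(a,b,c)$ where $F$ denotes the left-hand side of the target inequality divided by $(c-a)^{\mu}$, assume $M>K$, and apply the hypothesis at a carefully chosen $I$ to a near-extremal triple to produce a triple with strictly larger $F$, contradicting the definition of $M$. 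This bypasses the explicit summation and keeps the constant sharp.
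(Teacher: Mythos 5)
The obstacle you identify at the end is the correct diagnosis, but the fix you sketch does not resolve it. Your first scheme produces per-scale bounds of the form $v^{q}(A_{k})^{1/q}u^{-p'}(B_{k})^{1/p'}\leq K(l_{I_{k}}/t)^{1/q+1/p'}$ with the $A_k$ tiling $(a,b)$ and the $B_k$ tiling $(b,c)$, and then one would need
\[
\Bigl(\sum_{k}v^{q}(A_{k})\Bigr)^{1/q}\Bigl(\sum_{k}u^{-p'}(B_{k})\Bigr)^{1/p'}\leq\sum_{k}v^{q}(A_{k})^{1/q}u^{-p'}(B_{k})^{1/p'},
\]
which is exactly the reverse of H\"older and false in general; you note this. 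The proposed alternative via a near-extremal triple and contradiction is only a heuristic: applying the hypothesis to \emph{one} well-chosen interval inside a near-extremal configuration $(a,b,c)$ does not manufacture a strictly larger ratio, because the hypothesis controls a single pair of sub-averages, not the whole configuration, and one cannot recombine them without again hitting the same H\"older obstruction. As written, the argument has a genuine gap.

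The paper's proof avoids the problem by \emph{not} tiling both sides. Fix $x\in(a,c)$ and set $x_{0}=a$, $x_{k+1}-x_{k}=\frac{1}{t}(x-x_{k})$, so $(a,x)=\bigcup_{k}(x_{k},x_{k+1}]$. Apply the hypothesis to the interval $I=(x_{k},x)$: its left $1/t$-portion is $(x_{k},x_{k+1})$ and its right $1/t$-portion is $(y,x)$ with $x-y=x_{k+1}-x_{k}$. This gives $v^{q}(x_{k},x_{k+1})\leq K^{q}(x_{k+1}-x_{k})\bigl(\frac{x-y}{u^{-p'}(y,x)}\bigr)^{q/p'}\leq K^{q}(x_{k+1}-x_{k})\bigl(M_{u^{-p'}}(u^{p'}\chi_{(a,x)})(x)\bigr)^{q/p'}$: the right portions are nested at $x$ and are all absorbed into a single pointwise maximal-function value, so summing in $k$ is a clean telescope $\sum_{k}(x_{k+1}-x_{k})=x-a$ and yields $\frac{v^{q}(a,x)}{x-a}\leq K^{q}\bigl(M_{u^{-p'}}(u^{p'}\chi_{(a,c)})(x)\bigr)^{q/p'}$ for every $x$. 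Hence $(b,c)$ lies in a superlevel set of $M_{u^{-p'}}(u^{p'}\chi_{(a,c)})$, and the weak $(1,1)$ bound (with constant $1$) for this weighted maximal operator gives $u^{-p'}(b,c)\leq\bigl(\frac{K^{q}(c-a)}{v^{q}(a,b)}\bigr)^{p'/q}(c-a)$, which rearranges to the claim. So the decomposition acts only on the $v^{q}$-side, the $u^{-p'}$-side is handled by a maximal-function/weak-type step, and the constant $K$ survives exactly because both the telescoping sum and the weak-type constant are sharp. That mechanism is absent from your proposal and is the missing idea.

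Two minor points: the lemma does not assume $p\leq q$, only $p,q>1$, so your parenthetical justification for $1/q+1/p'\leq1$ is not available in general; and the asymmetric geometric chains you describe for $L\neq R$ are unnecessary once one adopts the paper's one-sided decomposition anchored at $x$.
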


\begin{proof}
It suffices to show that for every interval $(a,c)$
\[
\left(\frac{v^{q}(a,b)}{c-a}\right)^{\frac{1}{q}}\left(\frac{u^{-p'}(b,c)}{c-a}\right)^{\frac{1}{p'}}\leq K.
\]
Let us fix an interval $(a,c)$. Let $x\in(a,c)$. We split $(a,c)$
as follows. We define $x_{0}=a$, $x_{k+1}-x_{k}=\frac{1}{t}(x-x_{k})$.
Then we have that 
\[
(a,x)=\bigcup_{k=0}^{\infty}(x_{k},x_{k+1}].
\]
Now we fix $k$ and let $y<x$ such that $x-y=x_{k+1}-x_{k}$. Hence,
by (\ref{eq:gapCondition}),
\begin{align*}
v^{q}(x_{k},x_{k+1}) & \leq K^{q}(x_{k+1}-x_{k})\left(\frac{x-y}{u^{-p'}(y,x)}\right)^{\frac{q}{p'}}\\
 & \leq K^{q}(x_{k+1}-x_{k})\left(M_{u^{-p'}}^{-}\left(u^{p'}\chi_{(a,x)}\right)(x)\right)^{\frac{q}{p'}}
\end{align*}
where 
\[
M_{\rho}^{-}f(x)=\sup_{h>0}\frac{1}{\rho(x-h,x)}\int_{x-h}^{x}f(y)\rho(y)dy.
\]
Summing in $k$, 
\[
v^{q}(a,x)\leq K^{q}(x-a)\left(M_{u^{-p'}}^{-}\left(u^{p'}\chi_{(a,x)}\right)(x)\right)^{\frac{q}{p'}}
\]
and consequently
\[
\frac{v^{q}(a,x)}{x-a}\leq K^{q}\left(M_{u^{-p'}}^{-}\left(u^{p'}\chi_{(a,x)}\right)(x)\right)^{\frac{q}{p'}}.
\]
Now let $b\in(a,c)$. For every $x\in(b,c)$ we have that 
\begin{align*}
\frac{v^{q}(a,b)}{c-a}\leq\frac{v^{q}(a,x)}{x-a} & \leq K^{q}\left(M_{u^{-p'}}^{-}\left(u^{p'}\chi_{(a,x)}\right)(x)\right)^{\frac{q}{p'}}\\
 & \leq K^{q}\left(M_{u^{-p'}}^{-}\left(u^{p'}\chi_{(a,c)}\right)(x)\right)^{\frac{q}{p'}}.
\end{align*}
Note, that since 
\[
(b,c)\subset\left\{ x\,:\,M_{u^{-p'}}^{-}\left(u^{p'}\chi_{(a,c)}\right)(x)\geq\left(\frac{v^{q}(a,b)}{(c-a)K^{q}}\right)^{\frac{p'}{q}}\right\} ,
\]
by the weak type $(1,1)$ of $M_{u^{-p'}}$ (see \cite{B}) we have
that 
\begin{align*}
u^{-p'}(b,c) & \leq u^{-p'}\left(\left\{ x\,:\,M_{u^{-p'}}^{-}\left(u^{p'}\chi_{(a,c)}\right)(x)\geq\left(\frac{v^{q}(a,b)}{(c-a)K^{q}}\right)^{\frac{p'}{q}}\right\} \right)\\
 & \leq\left(\frac{K^{q}(c-a)}{v^{q}(a,b)}\right)^{\frac{p'}{q}}\int_{\mathbb{R}}u^{p'}\chi_{(a,c)}u^{-p'}=\left(\frac{K^{q}(c-a)}{v^{q}(a,b)}\right)^{\frac{p'}{q}}(c-a),
\end{align*}
from what readily follows that
\[
\left(\frac{v^{q}(a,b)}{c-a}\right)^{\frac{1}{q}}\left(\frac{u^{-p'}(b,c)}{c-a}\right)^{\frac{1}{p'}}\leq K
\]
and hence we are done.
\end{proof}
\begin{prop}
\label{prop:Apq}Let $1<p\leq q<\infty$.
\begin{itemize}
\item $w\in A_{p,q}^{+}$, iff $w^{q}\in A_{1+\frac{q}{p'}}^{+}$ iff $w^{-p'}\in A_{1+\frac{p'}{q}}^{-}$.
\item $w\in A_{p,q}^{-}$ iff $w^{q}\in A_{1+\frac{q}{p'}}^{-}$ iff $w^{-p'}\in A_{1+\frac{p'}{q}}^{+}.$
\end{itemize}
\end{prop}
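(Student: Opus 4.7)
The plan is to verify each equivalence by direct algebraic manipulation of the defining suprema, matching exponents. I will only prove the first bullet point; the second then follows by the same calculation with the roles of $(a,b)$ and $(b,c)$ interchanged, that is, by replacing every occurrence of $A^+$ with $A^-$.

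For the first equivalence in the first bullet, set $r=1+q/p'$, so that $r-1=q/p'$ and $1/(r-1)=p'/q$. Plugging $u=w^q$ into the definition of $[u]_{A_r^+}$ yields $u^{-1/(r-1)}=w^{-q\cdot p'/q}=w^{-p'}$, and therefore
\[
[w^q]_{A_{1+q/p'}^+}=\sup_{a<b<c}\frac{1}{c-a}\int_a^b w^q\left(\frac{1}{c-a}\int_b^c w^{-p'}\right)^{q/p'}=[w]_{A_{p,q}^+}^{\,q},
\]
which gives the first equivalence. For the second one, set $s=1+p'/q$, so $s-1=p'/q$ and $1/(s-1)=q/p'$. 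Plugging $v=w^{-p'}$ into the definition of $[v]_{A_s^-}$ gives $v^{-1/(s-1)}=w^{p'\cdot q/p'}=w^q$, and hence
\[
[w^{-p'}]_{A_{1+p'/q}^-}=\sup_{a<b<c}\frac{1}{c-a}\int_b^c w^{-p'}\left(\frac{1}{c-a}\int_a^b w^q\right)^{p'/q}=[w]_{A_{p,q}^+}^{\,p'}.
\]
Thus $[w]_{A_{p,q}^+}<\infty$ if and only if $[w^q]_{A_{1+q/p'}^+}<\infty$ if and only if $[w^{-p'}]_{A_{1+p'/q}^-}<\infty$, establishing the first bullet.

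The second bullet is obtained by repeating the identical computation, but starting from the definition of $[w]_{A_{p,q}^-}$; the only change is that the integrals $\int_a^b$ and $\int_b^c$ trade places, which corresponds precisely to swapping $A^+$ and $A^-$ on the $A_r$ side. There is no genuine obstacle: the whole argument is just bookkeeping of the exponents $r-1=q/p'$ and $s-1=p'/q$, together with the observation that raising one of the two factors in the $A_{p,q}^{\pm}$ functional to the appropriate power $q$ or $p'$ turns it into an $A_r^{\pm}$ functional of $w^q$ or $w^{-p'}$ respectively.
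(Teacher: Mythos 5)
Your proof is correct, and the computations check out: with $r=1+q/p'$ one has $[w^q]_{A_r^+}=[w]_{A_{p,q}^+}^q$, and with $s=1+p'/q$ one has $[w^{-p'}]_{A_s^-}=[w]_{A_{p,q}^+}^{p'}$, with the second bullet following by swapping the two integrals. The paper in fact states this proposition without proof (it is the one-sided analogue of the standard $A_{p,q}$-to-$A_r$ dictionary), so your direct exponent-matching argument is exactly the routine verification the authors implicitly have in mind.
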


\subsection{Proofs of the main theorems}

Finally we are in the position to settle our main theorems. We begin
with the proof of Theorem \ref{thm:CompactExtrapolationAp} which
relies upon the following Lemma.
\begin{lem}
\label{lem:Interp}Let $\lambda\in[1,\infty)$, let $q,q_{1}\in(\lambda,\infty)$
and $v\in A_{q/\lambda}^{+}$, $v_{1}\in A_{q_{1}/\lambda}^{-}$.
Then 
\[
L^{q}(v)=[L^{q_{0}}(v_{0}),L^{q_{1}}(v_{1})]_{\gamma}
\]
for some $q_{0}\in(\lambda,\infty)$, $v_{0}\in A_{q_{0}/\lambda}^{+}$,
and $\gamma\in(0,1)$.
\end{lem}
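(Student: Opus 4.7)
The plan is as follows. First I would parameterize the interpolation data: fix $\gamma\in(0,1)$ small (to be tuned at the end), let $q_0$ be determined by
\[
\tfrac{1}{q}=\tfrac{1-\gamma}{q_0}+\tfrac{\gamma}{q_1},
\]
and set $v_0:=v^{q_0/(q(1-\gamma))}\,v_1^{-\gamma q_0/(q_1(1-\gamma))}$. With these choices, the identity $L^q(v)=[L^{q_0}(v_0),L^{q_1}(v_1)]_\gamma$ is the classical Calder\'on--Stein--Weiss complex interpolation identity for weighted $L^p$ spaces. Since $q_0\to q$ as $\gamma\to 0^+$, the requirement $q_0\in(\lambda,\infty)$ is automatic for $\gamma$ small enough, so the entire lemma reduces to verifying $v_0\in A_{q_0/\lambda}^+$ for some such $\gamma$.

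To establish the latter, I plan to invoke Lemma~\ref{lem:Gap}, which---after choosing its free parameters so that the resulting two-factor expression matches the $A_{q_0/\lambda}^+$ characteristic of $v_0$---reduces the problem to a gap-type estimate of the form
\[
\Bigl(\tfrac{t}{\ell}\int_a^{a+\ell/t}v_0\Bigr)\Bigl(\tfrac{t}{\ell}\int_{b-\ell/t}^{b}v_0^{-\lambda/(q_0-\lambda)}\Bigr)^{(q_0-\lambda)/\lambda}\le K
\]
uniformly over intervals $(a,b)$ with $\ell=b-a$, for some fixed $t>2$.

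Writing $v_0=v^{\alpha}v_1^{-\beta}$, I would apply H\"older's inequality on $(a,a+\ell/t)$ to separate $v^\alpha$ from $v_1^{-\beta}$, and then use Lemma~\ref{lem:LemRHI} twice: the reverse H\"older inequality for $v\in A_{q/\lambda}^+$ to convert the resulting slightly super-linear power of $v$ on the left half into the linear average of $v$ on a mildly enlarged interval, and the reverse H\"older inequality for the dual weight $v_1^{-\lambda/(q_1-\lambda)}$---which belongs to $A^+$ by the standard $A_p^-$/$A_{p'}^+$ duality---to handle the corresponding power of $v_1^{-1}$. A symmetric argument on $(b-\ell/t,b)$, this time exploiting the right-half reverse H\"older inequalities of Lemma~\ref{lem:LemRHI} for $v_1\in A_{q_1/\lambda}^-$ and for its dual partner $v^{-\lambda/(q-\lambda)}\in A^-$, would handle the second factor. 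Multiplying the two estimates and invoking the $A_{q/\lambda}^+$ constant of $v$ together with the $A_{q_1/\lambda}^-$ constant of $v_1$ on $(a,b)$ should then produce the required uniform bound. This directional matchup---$A^+$ controlling left halves, $A^-$ controlling right halves---is precisely why the hypothesis $v_1\in A_{q_1/\lambda}^-$ (rather than $A_{q_1/\lambda}^+$) is the natural one here.

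The hard part will be bookkeeping: each H\"older application produces a compatibility constraint linking its conjugate exponents to the (fixed) self-improvement exponents from Lemma~\ref{lem:LemRHI} and to $\alpha$, $\beta$, $q_0/\lambda-1$. Since $\alpha\to 1$, $\beta\to 0$, and $q_0\to q$ as $\gamma\to 0^+$, these constraints become strict inequalities in the limit and can therefore be enforced by choosing $\gamma$ sufficiently small at the outset. Once such a $\gamma$ is fixed, all the H\"older and reverse H\"older estimates should slot together, the gap estimate should close, and the lemma follows.
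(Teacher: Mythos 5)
Your plan coincides with the paper's: the Stein--Weiss interpolation identity fixes $q_0$ and $v_0$, membership $v_0\in A_{q_0/\lambda}^+$ is reduced via Lemma~\ref{lem:Gap} to a gap estimate on the two outer quarters of an interval, and that estimate is closed by H\"older followed by the one-sided reverse H\"older inequalities of Lemma~\ref{lem:LemRHI} (left-half for $v$ and the dual of $v_1$, right-half for $v_1$ and the dual of $v$), taking $\gamma$ small enough so the self-improvement exponents absorb the H\"older losses. This is exactly the paper's proof via Lemma~\ref{lem:Key} and Claim~\ref{claim:2}.
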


Assuming that we have already settled this Lemma, we are in the position
to provide our proof of Theorem \ref{thm:CompactExtrapolationAp}. 
\begin{proof}[Proof of Theorem \ref{thm:CompactExtrapolationAp}]
By Theorem \ref{thm:Extrapolation}, since $T$ is a bounded linear
operator on $L^{p_{0}}(w)$ for some $p_{0}\in(\lambda,\infty)$ and
every $w\in A_{p_{0}/\lambda}^{+},$ then it is bounded as well for
on $L^{p}(w)$ for all $p\in(\lambda,\infty)$ and all $w\in A_{p/\lambda}^{+}$.
Additionally, we assumed that $T$ is a compact operator on $L^{p_{1}}(w_{1})$
for some $p_{1}\in(\lambda,\infty)$ and some $w_{1}\in A_{p_{1}/\lambda}^{-}$.
We need to show that $T$ is compact on $L^{p}(w)$ for all $p\in(\lambda,\infty)$
and all $w\in A_{p/\lambda}^{+}$. Now, fix some $p\in(\lambda,\infty)$
and $w\in A_{p/\lambda}^{+}$. By Lemma \ref{lem:Interp}, we have
that
\[
L^{p}(w)=[L^{p_{0}}(w_{0}),L^{p_{1}}(w_{1})]_{\theta}
\]
for some $p_{0}\in(\lambda,\infty)$, some $w_{0}\in A_{p_{0}/\lambda}^{+}$,
and some $\theta\in(0,1)$. Choosing $X_{j}=Y_{j}=L^{p_{j}}(w_{j})$,
we know that $T:X_{0}+X_{1}\to Y_{0}+Y_{1}$, that $T:X_{j}\to Y_{j}$
is bounded, since $T$ is bounded on all $L^{q}(w)$ with $q\in(\lambda,\infty)$
and $w\in A_{q/\lambda}^{+}$ as we noted above, and that $T:X_{1}\to Y_{1}$
is compact by assumption. Lemma \ref{lem:Cond4}, assures that the
last condition 4 of Theorem \ref{thm:CwKa} is also satisfied by the
spaces $X_{j}=L^{p_{j}}(w_{j})$. Hence, by Theorem \ref{thm:CwKa},
it follows that $T$ is also compact on $[X_{0},X_{1}]_{\theta}=[Y_{0},Y_{1}]_{\theta}=L^{p}(w)$. 
\end{proof}
For Theorem \ref{thm:CompactExtrapolationApq} the argument is analogous.
We rely upon the following Lemma.
\begin{lem}
\label{lem:Interppq}Let $1<p_{1}\leq q_{1}<\infty$, $1<p\leq q<\infty$,
$w_{1}\in A_{p_{1},q_{1}}^{-}$, $w\in A_{p,q}^{+}$. Then 
\[
L^{p}(w^{p})=[L^{p_{0}}(w_{0}),L^{p_{1}}(w_{1})]_{\gamma}\qquad L^{q}(w^{q})=[L^{q_{0}}(w_{0}),L^{q_{1}}(w_{1})]_{\gamma}
\]
for some $1<p_{0}\leq q_{0}<\infty$, $w_{0}\in A_{p_{0},q_{0}}^{+}$,
and $\gamma\in(0,1)$.
\end{lem}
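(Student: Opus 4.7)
The plan is to emulate the diagonal argument of Lemma \ref{lem:Interp}, but to control the off-diagonal $A_{p,q}^{+}$ condition on $w_0$ via Proposition \ref{prop:Apq}. The Calder\'on--Stein--Weiss complex interpolation formula for weighted Lebesgue spaces says that for any $\gamma\in(0,1)$,
\[
[L^{p_{0}}(w_{0}^{p_{0}}),L^{p_{1}}(w_{1}^{p_{1}})]_{\gamma}=L^{p}(\widetilde w^{\,p}),
\]
where $\tfrac{1}{p}=\tfrac{1-\gamma}{p_{0}}+\tfrac{\gamma}{p_{1}}$ and $\widetilde w=w_{0}^{1-\gamma}w_{1}^{\gamma}$; analogously at the $q$-level with the same $\gamma$. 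Demanding $\widetilde w=w$ in both identities simultaneously forces
\[
p_{0}=\frac{(1-\gamma)\,p\,p_{1}}{p_{1}-\gamma p},\qquad q_{0}=\frac{(1-\gamma)\,q\,q_{1}}{q_{1}-\gamma q},\qquad w_{0}=w^{1/(1-\gamma)}\,w_{1}^{-\gamma/(1-\gamma)}.
\]
For $\gamma>0$ sufficiently small one has $1<p_{0}\le q_{0}<\infty$: indeed $p_{0}\to p$, $q_{0}\to q$ as $\gamma\to 0^{+}$, and the monotonicity $p_{0}\le q_{0}$ reduces to $\tfrac{1}{p}-\tfrac{1}{q}\ge\gamma\bigl(\tfrac{1}{p_{1}}-\tfrac{1}{q_{1}}\bigr)$, which holds for small $\gamma$ since both sides are nonnegative. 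Thus both interpolation identities in the conclusion hold by construction, and the proof reduces to verifying $w_{0}\in A_{p_{0},q_{0}}^{+}$.

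By Proposition \ref{prop:Apq} this membership is equivalent to $w_{0}^{q_{0}}\in A_{1+q_{0}/p_{0}'}^{+}$. By Lemma \ref{lem:Gap}, applied with $u=v=w_{0}$, it suffices to check the gapped condition
\[
\Bigl(\tfrac{1}{l_{I}/t}\int_{I^{-}}w_{0}^{q_{0}}\Bigr)^{1/q_{0}}\Bigl(\tfrac{1}{l_{I}/t}\int_{I^{+}}w_{0}^{-p_{0}'}\Bigr)^{1/p_{0}'}\le K
\]
for some $t>2$ and every interval $I$ (here $I^{-}$, $I^{+}$ denote the extreme $t$-fractions of $I$). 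Expanding
\[
w_{0}^{q_{0}}=w^{q_{0}/(1-\gamma)}\,w_{1}^{-\gamma q_{0}/(1-\gamma)},\qquad w_{0}^{-p_{0}'}=w^{-p_{0}'/(1-\gamma)}\,w_{1}^{\gamma p_{0}'/(1-\gamma)},
\]
I would apply H\"older's inequality with an auxiliary exponent $s>1$ to decouple the $w$-factor from the $w_{1}$-factor in each integral, and then estimate the four resulting pieces via the one-sided reverse H\"older inequality of Lemma \ref{lem:LemRHI}: on $I^{-}$ applied to $w^{q}\in A^{+}$ and to $w_{1}^{-p_{1}'}\in A^{+}$ (both memberships via Proposition \ref{prop:Apq}), and on $I^{+}$ applied to $w^{-p'}\in A^{-}$ and to $w_{1}^{q_{1}}\in A^{-}$. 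Recombining yields a bound depending only on $[w]_{A_{p,q}^{+}}$ and $[w_{1}]_{A_{p_{1},q_{1}}^{-}}$, and a final invocation of Proposition \ref{prop:Apq} returns $w_{0}\in A_{p_{0},q_{0}}^{+}$.

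The main obstacle is the bookkeeping of exponents: one must choose $\gamma>0$ (and the H\"older exponent $s>1$) small enough that the powers $\tfrac{q_{0}s}{1-\gamma},\ \tfrac{\gamma q_{0}s'}{1-\gamma},\ \tfrac{p_{0}'s}{1-\gamma},\ \tfrac{\gamma p_{0}'s'}{1-\gamma}$ that appear after the H\"older split all stay below the reverse H\"older exponents supplied by Lemma \ref{lem:LemRHI} for the four weights involved, while keeping $1<p_{0}\le q_{0}<\infty$. Since each of $w^{q},\ w^{-p'},\ w_{1}^{q_{1}},\ w_{1}^{-p_{1}'}$ has a strictly positive reverse H\"older improvement, and since the ``auxiliary'' powers $\tfrac{\gamma q_{0}}{1-\gamma}$ and $\tfrac{\gamma p_{0}'}{1-\gamma}$ tend to $0$ as $\gamma\to 0^{+}$ while the ``main'' powers $\tfrac{q_{0}}{1-\gamma}$ and $\tfrac{p_{0}'}{1-\gamma}$ tend to $q$ and $p'$ respectively, a sufficiently small $\gamma$ simultaneously satisfies every requirement, completing the verification.
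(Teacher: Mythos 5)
Your proposal follows the paper's scheme: set $p_0(\gamma),q_0(\gamma),w_0(\gamma)$ by the Stein--Weiss convexity relations, reduce the membership $w_0\in A_{p_0,q_0}^+$ to a gapped condition via Lemma \ref{lem:Gap}, split each gapped integral by H\"older into a $w$-piece and a $w_1$-piece, control the four resulting pieces by the one-sided reverse H\"older inequality of Lemma \ref{lem:LemRHI} (using Proposition \ref{prop:Apq} to identify $w^q, w^{-p'}, w_1^{q_1}, w_1^{-p_1'}$ as one-sided $A_p$ weights), and recombine. (The detour through Proposition \ref{prop:Apq} to rewrite $w_0\in A_{p_0,q_0}^+$ as a one-sided $A_p$-condition is superfluous: Lemma \ref{lem:Gap} is already stated for the $A_{p,q}$-type two-exponent condition and is applied to it directly.)

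There is, however, a genuine gap in the exponent bookkeeping at the end. You take a single H\"older exponent $s>1$ and argue that taking $\gamma$ (and $s$) small makes everything work because all four powers ``stay below the reverse H\"older exponents.'' But the condition needed for Lemma \ref{lem:LemRHI} is not only an upper bound; the normalized power must also exceed $1$. With $s$ fixed, the two auxiliary (normalized) powers $\tfrac{\gamma q_0 s'}{p_1'(1-\gamma)}$ and $\tfrac{\gamma p_0' s'}{q_1(1-\gamma)}$ tend to $0$ as $\gamma\to 0^+$, so they drop \emph{below} $1$ and the reverse H\"older inequality is not applicable to those two pieces. If instead you send $s\to 1^+$ (so that the main powers $\tfrac{q_0 s}{q(1-\gamma)},\tfrac{p_0' s}{p'(1-\gamma)}\to 1^+$, as your last sentence suggests), then $s'\to\infty$ and the products $\gamma s'$ are indeterminate unless you couple $s$ to $\gamma$. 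This coupling is exactly what the paper does: it uses two $\theta$-dependent H\"older exponents $1+\varepsilon$ and $1+\delta$ with $\varepsilon=q\theta/p_1'$ and $\delta=\theta p'/q_1$, so that the four normalized powers $r(\theta)=s(\theta)=\tfrac{q_0(p_1'+q\theta)}{qp_1'(1-\theta)}$ and $t(\theta)=u(\theta)=\tfrac{p_0'(q_1+\theta p')}{q_1p'(1-\theta)}$ all tend to $1$ from above as $\theta\to 0^+$, permitting reverse H\"older on every factor. Your write-up would either need this coupling or a separate Jensen-plus-inclusion argument for the sub-$1$ powers (observing that for $0<r<1$ one has $(\tfrac{1}{|I_1^4|}\int_{I_1^4}\rho^{r})^{1/r}\le \tfrac{1}{|I_1^4|}\int_{I_1^4}\rho\le 2\cdot\tfrac{1}{|I_1^2|}\int_{I_1^2}\rho$); as written, ``a sufficiently small $\gamma$ simultaneously satisfies every requirement'' is not justified.
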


Again, if we assume that we have already settled that Lemma, we are
in the position to settle Theorem \ref{thm:CompactExtrapolationApq}.
\begin{proof}[Proof of Theorem \ref{thm:CompactExtrapolationApq}]
By Theorem \ref{thm:CompactExtrapolationApq}, $L^{p_{0}}(w_{0}^{p_{0}})$
to $L^{q_{0}}(w^{q_{0}})$ for some $1<p_{0}\leq q_{0}<\infty$ and
every $w_{0}\in A_{p_{0},q_{0}}^{+}$, then it is bounded as well
from $L^{p}(w^{p})$ to $L^{q}(w^{q})$ for every $1<p\leq q<\infty$
and all $w\in A_{p,q}^{+}$. We have additionally that $T$ is a compact
operator from $L^{p_{1}}(w_{1}^{p_{1}})$ to $L^{q_{1}}(w_{1}^{q_{1}})$
and some $w_{1}\in A_{p_{1},q_{1}}^{-}.$ We need to show that $T$
is compact from $L^{p}(w^{p})$ to $L^{q}(w^{q})$ for all $1<p\leq q<\infty$
and every $w\in A_{p,q}^{+}$. Now, fix some $1<p\leq q<\infty$ and
$w\in A_{p,q}^{+}$. By Lemma \ref{lem:Interppq}, we have that
\[
L^{p}(w^{p})=[L^{p_{0}}(w_{0}),L^{p_{1}}(w_{1})]_{\theta}\qquad L^{q}(w^{q})=[L^{q_{0}}(w_{0}),L^{q_{1}}(w_{1})]_{\theta}
\]
for some $1<p_{0}\leq q_{0}<\infty$ and some $w_{0}\in A_{p_{0},q_{0}}^{+}$,
and some $\theta\in(0,1)$. Choosing $X_{j}=L^{p_{j}}(w_{j})$, $Y_{j}=L^{q_{j}}(w_{j})$,
we know that $T:X_{0}+X_{1}\to Y_{0}+Y_{1}$, that $T:X_{j}\to Y_{j}$
is bounded, since $T$ is bounded from $L^{p}(w^{p})$ to $L^{q}(w^{q})$
for every $1<p\leq q<\infty$ and every $w\in A_{p,q}^{+}$ as we
noted above, and that $T:X_{1}\to Y_{1}$ is compact by assumption.
Lemma \ref{lem:Cond4}, assures that the last condition 4 of Theorem
\ref{thm:CwKa} also hold for $X_{j}=L^{p_{j}}(w_{j}^{p_{j}})$. Hence,
by Theorem \ref{thm:CwKa}, it follows that $T$ is also compact from
$[X_{0},X_{1}]_{\theta}=L^{p}(w^{p})$ to $[Y_{0},Y_{1}]_{\theta}=L^{q}(w^{q})$
as we wanted to show. 
\end{proof}
We devote the remainder of the section to settle Lemmas \ref{lem:Interp}
and \ref{lem:Interppq}.

\subsubsection{Proofs of the key lemmatta }

The first ingredient to settle Propositions \ref{lem:Interp} and
\ref{lem:Interppq} is the following Theorem.
\begin{thm}
If $q_{0},q_{1}\in[1,\infty)$ and $w_{0},w_{1}$ are two weights,
then for all $\theta\in(0,1)$ we have 
\[
[L^{q_{0}}(w_{0}),L^{q_{1}}(w_{1})]_{\theta}=L^{q}(w),
\]
where 
\begin{equation}
\frac{1}{q}=\frac{1-\theta}{q_{0}}+\frac{\theta}{q_{1}}\qquad\text{and}\qquad w^{\frac{1}{q}}=w_{0}^{\frac{1-\theta}{q_{0}}}w_{1}^{\frac{\theta}{q_{1}}}.\label{eq:convexity}
\end{equation}
\end{thm}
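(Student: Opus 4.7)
The statement is the Stein--Weiss complex interpolation theorem for weighted Lebesgue spaces; the plan is to verify the two continuous embeddings $L^{q}(w)\hookrightarrow [L^{q_{0}}(w_{0}),L^{q_{1}}(w_{1})]_{\theta}$ and its converse directly from Calder\'on's definition of the complex interpolation functor via admissible analytic $(X_{0}+X_{1})$-valued functions on the strip $S=\{z:0\le \Re z\le 1\}$.

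To realize $f\in L^{q}(w)$ in the interpolation space I would use the classical Stein--Weiss ansatz. Set $\alpha(z)=(1-z)/q_{0}+z/q_{1}$, so that $\alpha(\theta)=1/q$, and (for $f$ first a simple function, with zeros handled by convention) define
$$
F(z)=\mathrm{sgn}(f)\,|f|^{q\alpha(z)}\,w^{\alpha(z)}\,w_{0}^{-(1-z)/q_{0}}\,w_{1}^{-z/q_{1}}.
$$
At $z=\theta$ the identity $w^{1/q}=w_{0}^{(1-\theta)/q_{0}}w_{1}^{\theta/q_{1}}$ yields $F(\theta)=f$. On the boundary lines $\Re z=j$ one has $\Re \alpha(j+it)=1/q_{j}$, and since complex exponents of positive quantities contribute only unimodular phase factors beyond their real parts, one obtains the pointwise identities $|F(it)|^{q_{0}}w_{0}=|f|^{q}w=|F(1+it)|^{q_{1}}w_{1}$. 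Consequently the boundary norms are constant in $t$ and equal to $\|f\|_{L^{q}(w)}^{q/q_{j}}$. Replacing $F$ by $e^{\lambda(z-\theta)}F$ for a real parameter $\lambda$ and optimizing in $\lambda$ converts these into the Riesz--Thorin style bound
$$
\|f\|_{[L^{q_{0}}(w_{0}),L^{q_{1}}(w_{1})]_{\theta}}\le \|f\|_{L^{q}(w)}^{(1-\theta)q/q_{0}+\theta q/q_{1}}=\|f\|_{L^{q}(w)}.
$$

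For the reverse embedding I would argue by duality. Given $f$ in the interpolation space, with any admissible $F$ realizing $F(\theta)=f$, and any $g$ in the predual $L^{q'}(w^{-q'/q})$ of $L^{q}(w)$, I would build a companion admissible function $G(z)$ out of $g$ by the analogous Stein--Weiss ansatz with exponents $\beta(z)=(1-z)/q_{0}'+z/q_{1}'$. The three-lines lemma applied to the scalar holomorphic function $z\mapsto \int_{\mathbb{R}}F(z)G(z)\,dx$ then yields $\bigl|\int f g\,dx\bigr|\le M$, where $M$ is the product of the four boundary norms weighted by $1-\theta$ and $\theta$; taking the infimum over $F$ and the supremum over $g$ produces $\|f\|_{L^{q}(w)}\le \|f\|_{[L^{q_{0}}(w_{0}),L^{q_{1}}(w_{1})]_{\theta}}$.

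The main technical point is the admissibility of the ansatz functions $F$ and $G$: continuity and uniform $(X_{0}+X_{1})$-boundedness on the strip together with holomorphy in the interior. This is immediate when $f$ and $g$ are simple functions supported on sets where $w_{0},w_{1}$ are bounded away from $0$ and $\infty$, since then the exponentials $|f|^{q\alpha(z)}$ and the weight powers have bounded modulus on $S$ and the expression becomes a finite linear combination of entire functions. The general case is handled by approximating in $L^{q}(w)$ and $L^{q'}(w^{-q'/q})$ and passing to the limit using the lower semicontinuity of the interpolation norm together with the continuous inclusion $[L^{q_{0}}(w_{0}),L^{q_{1}}(w_{1})]_{\theta}\hookrightarrow L^{q_{0}}(w_{0})+L^{q_{1}}(w_{1})$; this is the routine but somewhat careful step where the Calder\'on construction has to be matched with the density arguments.
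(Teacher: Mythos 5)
The paper does not prove this theorem; it simply cites it as \cite[Theorem 5.5.3]{BL} with the remark that it goes back to Stein and Weiss \cite{SW}. Your proof sketch is the standard argument in those references: the Stein--Weiss analytic ansatz for the embedding $L^{q}(w)\hookrightarrow[L^{q_{0}}(w_{0}),L^{q_{1}}(w_{1})]_{\theta}$ (with the $e^{\lambda(z-\theta)}$ normalization and the convexity identity $(1-\theta)q/q_{0}+\theta q/q_{1}=1$), and duality via the three-lines lemma for the converse. The computations you outline are correct: $F(\theta)=f$, $|F(j+it)|^{q_{j}}w_{j}=|f|^{q}w$ on the two boundary lines, and the dual couple $L^{q_{j}'}(w_{j}^{-q_{j}'/q_{j}})$ satisfies the same convexity relations in exponent and weight, so the three-lines estimate closes. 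The only places that would require more care in a complete write-up --- and you flag them yourself --- are (i) the admissibility/density step, where one must check that $L^{q}(w)\hookrightarrow L^{q_{0}}(w_{0})+L^{q_{1}}(w_{1})$ so that the limit of the approximating sequence can be identified with $f$, and (ii) the endpoint $q_{j}=1$, where $q_{j}'=\infty$ and the predual description has to be interpreted appropriately; both are handled in Bergh--L\"ofstr\"om. So your proposal is correct and is essentially the same proof the cited source gives.
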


The result above can be found in of \cite[Theorem 5.5.3]{BLo}, however
it can be traced back to Stein and Weiss \cite{SW}. Having that result
in mind Lemmatta \ref{lem:Interp} and \ref{lem:Interppq} follow
from Lemmatta \ref{lem:Key} and \ref{lem:Keypq} respectively.
\begin{lem}
\label{lem:Key}Let $\lambda\in[1,\infty)$, $q_{0},q\in(\lambda,\infty)$,
$w_{1}\in A_{q_{1}/\lambda}^{-}$, $w\in A_{q/\lambda}^{+}.$ Then
there exists $q_{0}\in(\lambda,\infty)$, $w_{0}\in A_{q_{0}/\lambda}^{+}$,
and $\theta\in(0,1)$ such that (\ref{eq:convexity}) holds.
\end{lem}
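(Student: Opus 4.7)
The plan is to construct $(q_{0},w_{0})$ as a small perturbation of $(q,w)$ around $\theta=0$ and then verify the one sided $A_{q_{0}/\lambda}^{+}$ condition directly. Given (\ref{eq:convexity}), I would fix a small $\theta\in(0,1)$ and let $q_{0}$ be determined by
\[
\frac{1}{q_{0}}=\frac{1}{1-\theta}\left(\frac{1}{q}-\frac{\theta}{q_{1}}\right).
\]
Since $q_{0}\to q\in(\lambda,\infty)$ as $\theta\to 0^{+}$, we have $q_{0}\in(\lambda,\infty)$ for all sufficiently small $\theta$. I would then define
\[
w_{0}:=w^{a}\,w_{1}^{-b},\qquad a=\frac{q_{0}}{(1-\theta)q},\quad b=\frac{\theta q_{0}}{(1-\theta)q_{1}},
\]
so that the multiplicative relation in (\ref{eq:convexity}) holds by construction; note that $a\to 1$ and $b\to 0$ as $\theta\to 0^{+}$.

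The core of the argument is showing that $w_{0}\in A_{q_{0}/\lambda}^{+}$. By Lemma \ref{lem:Gap} it suffices to bound the Muckenhoupt ratio with a gap, that is, on the far left and far right pieces of an interval $I$. On each piece I would apply H\"older's inequality to split $w_{0}=w^{a}w_{1}^{-b}$ (respectively $w_{0}^{-1/(q_{0}/\lambda-1)}$) into separate $w$- and $w_{1}$-factors, and then invoke Lemma \ref{lem:LemRHI}. On the left piece this relies on the reverse H\"older inequality for $w\in A_{q/\lambda}^{+}$ directly, together with the reverse H\"older inequality for $w_{1}^{-1/(q_{1}/\lambda-1)}$, which lies in $A_{(q_{1}/\lambda)'}^{+}$ by the standard one sided duality $A_{p}^{-}\leftrightarrow A_{p'}^{+}$. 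On the right piece the roles are reversed: one uses reverse H\"older for $w_{1}\in A_{q_{1}/\lambda}^{-}$ and for $w^{-1/(q/\lambda-1)}\in A_{(q/\lambda)'}^{-}$. The H\"older conjugate exponents must be calibrated so that the powers of $w$ and $w_{1}$ that appear fall inside the respective reverse H\"older ranges $[1,r_{w}]$ and $[1,s_{w_{1}}]$; this calibration is feasible precisely because $a\to 1$ and $b\to 0$ as $\theta\to 0^{+}$.

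Combining the four estimates, with exponents matched by (\ref{eq:convexity}), one produces the $A_{q_{0}/\lambda}^{+}$ ratio of $w_{0}$ on gapped intervals as essentially a product of $[w]_{A_{q/\lambda}^{+}}^{1-\theta}$ and $[w_{1}]_{A_{q_{1}/\lambda}^{-}}^{\theta}$ up to the reverse H\"older constants. Lemma \ref{lem:Gap} then removes the gap and concludes that $w_{0}\in A_{q_{0}/\lambda}^{+}$. The main obstacle is the bookkeeping: two H\"older splittings and four reverse H\"older exponent constraints must be arranged simultaneously. Conceptually the construction is a perturbation around the trivial case $\theta=0$, where $(q_{0},w_{0})=(q,w)$ is automatically in $A_{q/\lambda}^{+}$; the content of the lemma is that some small $\theta>0$ still yields a genuine factorization, which combined with the Stein--Weiss interpolation theorem quoted before Lemma \ref{lem:Key} realizes $L^{q}(w)$ as the complex interpolate of $L^{q_{0}}(w_{0})$ and $L^{q_{1}}(w_{1})$.
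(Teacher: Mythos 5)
Your proposal is correct and follows essentially the same route as the paper: define $q_{0}(\theta)$ and $w_{0}(\theta)=w^{a}w_{1}^{-b}$ by the Stein--Weiss relations, reduce to a gapped $A^{+}_{q_0/\lambda}$ estimate via Lemma \ref{lem:Gap}, split each of the two averages by H\"older into a $w$-factor and a $w_1$-factor, and invoke the one-sided reverse H\"older inequality (Lemma \ref{lem:LemRHI}) on the left quarter for $w\in A^{+}_{q/\lambda}$ and $w_{1}^{-1/(q_{1}/\lambda-1)}\in A^{+}_{(q_{1}/\lambda)'}$, and on the right quarter for $w_{1}\in A^{-}_{q_{1}/\lambda}$ and $w^{-1/(q/\lambda-1)}\in A^{-}_{(q/\lambda)'}$, with $\theta$ taken small. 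The one point worth flagging: the statement ``this calibration is feasible because $a\to1$, $b\to0$'' is a bit quick --- with fixed H\"older exponents the power on the $w_1$-factor tends to $0$ rather than staying near $1$; the paper instead couples the H\"older exponents to $\theta$ (choosing $\varepsilon=\theta p/p_{1}'$, $\delta=\theta p'/p_{1}$) so that all four exponents collapse to two, both tending to $1$ as $\theta\to0^{+}$, which is what makes the reverse H\"older step uniform. Spelling out that coupling would close the only imprecision in your write-up.
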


\begin{lem}
\label{lem:Keypq}Let $1<p_{1}\leq q_{1}<\infty$, $1<p\leq q<\infty$,
$w_{1}\in A_{p_{1},q_{1}}^{-}$, $w\in A_{p,q}^{+}$. Then there exist
$1<p_{0}\leq q_{0}<\infty$, $w_{0}\in A_{p_{0},q_{0}}^{+}$, and
$\theta\in(0,1)$ such that
\begin{align*}
[L^{p_{0}}(w_{0}^{p_{0}}),L^{p_{1}}(w_{1}^{p_{1}})]_{\theta} & =L^{p}(w^{p})\qquad\text{and}\\{}
[L^{q_{0}}(w_{0}^{q_{0}}),L^{q_{1}}(w_{1}^{q_{1}})]_{\theta} & =L^{q}(w^{q})
\end{align*}
where 
\[
\frac{1}{p}=\frac{1-\theta}{p_{0}}+\frac{\theta}{p_{1}},\qquad\frac{1}{q}=\frac{1-\theta}{q_{0}}+\frac{\theta}{q_{1}},\qquad w=w_{0}^{1-\theta}w_{1}^{\theta}.
\]
\end{lem}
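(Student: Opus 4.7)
The plan is to invoke the Stein--Weiss interpolation theorem (quoted just before the statement) to convert the two interpolation identities into scalar and pointwise relations for $(p_0,q_0,w_0,\theta)$, and then to check that $w_0\in A_{p_0,q_0}^+$ for $\theta$ sufficiently small.

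I would fix $\theta\in(0,1)$ with $\theta p<p_1$ and $\theta q<q_1$ (further smallness imposed below) and set
\[
p_0=\frac{pp_1(1-\theta)}{p_1-\theta p},\qquad q_0=\frac{qq_1(1-\theta)}{q_1-\theta q},\qquad w_0=w^{\frac{1}{1-\theta}}w_1^{-\frac{\theta}{1-\theta}}.
\]
These are the unique solutions of $\frac{1}{p}=\frac{1-\theta}{p_0}+\frac{\theta}{p_1}$, $\frac{1}{q}=\frac{1-\theta}{q_0}+\frac{\theta}{q_1}$, and $w=w_0^{1-\theta}w_1^{\theta}$, which by Stein--Weiss then deliver the two interpolation identities. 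For small $\theta$, $p_0,q_0\in(1,\infty)$ and $p_0\le q_0$.

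The remaining (and main) step is to prove $w_0\in A_{p_0,q_0}^+$. By Lemma \ref{lem:Gap} it suffices to establish a gap version of the condition with, say, $t=4$: for every interval $I=(a,c)$ of length $L$, writing $J^-=(a,a+L/4)$ and $J^+=(c-L/4,c)$,
\[
\left(\frac{1}{L/4}\int_{J^-}w_0^{q_0}\right)^{1/q_0}\left(\frac{1}{L/4}\int_{J^+}w_0^{-p_0'}\right)^{1/p_0'}\lesssim 1.
\]
Using $w_0^{q_0}=w^{q_0/(1-\theta)}w_1^{-q_0\theta/(1-\theta)}$ on $J^-$ and $w_0^{-p_0'}=w^{-p_0'/(1-\theta)}w_1^{p_0'\theta/(1-\theta)}$ on $J^+$, I apply H\"older's inequality to separate the $w$ and $w_1$ factors on each side; the $w$-exponents tend to $q,p'$ and the $w_1$-exponents tend to $0$ as $\theta\to 0^+$. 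Invoking Proposition \ref{prop:Apq}, Lemma \ref{lem:LemRHI} then applies to $w^q\in A_{1+q/p'}^+$ and $w_1^{-p_1'}\in A_{1+p_1'/q_1}^+$ on $J^-$ (left-sided reverse H\"older), and to $w^{-p'}\in A_{1+p'/q}^-$ and $w_1^{q_1}\in A_{1+q_1/p_1'}^-$ on $J^+$ (right-sided reverse H\"older); combined with Jensen's inequality (used to lift the small $w_1$-exponents to $p_1'$ and $q_1$), this reduces the estimate to products of averages of $w^q,w_1^{-p_1'}$ over a slight left-enlargement of $J^-$ and of $w^{-p'},w_1^{q_1}$ over a slight right-enlargement of $J^+$. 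The $A_{p,q}^+$ condition for $w$ with splitting point $c-L/4$, together with the $A_{p_1,q_1}^-$ condition for $w_1$ with splitting point $a+L/4$, then combine the four averages into a bound independent of $I$.

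The main obstacle is the H\"older bookkeeping: one must arrange the splitting exponents so that the two $w$-exponents stay below the reverse H\"older thresholds for $w^q$ on $J^-$ and $w^{-p'}$ on $J^+$, and the two $w_1$-exponents stay below the thresholds for $w_1^{-p_1'}$ on $J^-$ and $w_1^{q_1}$ on $J^+$. All four constraints are met once $\theta$ is sufficiently small, since the $w$-exponents tend to $q,p'$ and the $w_1$-exponents vanish. A crucial structural point is that the $w_1$ power is \emph{negative} on $J^-$ and \emph{positive} on $J^+$, matching the $A_{p_1,q_1}^-$ geometry (left average of $w_1^{-p_1'}$, right average of $w_1^{q_1}$) exactly; this is precisely why the natural hypothesis on $w_1$ is $A_{p_1,q_1}^-$ rather than $A_{p_1,q_1}^+$.
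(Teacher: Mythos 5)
Your proposal follows the same strategy as the paper: after solving for $p_0,q_0,w_0$ from the convexity constraints (via the Stein--Weiss identification), membership of $w_0$ in $A_{p_0,q_0}^+$ is reduced to a gap condition via Lemma~\ref{lem:Gap}, which is then verified by splitting the $J^-$ and $J^+$ averages of $w_0^{q_0}$ and $w_0^{-p_0'}$ with H\"older and invoking Proposition~\ref{prop:Apq} together with the one-sided reverse H\"older inequality of Lemma~\ref{lem:LemRHI}. The only differences are a bookkeeping choice --- you keep the H\"older parameters $\varepsilon,\delta$ fixed and control the small $w_1$-exponents by Jensen, whereas the paper sets $\varepsilon=q\theta/p_1'$, $\delta=\theta p'/q_1$ so that all four exponents sit slightly above $q,p',p_1',q_1$ and reverse H\"older handles each one --- and a slip of orientation: the reverse H\"older inequality for $A^+$ (resp. $A^-$) weights controls the \emph{left} (resp. \emph{right}) half of an interval by its full average, so $J^-$ enlarges to the right and $J^+$ enlarges to the left, i.e., inward towards the midpoint of $(a,c)$ (not outward as you wrote); this is in fact the geometry that lets the $A_{p,q}^+$ and $A_{p_1,q_1}^-$ conditions close the estimate, so it is a typo rather than a gap.
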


Before presenting the proofs of these Lemmatta we would like to provide
a remark regarding the hypothesis in the statement of Lemma \ref{lem:Key}.
A similar remark would is feasible as well for the $A_{p,q}^{+}$
case.
\begin{rem}
\label{rem:HyAp+-}Lemma \ref{lem:Key} does not hold if we replace
$w_{1}\in A_{q_{1}/\lambda}^{-}$ by $w_{1}\in A_{q_{1}/\lambda}^{+}$.
Indeed, assume that $w_{1}(x)=e^{x^{3}}\in A_{q}^{+}$ and that $w(x)=e^{x}\in A_{q}^{+}$.
Observe that if the conclusion of the preceding lemma held, that would
imply that $w_{0}=w_{1}^{-\frac{\theta}{q_{1}}\frac{q_{0}}{1-\theta}}w^{\frac{1}{q}\frac{q_{0}}{1-\theta}}$
for some $\theta\in(0,1)$, namely
\[
w_{0}(x)=e^{-x^{3}\frac{\theta}{q_{1}}\frac{q_{0}}{1-\theta}}e^{\frac{1}{q}\frac{q_{0}}{1-\theta}x}=e^{\frac{q_{0}}{1-\theta}\left(-\frac{\theta}{q_{1}}x^{3}+\frac{1}{q}x\right)}
\]
for some $\theta\in(0,1)$. Observe that for any $\theta\in(0,1)$
we have that 
\begin{equation}
\int_{0}^{\infty}w_{0}(x)dx<\infty.\label{eq:Condw0}
\end{equation}
However such a property prevents $w_{0}$ from being an $A_{q_{0}}^{+}$
weight for every possible choice of $q_{0},$ since if $w\in A_{q_{0}}^{+}$
then 
\[
\int_{a}^{\infty}w_{0}(x)dx=\infty
\]
for every $a\in\mathbb{R}$. Indeed, if $w\in A_{q_{0}}^{+}$ then
\[
\int_{0}^{x}w_{0}dy\leq c\int_{x}^{2x}w_{0}dy
\]
if additionally (\ref{eq:Condw0}) holds, the right hand side is uniformly
controlled by (\ref{eq:Condw0}). Hence by dominated convergence theorem
\[
\int_{0}^{\infty}w_{0}(x)dx=\lim_{x\rightarrow\infty}\int_{0}^{x}w_{0}dy=0
\]
which is a contradiction since $w_{0}$ is positive a.e.
\end{rem}

The remainder of this subsection is devoted to settle Lemmatta (\ref{lem:Key})
and (\ref{lem:Keypq}). For that purpose we will use the following
notation. Given any interval $I=(a,b)$, we denote 
\[
I_{1}^{n}=\left(a,a+\frac{l_{I}}{n}\right),\qquad I_{2}^{n}=\left(b-\frac{l_{I}}{n},b\right).
\]

\subsubsection{Proof of Lemma \ref{lem:Key} }

We adapt the argument in \cite{HL} to the one-sided setting. Note
that given $\theta\in(0,1)$ we would need to show that $w_{0}\in A_{q_{0}}^{+}$
where
\[
q_{0}(\theta)=\frac{1-\theta}{\frac{1}{q}-\frac{\theta}{q_{1}}}\qquad w^{\frac{q_{0}(\theta)}{q(1-\theta)}}w_{1}^{-\frac{\theta q_{0}(\theta)}{q_{1}(1-\theta)}}=w_{0}(\theta)=w_{0}.
\]
Note that by the definition of $q$ and $q_{1}$ we have that $q_{0}\in(\lambda,\infty)$.
Observe that calling $p_{i}=\frac{q_{i}}{\lambda}$ and $p=\frac{q}{\lambda}$,
the same relations hold, namely
\[
p_{0}=\frac{q_{0}}{\lambda}=\frac{1-\theta}{\frac{\lambda}{q}-\frac{\lambda\theta}{q_{1}}}=\frac{1-\theta}{\frac{1}{p}-\frac{\theta}{p_{1}}}
\]
and hence it suffices to show that
\[
\left[w_{0}\right]_{A_{p_{0}}^{+}}=\sup_{a<x_{1}<b}\left(\frac{1}{|(a,b)|}\int_{a}^{x_{1}}w_{0}\right)\left(\frac{1}{|(a,b)|}\int_{x_{1}}^{b}w_{0}^{-\frac{1}{p_{0}-1}}\right)^{p_{0}-1}<\infty.
\]
The remainder of the argument reduces to settle the following claim
\begin{claim}
\label{claim:2}Given an interval $I$ we have that
\[
\left(\frac{1}{|I_{1}^{4}|}\int_{I_{1}^{4}}w_{0}\right)\left(\frac{1}{|I_{2}^{4}|}\int_{I_{2}^{4}}w_{0}^{-\frac{1}{p_{0}-1}}\right)^{p_{0}-1}\leq[w]_{A_{p}}^{\frac{p_{1}}{p_{1}-\theta p}}[w_{1}]_{A_{p_{1}}}^{\frac{\theta p}{p_{1}-\theta p}}<\infty
\]
with $\theta\in(0,1)$ small enough and independent of $I$. 
\end{claim}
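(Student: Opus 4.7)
The plan is to verify the gap-type inequality asserted by the claim, and then invoke Lemma \ref{lem:Gap} (with $p=q=p_{0}$ and $v=u=w_{0}^{1/p_{0}}$) to conclude that $w_{0}\in A_{p_{0}}^{+}$, which is exactly what Lemma \ref{lem:Key} needs. Concretely, write $w_{0}=w^{\alpha}w_{1}^{-\beta}$ with $\alpha=q_{0}/(q(1-\theta))$ and $\beta=\theta q_{0}/(q_{1}(1-\theta))$; equivalently, $w_{0}^{-1/(p_{0}-1)}=w^{-\alpha/(p_{0}-1)}w_{1}^{\beta/(p_{0}-1)}$. As $\theta\to 0^{+}$ one has $q_{0}\to q$, $p_{0}\to p$, $\alpha\to 1$, $\beta\to 0$, so all four exponents $\alpha$, $\beta$, $\alpha/(p_{0}-1)$, $\beta/(p_{0}-1)$ can be driven arbitrarily close to $1$, $0$, $1/(p-1)$, $0$ respectively by shrinking $\theta$.

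On the left-sided interval $I_{1}^{4}$, apply H\"older with some pair of exponents $(r,r')$ to split $\int_{I_{1}^{4}}w^{\alpha}w_{1}^{-\beta}$ as a product of an $\int w^{\alpha r}$ factor and an $\int w_{1}^{-\beta r'}$ factor. The first is controlled by a power of $\tfrac{1}{|I|}\int_{I}w$ via the reverse H\"older inequality of Lemma \ref{lem:LemRHI} for $w\in A_{p}^{+}$ (which applies precisely on left-sided subintervals of $I$), provided $\alpha r$ stays below the RHI exponent of $w$. For the second, rewrite $w_{1}^{-\beta r'}=(w_{1}^{-1/(p_{1}-1)})^{\beta r'(p_{1}-1)}$ and exploit the standard duality $w_{1}\in A_{p_{1}}^{-}\Leftrightarrow w_{1}^{-1/(p_{1}-1)}\in A_{p_{1}'}^{+}$, so that Lemma \ref{lem:LemRHI} again applies on the left-sided $I_{1}^{4}$, this time to $w_{1}^{-1/(p_{1}-1)}$. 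Symmetrically, the average of $w_{0}^{-1/(p_{0}-1)}=w^{-\alpha/(p_{0}-1)}w_{1}^{\beta/(p_{0}-1)}$ on the right-sided interval $I_{2}^{4}$ is estimated by H\"older together with RHI for $w^{-1/(p-1)}\in A_{p'}^{-}$ (a consequence of $w\in A_{p}^{+}$) and for $w_{1}\in A_{p_{1}}^{-}$ directly.

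Multiplying the two bounds and collecting terms, the product of averages on the left-hand side of the claim is bounded by a product of two pieces: one involving $\bigl(\tfrac{1}{|I|}\int_{I_{1}^{4}}w\bigr)\bigl(\tfrac{1}{|I|}\int_{I_{2}^{4}}w^{-1/(p-1)}\bigr)^{p-1}$ raised to some exponent $A$, and the other the analogous $w_{1}$-quantity raised to exponent $B$. The $A_{p}^{+}$ condition for $w$ (applied to the triple $(a,a+l_{I}/4,a+l_{I})$, say) bounds the first by $[w]_{A_{p}^{+}}^{A}$, and the $A_{p_{1}}^{-}$ condition for $w_{1}$ bounds the second by $[w_{1}]_{A_{p_{1}}^{-}}^{B}$. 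The main obstacle is purely bookkeeping: one has to choose $r$, $r'$, and $\theta$ so that (i) the H\"older exponents lie in $(1,\infty)$, (ii) each of the powers of $w$, $w_{1}$, $w^{-1/(p-1)}$, $w_{1}^{-1/(p_{1}-1)}$ that appears stays strictly inside its reverse H\"older range, and (iii) the exponents $A$ and $B$ collapse to the claimed values $p_{1}/(p_{1}-\theta p)$ and $\theta p/(p_{1}-\theta p)$. Since all RHI exponents depend only on $w$, $w_{1}$, $p$, $p_{1}$, such a choice of sufficiently small $\theta$ (independent of $I$) exists, as the claim asserts.
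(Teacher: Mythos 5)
Your proposal follows the paper's own argument step for step: the same decomposition $w_0=w^{\alpha}w_1^{-\beta}$, the same H\"older splitting on the quarter-intervals $I_1^4,I_2^4$, the same reverse-H\"older step (crucially via the duality $w_1\in A_{p_1}^-\Leftrightarrow w_1^{-1/(p_1-1)}\in A_{p_1'}^+$ so that the left-sided RHI of Lemma \ref{lem:LemRHI} applies to the $w_1$-factor on $I_1^4$, and symmetrically on $I_2^4$), followed by pairing into $[w]_{A_p^+}$ and $[w_1]_{A_{p_1}^-}$, with Lemma \ref{lem:Gap} closing the gap. The only thing you wave off as ``bookkeeping'' is the explicit choice of H\"older parameters, which the paper pins down as $\varepsilon=\theta p/p_1'$ and $\delta=\theta p'/p_1$ so that the four exponents coalesce into two equal pairs $r(\theta)=s(\theta)$, $t(\theta)=u(\theta)$ and the final powers become exactly $p_1/(p_1-\theta p)$ and $\theta p/(p_1-\theta p)$; including that computation would make your sketch complete.
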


Note that the desired result readily follows combining this claim
with a direct application of Lemma \ref{lem:Gap} and hence, it suffices
to settle Claim \ref{claim:2} to end the proof. From this point it
suffices to proceed in \cite[Lemma 4.1]{HL} to our situation. Since
there are some differences in the way we need to apply reverse Hölder
inequality, we provide the full argument for reader's convenience.
For some $\varepsilon,\delta>0$ to be chosen we have that
\begin{align*}
 & \left(\frac{1}{|I_{1}^{4}|}\int_{I_{1}^{4}}w_{0}\right)\left(\frac{1}{|I_{2}^{4}|}\int_{I_{2}^{4}}w_{0}^{-\frac{1}{p_{0}-1}}\right)^{p_{0}-1}\\
 & =\left(\frac{1}{|I_{1}^{4}|}\int_{I_{1}^{4}}w^{\frac{p_{0}}{p(1-\theta)}}w_{1}^{-\frac{p_{0}\theta}{p_{1}(1-\theta)}}\right)\left(\frac{1}{|I_{2}^{4}|}\int_{I_{2}^{4}}w^{-\frac{p'_{0}}{p(1-\theta)}}w_{1}^{\frac{p'_{0}\theta}{p_{1}(1-\theta)}}\right)^{p_{0}-1}\\
 & =\left(\frac{1}{|I_{1}^{4}|}\int_{I_{1}^{4}}w^{\frac{p_{0}}{p(1-\theta)}}\left(w_{1}^{-\frac{1}{p_{1}-1}}\right)^{\frac{p_{0}\theta}{p'_{1}(1-\theta)}}\right)\left(\frac{1}{|I_{2}^{4}|}\int_{I_{2}^{4}}\left(w^{-\frac{1}{p-1}}\right)^{\frac{p'_{0}}{p'(1-\theta)}}w_{1}^{\frac{p'_{0}\theta}{p_{1}(1-\theta)}}\right)^{p_{0}-1}\\
 & \leq\left(\frac{1}{|I_{1}^{4}|}\int_{I_{1}^{4}}w^{\frac{p_{0}(1+\varepsilon)}{p(1-\theta)}}\right)^{\frac{1}{1+\varepsilon}}\left(\frac{1}{|I_{1}^{4}|}\int_{I_{1}^{4}}\left(w_{1}^{-\frac{1}{p_{1}-1}}\right)^{\frac{p_{0}\theta(1+\varepsilon)}{p_{1}^{'}\varepsilon(1-\theta)}}\right)^{\frac{\varepsilon}{1+\varepsilon}}\\
 & \times\left(\frac{1}{|I_{2}^{4}|}\int_{I_{2}^{4}}\left(w^{-\frac{1}{p-1}}\right)^{\frac{p_{0}^{'}(1+\delta)}{p'(1-\theta)}}\right)^{\frac{p_{0}-1}{1+\delta}}\left(\frac{1}{|I_{2}^{4}|}\int_{I_{2}^{4}}w_{1}^{\frac{p_{0}^{'}\theta(1+\delta)}{p_{1}\delta p'(1-\theta)}}\right)^{\delta\frac{p_{0}-1}{1+\delta}}\\
 & =\left(\frac{1}{|I_{1}^{4}|}\int_{I_{1}^{4}}w^{r(\theta)}\right)^{\frac{1}{1+\varepsilon}}\left(\frac{1}{|I_{1}^{4}|}\int_{I_{1}^{4}}\left(w_{1}^{-\frac{1}{p_{1}-1}}\right)^{s(\theta)}\right)^{\frac{\varepsilon}{1+\varepsilon}}\\
 & \times\left(\frac{1}{|I_{2}^{4}|}\int_{I_{2}^{4}}\left(w^{-\frac{1}{p-1}}\right)^{t(\theta)}\right)^{\frac{p_{0}-1}{1+\delta}}\left(\frac{1}{|I_{2}^{4}|}\int_{I_{2}^{4}}w_{1}^{u(\theta)}\right)^{\delta\frac{p_{0}-1}{1+\delta}}=(*).
\end{align*}
where
\begin{align*}
r(\theta) & =\frac{p_{0}(\theta)(1+\varepsilon)}{p(1-\theta)}\qquad s(\theta)=\frac{p_{0}(\theta)\theta(1+\varepsilon)}{p_{1}^{'}\varepsilon(1-\theta)}\\
t(\theta) & =\frac{p_{0}^{'}(\theta)(1+\delta)}{p'(1-\theta)}\qquad u(\theta)=\frac{p'_{0}(\theta)\theta(1+\delta)}{p_{1}\delta(1-\theta)}.
\end{align*}
Choosing $\varepsilon=\frac{\theta p}{p_{1}'}$ and $\delta=\frac{\theta p'}{p_{1}}$
we have that 
\begin{align*}
r(\theta) & =s(\theta)=\frac{p_{0}(\theta)(p'_{1}+\theta p')}{pp'_{1}(1-\theta)},\\
t(\theta) & =u(\theta)=\frac{p_{0}(\theta)(p{}_{1}+\theta p')}{p'p_{1}(1-\theta)}.
\end{align*}
Note that by Lemma \ref{lem:LemRHI} there exists $\gamma_{1}>1$
such that 
\[
\left(\frac{1}{|I_{1}^{4}|}\int_{I_{1}^{4}}\rho^{\gamma_{1}}\right)^{\frac{1}{\gamma_{1}}}\leq4\frac{1}{|I_{1}^{2}|}\int_{I_{1}^{2}}\rho
\]
where $\rho=w$ and $\rho=w_{1}^{-\frac{1}{p_{1}-1}}$ are $A_{\infty}^{+}$
weights and also there exists $\gamma_{2}>1$ such that 
\[
\left(\frac{1}{|I_{2}^{4}|}\int_{I_{2}^{4}}\rho^{\gamma_{1}}\right)^{\frac{1}{\gamma_{1}}}\leq4\frac{1}{|I_{2}^{2}|}\int_{I_{2}^{2}}\rho
\]
where $\rho=w_{1}$ and $\rho=w^{-\frac{1}{p-1}}$ are $A_{\infty}^{-}$
weights. Note that $p_{0}(0)=p$ and that $r(0)=t(0)=1$. Hence, by
continuity it is clear that we can find $\theta\in(0,1)$ small enough
such that 
\[
1<\max\left\{ r(\theta),t(\theta)\right\} \leq\min\{\gamma_{1},\gamma_{2}\}.
\]
For that choice of $\theta$ we can continue the argument as follows.
Applying Lemma \ref{lem:LemRHI} we have that

\begin{align*}
(*) & \lesssim\left(\frac{1}{|I_{1}^{2}|}\int_{I_{1}^{2}}w\right)^{\frac{p_{0}}{p(1-\theta)}}\left(\frac{1}{|I_{1}^{2}|}\int_{I_{1}^{2}}w_{1}^{-\frac{1}{p_{1}-1}}\right)^{\frac{\theta p_{0}}{p_{1}'(1-\theta)}}\\
 & \times\left(\frac{1}{|I_{2}^{2}|}\int_{I_{2}^{2}}w^{-\frac{1}{p-1}}\right)^{\frac{p_{0}^{'}(p_{0}-1)}{p'(1-\theta)}}\left(\frac{1}{|I_{2}^{2}|}\int_{I_{2}^{2}}w_{1}\right)^{\frac{\theta p_{0}'(p_{0}-1)}{p_{1}(1-\theta)}}=(**)
\end{align*}
and finally, rearranging terms,
\begin{align*}
(**) & \lesssim\left(\left(\frac{1}{|I_{1}^{2}|}\int_{I_{1}^{2}}w\right)\left(\frac{1}{|I_{2}^{2}|}\int_{I_{2}^{2}}w^{-\frac{1}{p-1}}\right)^{p-1}\right)^{\frac{p_{0}}{p(1-\theta)}}\\
 & \times\left(\left(\frac{1}{|I_{2}^{2}|}\int_{I_{2}^{2}}w_{1}\right)\left(\frac{1}{|I_{1}^{2}|}\int_{I_{1}^{2}}w_{1}^{-\frac{1}{p_{1}-1}}\right)^{p_{1}-1}\right)^{\frac{\theta p_{0}}{p_{1}(1-\theta)}}\\
 & \leq[w]_{A_{p}^{+}}^{\frac{p_{1}}{p_{1}-\theta p}}[w_{1}]_{A_{p_{1}}^{-}}^{\frac{\theta p}{p_{1}-\theta p}}
\end{align*}
as we wanted to show.

\subsubsection{Proof of Lemma \ref{lem:Keypq}}

Observe that by the choice of $\theta\in(0,1)$ we have that $p_{0}$,
$q_{0}$ and $w_{0}$ are determined as follows
\begin{align*}
p_{0} & =p_{0}(\theta)=\frac{1-\theta}{\frac{1}{p}-\frac{\theta}{p_{1}}}\\
q_{0} & =q_{0}(\theta)=\frac{1-\theta}{\frac{1}{q}-\frac{\theta}{q_{1}}}\\
w_{0} & =w_{0}(\theta)=w^{\frac{1}{1-\theta}}w_{1}^{-\frac{\theta}{1-\theta}}
\end{align*}
and consequently it remains to show that it is possible to choose
$\theta\in(0,1)$ such that $1<p_{0}\leq q_{0}<\infty$ and $w_{0}\in A_{p_{0},q_{0}}^{+}$.
Due to the fact that $1<p_{0}(0)=p\leq q=q_{0}(0)<\infty$, it is
clear that choosing $\theta$ small enough the first condition holds,
and hence we are left with fulfilling the second one. We will follow
the same strategy used in the proof of Lemma \ref{lem:Key}, namely,
given an interval $I=(a,b)$ we are going to show that
\[
\left(\frac{1}{|I_{1}^{4}|}\int_{I_{1}^{4}}w_{0}^{q_{0}}\right)^{\frac{1}{q_{0}}}\left(\frac{1}{|I_{2}^{4}|}\int_{I_{2}^{4}}w_{0}^{-p_{0}'}\right)^{\frac{1}{p_{0}'}}\leq[w]_{A_{p,q}^{+}}^{\frac{1}{1-\theta}}[w]_{A_{p_{1},q_{1}}^{-}}^{\frac{\theta}{1-\theta}}
\]
since relying upon this estimate, a direct application of Lemma \ref{lem:Gap}
allows to derive the desired conclusion.

We argue as follows using Hölder inequality with exponents $1+\varepsilon$
and $1+\delta$ with $\delta,\varepsilon>0$ to be chosen.
\begin{align}
 & \left(\frac{1}{|I_{1}^{4}|}\int_{I_{1}^{4}}w_{0}^{q_{0}}\right)^{\frac{1}{q_{0}}}\left(\frac{1}{|I_{2}^{4}|}\int w_{0}^{-p_{0}'}\right)^{\frac{1}{p_{0}'}}\nonumber \\
= & \left(\frac{1}{|I_{1}^{4}|}\int_{I_{1}^{4}}w^{\frac{q_{0}}{1-\theta}}w_{1}^{-\frac{\theta q_{0}}{1-\theta}}\right)^{\frac{1}{q_{0}}}\left(\frac{1}{|I_{2}^{4}|}\int_{I_{2}^{4}}w^{-p_{0}'\frac{1}{1-\theta}}w_{1}^{p_{0}'\frac{\theta}{1-\theta}}\right)^{\frac{1}{p_{0}'}}\nonumber \\
\leq & \left(\frac{1}{|I_{1}^{4}|}\int_{I_{1}^{4}}w^{\frac{q_{0}(1+\varepsilon)}{1-\theta}}\right)^{\frac{1}{q_{0}(1+\varepsilon)}}\left(\frac{1}{|I_{1}^{4}|}\int_{I_{1}^{4}}w_{1}^{-\frac{\theta q_{0}}{1-\theta}\frac{1+\varepsilon}{\varepsilon}}\right)^{\frac{1}{q_{0}}\frac{\varepsilon}{1+\varepsilon}}\nonumber \\
\times & \left(\frac{1}{|I_{2}^{4}|}\int_{I_{2}^{4}}w^{-p_{0}'\frac{1}{1-\theta}(1+\delta)}\right)^{\frac{1}{p_{0}'(1+\delta)}}\left(\frac{1}{|I_{2}^{4}|}\int_{I_{2}^{4}}w_{1}^{p_{0}'\frac{\theta}{1-\theta}\frac{1+\delta}{\delta}}\right)^{\frac{1}{p_{0}'}\frac{\delta}{1+\delta}}\nonumber \\
= & \left(\frac{1}{|I_{1}^{4}|}\int_{I_{1}^{4}}w^{qr(\theta)}\right)^{\frac{1}{q_{0}(1+\varepsilon)}}\left(\frac{1}{|I_{1}^{4}|}\int_{I_{1}^{4}}w_{1}^{-p_{1}^{'}s(\theta)}\right)^{\frac{1}{q_{0}}\frac{\varepsilon}{1+\varepsilon}}\label{eq:rh1}\\
\times & \left(\frac{1}{|I_{2}^{4}|}\int_{I_{2}^{4}}w^{-p't(\theta)}\right)^{\frac{1}{p_{0}'(1+\delta)}}\left(\frac{1}{|I_{2}^{4}|}\int_{I_{2}^{4}}w_{1}^{q_{1}u(\theta)}\right)^{\frac{1}{p_{0}'}\frac{\delta}{1+\delta}}\label{eq:rh2}
\end{align}
where
\begin{align*}
r(\theta) & =\frac{q_{0}(\theta)(1+\varepsilon)}{q(1-\theta)}\\
s(\theta) & =\frac{1}{p_{1}'}\frac{\theta q_{0}(\theta)}{(1-\theta)}\frac{1+\varepsilon}{\varepsilon}\\
t(\theta) & =\frac{p_{0}'(\theta)}{p'}\frac{1}{1-\theta}(1+\delta)\\
u(\theta) & =\frac{p_{0}'(\theta)}{q_{1}}\frac{\theta}{1-\theta}\frac{1+\delta}{\delta}.
\end{align*}
Note that choosing $\varepsilon=\frac{q\theta}{p'_{1}}$ and $\delta=\frac{\theta p'}{q_{1}}$
we have that 
\begin{align*}
r(\theta)=s(\theta) & =\frac{q_{0}(\theta)(p_{1}'+\theta q)}{qp_{1}'(1-\theta)}\\
t(\theta)=u(\theta) & =\frac{p_{0}'(\theta)(q_{1}+\theta p')}{q_{1}p'(1-\theta)}.
\end{align*}
Now we observe that by Proposition \ref{prop:Apq} $w^{q}\in A_{1+\frac{q}{p'}}^{+}$,
$w^{-p'}\in A_{1+\frac{p'}{q}}^{-}$, $w_{1}^{-p_{1}'}\in A_{1+\frac{p'_{1}}{q_{1}}}^{+}$
and $w_{1}^{q_{1}}\in A_{1+\frac{q_{1}}{p'_{1}}}^{-}$, we have that
$w^{q},w^{-p'_{1}}\in A_{\infty}^{+}$ and $w^{-p'},w_{1}^{q_{1}}\in A_{\infty}^{-}$.
Since for any $\eta>0$, it is always possible to choose $\theta$
close enough to $0$ such that
\[
1<\max\left\{ r(\theta),t(\theta)\right\} \leq1+\eta
\]
making a suitable choice of $\theta$ allows to apply reverse Hölder
inequalities in Lemma \ref{lem:LemRHI} to each term in (\ref{eq:rh1})
and in (\ref{eq:rh2}) to obtain
\begin{align*}
 & \left(\frac{1}{|I_{1}^{4}|}\int_{I_{1}^{4}}w^{qr(\theta)}\right)^{\frac{1}{q_{0}(1+\varepsilon)}}\left(\frac{1}{|I_{1}^{4}|}\int_{I_{1}^{4}}w_{1}^{-p_{1}^{'}s(\theta)}\right)^{\frac{1}{q_{0}}\frac{\varepsilon}{1+\varepsilon}}\\
\times & \left(\frac{1}{|I_{2}^{4}|}\int_{I_{2}^{4}}w^{-p't(\theta)}\right)^{\frac{1}{p_{0}'(1+\delta)}}\left(\frac{1}{|I_{2}^{4}|}\int_{I_{2}^{4}}w_{1}^{q_{1}u(\theta)}\right)^{\frac{1}{p_{0}'}\frac{\delta}{1+\delta}}\\
\lesssim & \left(\frac{1}{|I_{1}^{2}|}\int_{I_{1}^{2}}w^{q}\right)^{\frac{1}{q_{0}(1+\varepsilon)}r(\theta)}\left(\frac{1}{|I_{1}^{2}|}\int_{I_{1}^{2}}w_{1}^{-p_{1}^{'}s(\theta)}\right)^{\frac{1}{q_{0}}\frac{\varepsilon}{1+\varepsilon}r(\theta)}\\
\times & \left(\frac{1}{|I_{2}^{2}|}\int_{I_{2}^{2}}w^{-p'}\right)^{\frac{1}{p_{0}'(1+\delta)}t(\theta)}\left(\frac{1}{|I_{2}^{2}|}\int_{I_{2}^{2}}w_{1}^{q_{1}}\right)^{\frac{1}{p_{0}'}\frac{\delta}{1+\delta}t(\theta)}\\
= & \left(\frac{1}{|I_{1}^{2}|}\int_{I_{1}^{2}}w^{q}\right)^{\frac{1}{q(1-\theta)}}\left(\frac{1}{|I_{2}^{2}|}\int_{I_{2}^{2}}w^{-p'}\right)^{\frac{1}{p'(1-\theta)}}\\
\times & \left(\frac{1}{|I_{2}^{2}|}\int_{I_{2}^{2}}w_{1}^{q_{1}}\right)^{\frac{1}{q_{1}}\frac{\theta}{1-\theta}}\left(\frac{1}{|I_{1}^{2}|}\int_{I_{1}^{2}}w_{1}^{-p_{1}^{'}s(\theta)}\right)^{\frac{1}{p_{1}'}\frac{\theta}{1-\theta}}\\
\leq & [w]_{A_{p,q}^{+}}^{\frac{1}{1-\theta}}[w]_{A_{p_{1},q_{1}}^{-}}^{\frac{\theta}{1-\theta}}
\end{align*}
as we wanted to show.

\section{Applications\label{sec:Applications}}

\subsection{A result on the compactness of one sided Calderón-Zygmund operators}

In \cite[Theorem 2.21]{V} Villarroya showed that a Calderón-Zygmund
operator $T$ extends to a compact operator on $L^{p}(\mathbb{R})$
for $1<p<\infty$ if and only if $T$ is associated with a compact
Calderón-Zygmund kernel \cite[Definition 2.3]{V} and satisfies the
weak compactness condition \cite[Definition 2.12]{V}, and $T(1),T^{*}(1)\in CMO(\mathbb{R})$. 

The aforementioned result combined with Corollary \ref{thm:CorUnWeighted}
leads to the following result.
\begin{cor}
Let $T$ be a Calderón-Zygmund operator associated to a compact Calderón-Zygmund
kernel supported in $\{(x,y)\in\mathbb{R}^{2}\,:\,x<y\}$ such that
$T$ satisfies the weak compactness condition and $T(1),T^{*}(1)\in CMO(\mathbb{R})$.
Then for every $1<p<\infty$, $T$ is compact on $L^{p}(w)$ with
$w\in A_{p}^{+}$.
\end{cor}

\begin{proof}
Note that in \cite{AFMR,CKr} it was shown that a Calderón-Zygmund
with kernel supported on $\{(x,y)\in\mathbb{R}^{2}\,:\,x<y\}$ is
bounded on $L^{p}(w)$ for $w\in A_{p}^{+}$ and every $p\in(1,\infty)$.
This fact combined with Villarroya's \cite[Theorem 2.21]{V}, which
ensures the compactness in the unweighted setting, leads via Corollary
\ref{thm:CorUnWeighted} to the desired conclusion.
\end{proof}
Very recently Mitkovski and Stockdale showed that a Calderón-Zygmund
operator $T$ extends to a compact operator on $L^{2}(\mathbb{R})$
if and only if $T$ is weakly compact \cite[Section 2]{MS}, and $T(1),T^{*}(1)\in CMO(\mathbb{R})$.
Arguing as above we have the following Corollary.
\begin{cor}
Let $T$ be a Calderón-Zygmund operator associated to a compact Calderón-Zygmund
kernel supported in $\{(x,y)\in\mathbb{R}^{2}\,:\,x<y\}$ such that
$T$ is weakly compact and $T(1),T^{*}(1)\in CMO(\mathbb{R})$. Then
for every $1<p<\infty$, $T$ is compact on $L^{p}(w)$ with $w\in A_{p}^{+}$.
\end{cor}

\subsection{Results on commutators}

\subsubsection{Commutators of $L^{r}$- Hörmander operators}

In this section we will work in the $n$-dimensional case in the classical
setting, since the results we present are unknown even that setting.

Before presenting the main result of this section we need the following
definition that we essentially borrow from \cite{BCADH}. We remit
the reader there for further references related to this definition
in the literature.
\begin{defn}
We say that a locally integrable function $K:\{(x,y)\in\mathbb{R}^{n}\,:\,x\not=y\}\rightarrow\mathbb{R}$
is an $L^{r}$-Hörmander kernel if there exists $\gamma>n/r'$ such
that 
\[
\left(\int_{A_{m}(B)}|K(x,y)-K(x',y)|^{r}dy\right)^{\frac{1}{r}}+\left(\int_{A_{m}(B)}|K(y,x)-K(y,x')|^{r}dy\right)^{\frac{1}{r}}\leq C\frac{|x-x'|^{\gamma-\frac{n}{r'}}}{|B|^{\frac{\gamma}{n}}}2^{-m\gamma}
\]
for every ball $B$ and all $x,x'\in\frac{1}{2}B$ where $A_{m}(B)=2^{m}B\setminus2^{m-1}B$
with $m\geq1$, and it also satisfies the size condition
\[
|K(x,y)|\leq C\frac{1}{|x-y|^{n}}.
\]
We say that a linear operator $T$ is an $L^{r}$-Hörmander singular
integral operator operator if $T$ is bounded on $L^{2}$, there exists
an $L^{r}$-Hörmander kernel $K$ such that for every $f\in L_{c}^{\infty}(\mathbb{R}^{n})$,
\[
Tf(x)=\int_{\mathbb{R}^{n}}K(x,y)f(y)dy\qquad x\not\in\text{supp}(f)
\]
and also $\lim_{\varepsilon\rightarrow0^{+}}\int_{\varepsilon<|x-y|<1}K(x,y)dy$
exists.
\end{defn}

Note that under the conditions above, in fact under weaker ones, Grafakos
showed \cite{G} that if $T$ is an $L^{r}$-Hörmander operator with
associated kernel $K$, then the maximal operator
\[
T^{*}f(x)=\sup_{\varepsilon>0}\left|\int_{\varepsilon<|x-y|}K(x,y)f(y)dy\right|
\]
is bounded on $L^{p}$. Consequently if $T$ is an $L^{r}$-Hörmander
operator, we have that 
\[
Tf(x)=\lim_{\varepsilon\rightarrow0^{+}}\int_{\varepsilon<|x-y|}K(x,y)f(y)dy
\]
almost everywhere for $f\in L^{p}$.

Our result of compactness that as we announced in the abstract seems
to be new even in the classical setting is the following.
\begin{thm}
\label{thm:compLrHor}Let $1<r'<p<\infty$. Assume that $K$ is an
$L^{r}$-Hörmander kernel and that $T$ stands for the operator associated
to $K$. If $b\in CMO$, then for every $w\in A_{p/r'}$, $[b,T]$
is compact from $L^{p}(w)$ to $L^{p}(w)$. If $n=1$ and additionally,
$K$ is supported on $\{(x,y)\in\mathbb{R}^{2}\,:\,x<y\}$, then we
have that for every $w\in A_{p/r'}^{+}$, $[b,T]$ is compact on $L^{p}(w)$.
\end{thm}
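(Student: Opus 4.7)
The plan is to deduce both statements from the compactness extrapolation machinery with $\lambda=r'$: the classical statement from Hytönen--Lappas \cite{HL}, and the one-sided statement from Corollary \ref{thm:CorUnWeighted} of the present paper. In each case this reduces the problem to checking two ingredients: (i) boundedness of $[b,T]$ on $L^{p_{0}}(w_{0})$ for every weight $w_{0}$ in the appropriate class ($A_{p_{0}/r'}$ or $A_{p_{0}/r'}^{+}$) at some fixed $p_{0}>r'$, with operator norm dominated by an increasing function of the weight characteristic; and (ii) compactness of $[b,T]$ at some single exponent--weight pair.

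For (ii), the natural and most efficient choice is the unweighted setting. Unweighted $L^{p_{1}}$-compactness of $[b,T]$ with $b\in CMO$ for $L^{r}$-Hörmander operators is already available in the literature --- this is essentially the content of \cite{unW5}. Since a kernel supported on $(-\infty,0)$ is, in particular, a classical $L^{r}$-Hörmander kernel, the same unweighted compactness applies in the one-sided case. This provides $w_{1}\equiv 1\in A_{p_{1}/r'}$ as the compactness point required both by \cite{HL} and by Corollary \ref{thm:CorUnWeighted}.

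For (i), the classical $A_{q/r'}$-boundedness of $[b,T]$ with $b\in BMO$ for $L^{r}$-Hörmander operators is known; it follows from the by-now standard $L^{r}$-sparse (or Orlicz-type) domination for such commutators (see e.g.\ \cite{BCADH}), which also yields the quantitative increasing dependence on $[w_{0}]_{A_{q/r'}}$ needed for extrapolation. In the one-sided case, a kernel supported on $(-\infty,0)$ admits a \emph{one-sided} sparse domination by dyadic intervals pointing to the appropriate side; the commutator can then be controlled pointwise by a one-sided sparse commutator operator whose $A_{q/r'}^{+}$ boundedness is established exactly as in the two-sided situation, but with the one-sided maximal operator $M^{+}$ in place of $M$. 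This is the technical heart of the argument and the main obstacle, since it requires carefully reproducing (in the one-sided dyadic framework) the Orlicz--sparse control underlying the classical weighted bound.

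Once (i) and (ii) are in hand, the proof is purely formal. For the classical statement, apply \cite{HL} with $\lambda=r'$, using (i) with some $p_{0}\in(r',\infty)$ and (ii) at $w_{1}\equiv 1$; this delivers compactness on $L^{p}(w)$ for every $p>r'$ and every $w\in A_{p/r'}$. For the one-sided statement, the hypotheses of Corollary \ref{thm:CorUnWeighted} are matched verbatim with $\lambda=r'$, the one-sided boundedness from (i), and the unweighted compactness from (ii); the corollary then yields compactness on $L^{p}(w)$ for every $p>r'$ and every $w\in A_{p/r'}^{+}$, completing the proof.
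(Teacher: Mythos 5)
Your structural reduction is exactly the one the paper uses: take $\lambda=r'$, feed the classical weighted boundedness into \cite{HL} and the one-sided weighted boundedness into Corollary \ref{thm:CorUnWeighted}, and anchor both at the unweighted compactness point $w_1\equiv 1$. Where your proposal diverges is in how ingredient (ii) is obtained, and here there is a genuine gap. You dispose of the unweighted compactness of $[b,T]$ for $L^{r}$-H\"ormander operators with $b\in CMO$ by asserting it is ``essentially the content of \cite{unW5}.'' The paper explicitly disagrees: it states that the $L^{r}$-H\"ormander compactness result ``seems to be new even in the classical setting'' and is only \emph{related} to \cite{unW5}. Accordingly, the paper does not cite away this step but proves it as a separate result, Theorem \ref{thm:compLrHorUnweighted}, and that proof is the actual technical core of the application: one first reduces to $b\in C_c^1$ using the $BMO$ bound, then introduces a smooth truncation $K^\delta=\varphi^\delta\cdot K$ and proves (Lemma \ref{lem:Trunc}) that $K^\delta$ still satisfies an $L^r$-H\"ormander-type estimate with a possibly worse exponent, shows $[b,T^\delta]\to[b,T]$ in operator norm via a pointwise bound by $\delta\|\nabla b\|_\infty Mf$, and finally verifies the three Fr\'echet--Kolmogorov conditions of Lemma \ref{lem:Yosida} for $[b,T^\delta]$, controlling the translation estimate by splitting into three regions and invoking the maximal truncation $T_*$ and $M_{r'}$. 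None of this is supplied or even sketched in your proposal; replacing it with a citation to a reference that the paper itself says does not contain the result leaves the argument incomplete. Secondarily, you flag the one-sided $A^+_{p_0/r'}$-boundedness as ``the technical heart and main obstacle'' and sketch a one-sided sparse route; this is a sensible idea, but it is not where the paper locates the difficulty --- the paper treats the weighted boundedness as available and concentrates its effort on the unweighted compactness, which is precisely the step you skipped.
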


Theorem \ref{thm:compLrHor} follows from \cite{HL} in the standard
setting, and from Theorem \ref{thm:CorUnWeighted} in the one sided
setting, combined with the following result that we settle in this
paper.
\begin{thm}
\label{thm:compLrHorUnweighted}Let $1<r'<p<\infty$. Let $T$ be
an $L^{r}$-Hörmander operator. If $b\in CMO$, then $[b,T]$ is compact
on $L^{p}$.
\end{thm}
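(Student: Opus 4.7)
The plan is to follow a sequence of three approximations, exploiting that compact operators on $L^{p}$ form a norm-closed subspace of the bounded operators. \emph{First}, since $CMO$ is by definition the $BMO$-closure of $C_{c}^{\infty}(\mathbb{R})$, I would choose $b_{k}\in C_{c}^{\infty}$ with $b_{k}\to b$ in $BMO$. The $L^{p}$ boundedness of $[\beta,T]$ for $\beta\in BMO$ and $p>r'$, with operator norm dominated by $\|\beta\|_{BMO}$, is a standard consequence of the $L^{r}$-H\"ormander condition (either by sparse domination or by direct arguments). Hence $\|[b-b_{k},T]\|_{L^{p}\to L^{p}}\lesssim\|b-b_{k}\|_{BMO}\to 0$, and it suffices to prove the compactness of $[b,T]$ for $b\in C_{c}^{\infty}(\mathbb{R})$.

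\emph{Second}, fix $b\in C_{c}^{\infty}$ and pick a smooth cut-off $\phi\in C^{\infty}([0,\infty))$ with $\phi\equiv 1$ on $[0,1/2]$ and $\phi\equiv 0$ on $[1,\infty)$. Let $T_{\eta}$ be the operator with truncated kernel $K_{\eta}(x,y):=K(x,y)(1-\phi(|x-y|/\eta))$. I would show that $\|[b,T]-[b,T_{\eta}]\|_{L^{p}\to L^{p}}=O(\eta)$ as $\eta\to 0^{+}$. The kernel of $[b,T-T_{\eta}]$ is supported in $\{|x-y|\leq\eta\}$, where the Lipschitz bound $|b(x)-b(y)|\leq\|\nabla b\|_{\infty}|x-y|$ absorbs exactly one power of $|x-y|^{-n}$ from the size condition. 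A standard dyadic decomposition then gives
\[
\bigl|[b,T-T_{\eta}]f(x)\bigr|\leq C\,\|\nabla b\|_{\infty}\int_{|x-y|\leq\eta}\frac{|f(y)|}{|x-y|^{n-1}}\,dy\leq C\,\eta\, Mf(x),
\]
where $M$ is the Hardy--Littlewood maximal operator, which is bounded on $L^{p}$.

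\emph{Third}, for each $\eta>0$ I would show $[b,T_{\eta}]=M_{b}T_{\eta}-T_{\eta}M_{b}$ is compact on $L^{p}$ by proving that each term is a norm-limit of finite rank operators. The kernel $H(x,y)=b(x)K_{\eta}(x,y)$ of $M_{b}T_{\eta}$ is bounded by $C\|b\|_{\infty}/|x-y|^{n}$ on $\supp(b)\times\{|x-y|>\eta/2\}$ and vanishes elsewhere, so the further truncation $H(x,y)\chi_{|y|\leq N}$ is bounded with bounded support and defines a compact operator on $L^{p}$ (approximable in operator norm by rank-one pieces of the form $\chi_{E_{i}}(x)\int_{F_{j}}f$). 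The tail $H(x,y)\chi_{|y|>N}$ has operator norm controlled by $\bigl\|\|H(x,\cdot)\chi_{|y|>N}\|_{L^{p'}_{y}}\bigr\|_{L^{p}_{x}}$, which tends to zero as $N\to\infty$ because $|x-y|\gtrsim|y|$ for $x\in\supp(b)$ and $|y|$ large, producing an integrable tail in $y$. A symmetric argument handles $T_{\eta}M_{b}$, and by linearity $[b,T_{\eta}]$ is compact; passing to the limit $\eta\to 0$ via Step~2 then yields compactness of $[b,T]$.

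The main obstacle lies in the third step: since $T$ is not smoothing, compactness cannot come from any regularity gain, but must arise from the interplay between multiplying by $b$, which confines one variable to $\supp(b)$, and the off-diagonal decay of $K$, which provides integrability in the other variable in a suitable mixed-norm space. Step two is also delicate, as it is precisely the commutator cancellation $|b(x)-b(y)|\leq\|\nabla b\|_{\infty}|x-y|$ that converts the non-integrable diagonal singularity $|x-y|^{-n}$ into the locally integrable $|x-y|^{-(n-1)}$, making the truncation error of order $\eta$ in the $L^{p}$ operator norm.
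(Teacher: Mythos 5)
Your first two steps (reduction to $b\in C_c^\infty$ via the $BMO$-norm continuity of $b\mapsto[b,T]$, then replacing $T$ by the smoothly truncated $T_\eta$ with error $O(\eta)\cdot Mf$ thanks to the Lipschitz cancellation $|b(x)-b(y)|\lesssim|x-y|$) coincide with the paper's argument. Your third step, however, departs from the paper in a genuine way and is correct. The paper proves compactness of $[b,T^\delta]$ by verifying the three conditions of the Fr\'echet--Kolmogorov criterion (Lemma~\ref{lem:Yosida}): this forces a translation-equicontinuity estimate that requires the maximal truncated operator $T_*$ for one piece and, for the piece $R_1$, a transfer of the $L^r$-H\"ormander regularity to the truncated kernel $K^\delta$ (Lemma~\ref{lem:Trunc}), which is the technical heart of the paper's argument. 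You instead split $[b,T_\eta]=M_bT_\eta-T_\eta M_b$ and show that each summand is an integral operator whose kernel $b(x)K_\eta(x,y)$ (resp.\ $K_\eta(x,y)b(y)$) lies in the Hille--Tamarkin mixed-norm space $L^p_x(L^{p'}_y)$, using only the compact support of $b$ and the size bound $|K_\eta(x,y)|\lesssim|x-y|^{-n}\chi_{\{|x-y|>\eta/2\}}$. Since $\|S\|_{L^p\to L^p}\le\|H\|_{L^p_x(L^{p'}_y)}$ by Minkowski and H\"older, and since finite-rank tensor kernels are dense in this mixed-norm space, each term is a norm limit of finite-rank operators, hence compact, with no use of the $L^r$-H\"ormander regularity at this stage. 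This buys you a noticeably shorter proof: you bypass Lemma~\ref{lem:Trunc}, the translation-equicontinuity analysis with its three regions $R_1,R_2,R_3$, and the boundedness of $T_*$. Two small points you should make explicit: (i) the claim that a bounded, compactly-supported kernel gives a compact operator on $L^p$ is best justified precisely via the $L^p_x(L^{p'}_y)$ estimate rather than the vague ``approximable by rank-one pieces''; and (ii) the representation $[b,T-T_\eta]f(x)=\int(b(x)-b(y))K(x,y)\phi(|x-y|/\eta)f(y)\,dy$ as an absolutely convergent integral should be noted (the commutator cancellation is what makes the integrand integrable at the diagonal), which the paper handles by first truncating at $\varepsilon$ and letting $\varepsilon\to0$.
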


We will present both the proof of Theorem \ref{thm:compLrHorUnweighted}
and of Theorem (\ref{thm:compLrHor}) in the next subsection.

\subsubsection{Commutators of one sided fractional integrals}

We recall that given $\alpha\in(0,1)$ the one sided fractional integral
$I_{\alpha}^{+}$ is defined as
\[
I_{\alpha}^{+}f(x)=\int_{x}^{\infty}\frac{f(y)}{(y-x)^{1-\alpha}}dy.
\]
Our result deals with the compactness of the commutator of this operator.
The statement is the following.
\begin{thm}
\label{thm:compFrac}Let $1<p<q<\infty$ and $\alpha\in(0,1)$ such
that $\frac{1}{p}-\frac{1}{q}=\alpha$. If $b\in CMO$ and $w\in A_{p}^{+}$
then $[b,I_{\alpha}^{+}]$ is compact from $L^{p}(w)$ to $L^{q}(w)$. 
\end{thm}

The theorem above readily follows from Theorem \ref{thm:Extrapolationpq}
combined with the following result that is a consequence of \cite[Theorem 1.5]{GWY}
as we shall show.
\begin{thm}
\label{thm:CompactFracUnw}Let $1<p<q<\infty$ and $\alpha\in(0,1)$
such that $\frac{1}{p}-\frac{1}{q}=\alpha$. If $b\in CMO$ and $w\in A_{p}^{+}$
then $[b,I_{\alpha}^{+}]$ is compact from $L^{p}$ to $L^{q}$. 
\end{thm}

We give the proofs of these results in the next subsection.

\subsection{Proofs of the results on commutators}

\subsubsection{Proof of Theorem \ref{thm:compLrHorUnweighted}}

A result that will be useful for our purposes appears in \cite{Y}.
\begin{lem}
\label{lem:Yosida}Let $1\leq p<\infty$. A subset $H\subset L^{p}(\mathbb{R}^{n})$
is precompact if and only if the following conditions hold.
\begin{enumerate}
\item $H$ is bounded in $L^{p}(\mathbb{R})$
\item $\lim_{h\rightarrow0}\int_{\mathbb{R}}|f(x+h)-f(x)|^{p}dx=0$ uniformly
in $H$, 
\item $\lim_{M\rightarrow0}\int_{|x|>M}|f|^{p}=0$ uniformly in $H$,
\end{enumerate}
\end{lem}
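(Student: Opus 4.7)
The statement is the classical Kolmogorov--Riesz--Fr\'echet compactness criterion in $L^{p}(\mathbb{R}^{n})$, and the plan is to prove the two implications separately.

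For the necessity direction, condition (1) is immediate since any precompact subset of a metric space is bounded. For (2) and (3), the basic ingredients are continuity of translation $h\mapsto\tau_{h}f$ in $L^{p}$ (which follows from density of $C_{c}(\mathbb{R}^{n})$ in $L^{p}$) and $\lim_{M\to\infty}\|f\chi_{|x|>M}\|_{L^{p}}=0$ by dominated convergence, each holding for a fixed $f$. A standard $\varepsilon/3$-trick then upgrades these pointwise-in-$f$ statements to uniform-in-$H$ statements: extract a finite $\varepsilon$-net $\{f_{1},\dots,f_{N}\}\subset H$ from total boundedness, choose $\delta$ (resp.\ $M$) that works simultaneously for those finitely many $f_{i}$, and transfer to an arbitrary $f\in H$ via the triangle inequality.

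For the sufficiency direction, the plan is to combine mollification with the Arzel\`a--Ascoli theorem. Fix a standard mollifier $\phi\in C_{c}^{\infty}(\mathbb{R}^{n})$ with $\int\phi=1$ and set $\phi_{\delta}(x)=\delta^{-n}\phi(x/\delta)$. The key steps would be: (a) writing
\[
(f*\phi_{\delta})(x)-f(x)=\int\phi(y)\bigl[f(x-\delta y)-f(x)\bigr]\,dy
\]
and applying Minkowski's integral inequality to obtain
\[
\|f*\phi_{\delta}-f\|_{L^{p}}\le\int\phi(y)\,\|\tau_{\delta y}f-f\|_{L^{p}}\,dy,
\]
which by (2) and the compact support of $\phi$ tends to $0$ as $\delta\to 0$ uniformly in $f\in H$; (b) for each fixed $\delta$ the family $\{f*\phi_{\delta}:f\in H\}$ is uniformly bounded and equicontinuous on any cube $Q_{M}=[-M,M]^{n}$, by Young's inequality and smoothness of $\phi_{\delta}$; (c) condition (3) controls tails outside $Q_{M}$ uniformly in $H$. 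Given $\varepsilon>0$, I would choose $M$ by (c), then $\delta$ by (a), apply Arzel\`a--Ascoli on $Q_{M}$ to $\{f*\phi_{\delta}\}$ to extract a finite family realizing uniform convergence (which on the bounded cube implies $L^{p}$ convergence), and paste with the tail estimate to produce a finite $\varepsilon$-net for $H$ in $L^{p}(\mathbb{R}^{n})$.

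The main obstacle is step (a): showing that mollification approximates the identity \emph{uniformly} over $H$ rather than merely pointwise in $f$. This is precisely where hypothesis (2) is indispensable --- without it the Minkowski estimate above still holds, but there is no way to make the right-hand side small uniformly in $f\in H$. Everything else, namely the Arzel\`a--Ascoli application on $Q_{M}$ and the tail truncation via (3), is routine once (a) is in place.
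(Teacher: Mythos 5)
The paper does not supply a proof of Lemma~\ref{lem:Yosida}: it simply cites it as the classical Kolmogorov--Riesz--Fr\'echet compactness criterion, attributing it to Yosida \cite{Y}, so there is no in-text argument of the authors' to compare your attempt against. (Incidentally, the paper's statement has two typos you have silently and correctly repaired: the limit in condition (3) should be $M\to\infty$, and the integrals should be over $\mathbb{R}^{n}$, not $\mathbb{R}$.)

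Your sketch is the standard proof and is correct. For necessity, the $\varepsilon/3$ argument using a finite $\varepsilon$-net, together with the translation-invariance of the $L^{p}$ norm for (2) and the pointwise facts for a single $f$, is exactly right. For sufficiency, the three ingredients --- (a) $\|f*\phi_{\delta}-f\|_{L^{p}}\to 0$ uniformly in $H$ via Minkowski's integral inequality and condition (2); (b) uniform boundedness and equicontinuity of $\{f*\phi_{\delta}:f\in H\}$ on $Q_{M}$ (this step uses condition (1) through Young's inequality, worth making explicit); (c) the tail cut-off from condition (3) --- combine in the usual way: pick $M$ by (c), then $\delta$ by (a), apply Arzel\`a--Ascoli on $Q_{M}$, convert a finite sup-norm net on $Q_{M}$ into an $L^{p}(Q_{M})$ net using $|Q_{M}|^{1/p}$, and paste with the tail bound to produce a finite $\varepsilon$-net for $H$ in $L^{p}(\mathbb{R}^{n})$. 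One small remark: for $p=1$ the equicontinuity in step (b) cannot be read off from continuity of translations in $L^{p'}=L^{\infty}$ in the abstract; one should instead invoke the uniform continuity of the fixed compactly supported function $\phi_{\delta}$ (or bound $\nabla(f*\phi_{\delta})=f*\nabla\phi_{\delta}$ by Young's inequality), which is what you indicate when you cite smoothness of $\phi_{\delta}$. With that clarification the argument is complete.
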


Having the result above at our disposal, before getting into the proof
of Theorem \ref{thm:compLrHorUnweighted} we need some preparations. 

Let $\varphi:\mathbb{R}^{n}\rightarrow[0,1]$ be a differentiable
radial function function such that 
\begin{align*}
\supp(\varphi^{\delta}) & \subset\{|x|>\delta\}\\
\varphi^{\delta}(x) & =1\qquad|x|\geq2\delta\\
\|\varphi^{\delta}\|_{L^{\infty}} & =1\\
\|\nabla\varphi^{\delta}\|_{L^{\infty}} & \simeq\frac{1}{\delta}
\end{align*}
Assume that $K$ satisfies and $L^{r}$-Hörmander condition. For each
$\delta\in(0,1)$ we shall call $K^{\delta}(x,y)=\varphi^{\delta}(x-y)K(x,y)$.
In the following lines we study the $L^{r}$-Hörmander conditions
for $K^{\delta}$. 
\begin{lem}
\label{lem:Trunc}Let $r>1$ and $\delta\in(0,1)$. If $K$ satisfies
the $L^{r}$-Hörmander condition for some $\gamma>n/r'$, then 
\[
|K^{\delta}(x,y)|\leq C\frac{1}{|x-y|^{n}}\qquad x\not=y,
\]
and also
\begin{align*}
 & \left(\int_{A_{m}(B)}|K^{\delta}(x,y)-K^{\delta}(x',y)|^{r}dy\right)^{\frac{1}{r}}+\left(\int_{A_{m}(B)}|K^{\delta}(y,x)-K^{\delta}(y,x')|^{r}dy\right)^{\frac{1}{r}}\\
 & \leq C\max\left\{ \frac{|x-x'|^{\gamma-\frac{n}{r'}}}{|B|^{\frac{\gamma}{n}}}2^{-m\gamma},\frac{|x-x'|^{(1+\frac{n}{r'})-\frac{n}{r'}}}{|B|^{(1+\frac{n}{r'})\frac{1}{n}}}2^{-(1+\frac{n}{r'})m}\right\} 
\end{align*}
for every ball $B$ and all $x,x'\in\frac{1}{2}B$ where $A_{m}(B)=2^{m}B\setminus2^{m-1}B$
if $m\geq1$.
\end{lem}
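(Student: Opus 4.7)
The plan is to reduce both assertions to the hypotheses on $K$ together with the explicit properties of the cut-off $\varphi^\delta$. The second inequality in the statement presumably has a typographical error: the left-hand side should read $|K^\delta(x,y)-K^\delta(x',y)|$, because otherwise the first entry of the maximum is just the $L^r$-H\"ormander hypothesis on $K$ and there is nothing to prove. Accepting this reading, the size bound is immediate: $|K^\delta(x,y)|=|\varphi^\delta(x-y)|\,|K(x,y)|\le C|x-y|^{-n}$ since $|\varphi^\delta|\le 1$.

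For the regularity bound I would split
\[
K^\delta(x,y)-K^\delta(x',y)=\varphi^\delta(x-y)\bigl[K(x,y)-K(x',y)\bigr]+\bigl[\varphi^\delta(x-y)-\varphi^\delta(x'-y)\bigr]K(x',y)
\]
and handle the two pieces separately. The first piece yields the first entry of the maximum: because $|\varphi^\delta|\le 1$, its $L^r(A_m(B),dy)$ norm is controlled directly by the $L^r$-H\"ormander constant of $K$ with exponent $\gamma$.

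The second piece produces the second entry. Applying the mean value theorem,
\[
|\varphi^\delta(x-y)-\varphi^\delta(x'-y)|\le |x-x'|\sup_{t\in[0,1]}|\nabla\varphi^\delta(x_t-y)|,\qquad x_t=x'+t(x-x').
\]
One has $|\nabla\varphi^\delta|\lesssim \delta^{-1}$ with $\nabla\varphi^\delta$ supported in $\{\delta\le|u|\le 2\delta\}$. For $x,x'\in\tfrac{1}{2}B$ and $y\in A_m(B)$ one checks $|x_t-y|\simeq 2^m r(B)$, so the support restriction forces the integrand to vanish unless $\delta\simeq 2^m r(B)$. In this sole non-trivial regime we replace $\delta^{-1}$ by $(2^m r(B))^{-1}$, combine with the size bound $|K(x',y)|\lesssim (2^m r(B))^{-n}$, integrate over $A_m(B)$ (whose measure is $\simeq (2^m r(B))^n$), and take $r$-th roots; the arithmetic identity $n-r(n+1)=-r(1+n/r')$ then gives
\[
|x-x'|\cdot (2^m r(B))^{-(1+n/r')}=|x-x'||B|^{-(1+n/r')/n}2^{-m(1+n/r')},
\]
the second entry of the maximum.

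The main obstacle is the geometric book-keeping that identifies $\delta\simeq 2^m r(B)$ as the only regime in which the second summand contributes. Without this observation one only obtains the naive $\delta$-dependent pointwise bound coming from $\|\nabla\varphi^\delta\|_\infty\lesssim \delta^{-1}$; the trade of $\delta^{-1}$ for $(2^m r(B))^{-1}$ is precisely what makes the conclusion $\delta$-free and puts the estimate in the form stated.
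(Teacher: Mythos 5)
Your proof is correct, and you are right that the second display in the statement contains a typo: the left-hand side should read $|K^\delta(x,y)-K^\delta(x',y)|$, which is what the paper actually estimates. Your route differs from the paper's in the choice of decomposition. You split the integrand algebraically via the product-rule identity
\[
K^\delta(x,y)-K^\delta(x',y)=\varphi^\delta(x-y)\bigl[K(x,y)-K(x',y)\bigr]+\bigl[\varphi^\delta(x-y)-\varphi^\delta(x'-y)\bigr]K(x',y),
\]
whereas the paper splits the domain of integration $A_m(B)$ into regions $C_1^m,C_2^m,C_3^m$ according to whether $|x-y|$ and $|x'-y|$ exceed $\delta$. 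On the mixed regions (one distance below $\delta$, the other above) the two arguments essentially coincide: one truncated kernel vanishes, the remaining term is rewritten as $K(x',y)$ times a difference of $\varphi^\delta$ values, estimated via $\|(\varphi^\delta)'\|_{L^\infty}\lesssim\delta^{-1}$, and both you and the paper then make the crucial geometric observation that the support constraint on the cutoff (resp.\ the condition $|x-y|<\delta$ with $|x-y|\simeq 2^m r(B)$) forces $\delta\simeq 2^m r(B)$, converting $\delta^{-1}$ into $(2^m r(B))^{-1}$ and producing a $\delta$-free bound. Your algebraic split is arguably cleaner: on the paper's region $C_1^m$ (both $|x-y|,|x'-y|>\delta$), the paper asserts the pointwise bound $|K^\delta(x,y)-K^\delta(x',y)|\le 2|K(x,y)-K(x',y)|$, which is not immediate when one or both arguments fall in the transition zone $(\delta,2\delta)$ where $\varphi^\delta$ takes intermediate values, whereas your decomposition handles that contribution automatically by feeding it into the second term, so no separate casework on $C_1^m$ is needed.
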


\begin{proof}
It is trivial that $K^{\delta}$ satisfies the size condition. For
the smoothness condition it suffices to work with just one of the
terms, since the condition is symmetric.
\[
\left(\int_{A_{m}(B)}|K^{\delta}(x,y)-K^{\delta}(x',y)|^{r}dy\right)^{\frac{1}{r}}\leq\sum_{i=1}^{3}\left(\int_{C_{j}^{m}(B)}|K^{\delta}(x,y)-K^{\delta}(x',y)|^{r}dy\right)^{\frac{1}{r}}
\]
where 
\begin{align*}
C_{1}^{m} & =A_{m}(B)\cap\{|x-y|>2\delta\}\cap\{|x'-y|>2\delta\}\\
C_{2}^{m} & =A_{m}(B)\cap\{|x-y|<2\delta\}\cap\{|x'-y|>2\delta\}\\
C_{3}^{m} & =A_{m}(B)\cap\{|x-y|>2\delta\}\cap\{|x'-y|<2\delta\}
\end{align*}
We begin observing that 
\[
\left(\int_{C_{1}^{m}(B)}|K^{\delta}(x,y)-K^{\delta}(x',y)|^{r}dy\right)^{\frac{1}{r}}\leq\left(\int_{A_{m}(B)}|K(x,y)-K(x',y)|^{r}dx\right)^{\frac{1}{r}}
\]
and hence the desired conclusion holds for this term. Since $C_{2}^{m}$
and $C_{3}^{m}$ are symmetric, hence it suffices to deal with $C_{2}^{m}$.
We argue as follows. Note that 
\begin{align*}
\left(\int_{C_{2}^{m}}|K^{\delta}(x,y)-K^{\delta}(x',y)|^{r}dy\right)^{\frac{1}{r}} & =\left(\int_{C_{2}^{m}}|K(x,y)\varphi^{\delta}(x-y)-K(x',y)\varphi^{\delta}(x'-y)|^{r}dy\right)^{\frac{1}{r}}\\
 & \leq\left(\int_{C_{2}^{m}}|K(x,y)\varphi^{\delta}(x-y)-K(x',y)\varphi^{\delta}(x-y)|^{r}dy\right)^{\frac{1}{r}}\\
 & +\left(\int_{C_{2}^{m}}|K(x',y)(\varphi^{\delta}(x'-y)-\varphi^{\delta}(x-y))|^{r}dy\right)^{\frac{1}{r}}\\
 & =I+II
\end{align*}
For $I$ since $\|\varphi\|_{L^{\infty}}\leq1$, 
\[
I\leq\left(\int_{A_{m}(B)}|K(x,y)-K(x',y)|^{r}dy\right)^{\frac{1}{r}}
\]
and we are done using $L^{r}$-Hörmander condition for $K$. 

For $II$ we have that 
\begin{align*}
II & \leq\left(2^{r}|x-x'|^{r}\|\nabla\varphi\|_{L^{\infty}}^{r}\int_{C_{2}^{m}}|K(x',y)|^{r}dy\right)^{\frac{1}{r}}\\
 & \leq C|x-x'|\left(\frac{1}{\delta^{r}}\int_{C_{2}^{m}}|K(x',y)|^{r}dy\right)^{\frac{1}{r}}\\
 & \leq C|x-x'|\left(\frac{1}{\delta^{r}}\int_{C_{2}^{m}}\frac{1}{|x'-y|^{nr}}dy\right)^{\frac{1}{r}}
\end{align*}
For $y\in C_{2}^{m}$ we have that 
\[
\frac{1}{\delta}<\frac{2}{|x-y|}
\]
and also for every $z\in\frac{1}{2}B$ and $y\in A_{m}(B)$, if we
denote $c_{B}$ the center of the ball $B$, 
\[
\frac{1}{|z-y|}\leq\frac{2}{|c_{B}-y|}.
\]
Hence, calling $r_{B}$ the radius of the ball $B$, 
\begin{align*}
II & \leq C|x-x'|\left(\int_{C_{2}^{m}}\frac{1}{|c_{B}-y|^{(n+1)r}}dy\right)^{\frac{1}{r}}\leq C|x-x'|\left(\int_{A_{m}(B)}\frac{1}{|c_{B}-y|^{(n+1)r}}dy\right)^{\frac{1}{r}}\\
 & =C|x-x'|\left(\int_{2^{m-1}r_{B}}^{2^{m}r_{B}}\frac{1}{\rho^{(n+1)r}}\rho^{n-1}d\rho\right)^{\frac{1}{r}}=C|x-x'|\left(\left[\frac{\rho^{n-(n+1)r}}{n-(n+1)r}\right]_{2^{m-1}r_{B}}^{2^{m}r_{B}}\right)^{\frac{1}{r}}\\
 & \leq C|x-x'|(2^{m}r_{B})^{\frac{n}{r}-(n+1)}=|x-x'|(2^{m}r_{B})^{-\frac{n}{r'}-1}\\
 & =\frac{|x-x'|}{2}r_{B}{}^{-\frac{n}{r'}}2^{-m}r_{B}^{-1}2^{-\frac{n}{r'}m}\simeq\frac{|x-x'|^{(1+\frac{n}{r'})-\frac{n}{r'}}}{|B|^{(1+\frac{n}{r'})\frac{1}{n}}}2^{-(1+\frac{n}{r'})m}.
\end{align*}
and hence we are done.
\end{proof}
Armed with the lemma above we can present our proof of Theorem \ref{thm:compLrHor}.

We begin with some reductions. Note that since for every $p>r'$ 
\[
\|[b,T]f\|_{L^{p}}\leq C\|b\|_{BMO}\|f\|_{L^{p}}
\]
(see for instance \cite{LMRdT}) we have that if $b\in CMO$ then
we can approximate $b$ by functions $b_{j}\in C_{c}^{\infty}$ in
the $BMO$ norm and consequently
\[
\|[b,T]f-[b_{j},T]f\|_{L^{p}}=\|[b-b_{j},T]f\|_{L^{p}}\lesssim\|b-b_{j}\|_{BMO}\|f\|_{L^{p}}.
\]
In particular $[b_{j},T]\rightarrow[b,T]$ in the $L^{p}$norm. Consequently
it suffices to prove the compactness for the commutator with smooth
symbol. Now we shall consider $T^{\delta}$ the operator associated
to the truncation defined as above with $\delta\in(0,1)$.
\begin{lem}
Let $b\in C_{c}^{1}(\mathbb{R}^{n})$. There exists a constant $C>0$
such that 
\begin{equation}
|[b,T]f(x)-[b,T^{\delta}]f(x)|\leq C\delta\|\nabla b\|_{L^{\infty}}Mf(x)\qquad\text{a.e. for every }\delta>0.\label{eq:Lemma3.10}
\end{equation}
Consequently, for every $p>1$, 
\[
\lim_{\delta\rightarrow0}\|[b,T]f(x)-[b,T^{\delta}]f(x)\|_{L^{p}}=0.
\]
\end{lem}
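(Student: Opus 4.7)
The plan is to collapse the difference into a single integral with integrand supported near the diagonal, and then dominate what remains pointwise by the Hardy--Littlewood maximal function. Writing out the two commutators explicitly and cancelling the term involving $b(x)$ yields
\[
[b,T]f(x)-[b,T^{\delta}]f(x)=\int_{\mathbb{R}^{n}}(b(x)-b(y))\bigl(K(x,y)-K^{\delta}(x,y)\bigr)f(y)\,dy.
\]
By construction $K(x,y)-K^{\delta}(x,y)=(1-\varphi^{\delta}(x-y))K(x,y)$, which vanishes as soon as $|x-y|\geq 2\delta$ and satisfies $|1-\varphi^{\delta}|\leq 1$ elsewhere. Thus the integration is effectively restricted to the thin strip $\{|x-y|<2\delta\}$, where the factor $b(x)-b(y)$ is what tames the singularity of $K$.

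On that strip I would combine two bounds: the mean value theorem applied to $b\in C_{c}^{1}(\mathbb{R}^{n})$ gives $|b(x)-b(y)|\leq\|\nabla b\|_{L^{\infty}}|x-y|$, and the size condition built into the $L^{r}$-H\"ormander hypothesis gives $|K(x,y)|\leq C_{0}|x-y|^{-n}$. Combining them yields
\[
|[b,T]f(x)-[b,T^{\delta}]f(x)|\leq C_{0}\|\nabla b\|_{L^{\infty}}\int_{|x-y|<2\delta}\frac{|f(y)|}{|x-y|^{n-1}}\,dy.
\]
Decomposing the ball $\{|x-y|<2\delta\}$ into dyadic annuli of the form $\{2^{-k-1}\cdot 2\delta\leq|x-y|<2^{-k}\cdot 2\delta\}$ with $k\geq 0$ and using that each annular average of $|f|$ is controlled by $Mf(x)$, one sees that each annular contribution is at most $c_{n}2^{-k}\delta Mf(x)$, and summing the geometric series gives the desired pointwise bound $\lesssim\delta Mf(x)$ with a constant depending only on $n$ and $C_{0}$.

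The $L^{p}$-statement is then essentially free. Integrating the pointwise estimate and using that the Hardy--Littlewood maximal operator is bounded on $L^{p}$ for $p>1$ gives
\[
\|[b,T]f-[b,T^{\delta}]f\|_{L^{p}}\lesssim\delta\|\nabla b\|_{L^{\infty}}\|Mf\|_{L^{p}}\lesssim\delta\|\nabla b\|_{L^{\infty}}\|f\|_{L^{p}},
\]
which tends to zero as $\delta\to 0^{+}$. I do not anticipate any genuine obstacle; the only routine point to check is the legitimacy of the single-integral formula for the difference, but since the $b(x)-b(y)$ factor already renders that integrand absolutely convergent for a.e.\ $x$ (even though $K-K^{\delta}$ itself is not integrable near the diagonal), this is immediate.
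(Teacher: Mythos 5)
Your proposal is correct and essentially mirrors the paper's argument: both control the difference pointwise by $\|\nabla b\|_{L^\infty}\,\delta\, Mf(x)$ using the Lipschitz estimate $|b(x)-b(y)|\le\|\nabla b\|_{L^\infty}|x-y|$, the size bound $|K(x,y)|\lesssim|x-y|^{-n}$, and a dyadic-annulus summation, then conclude via the $L^p$-boundedness of $M$. The only cosmetic difference is that the paper first passes through the truncations $T_\varepsilon$ (letting $\varepsilon\to0$ at the end) and splits the resulting integral into two pieces $I_1,I_2$ over concentric annuli, whereas you collapse the difference directly into a single integral over $\{|x-y|<2\delta\}$ using that $K-K^{\delta}=(1-\varphi^{\delta}(x-y))K(x,y)$ vanishes for $|x-y|\ge 2\delta$.
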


\begin{proof}
Let $f\in L^{p}.$ Let 
\[
[b,T_{\varepsilon}]f(x)=\int_{\varepsilon<|x-y|}(b(x)-b(y))K(x,y)f(y)dy
\]
First we would like to note that 
\begin{equation}
[b,T]f(x)=\lim_{\varepsilon\rightarrow0^{+}}[b,T_{\varepsilon}]f(x)\qquad\text{a.e.}\label{eq:bTae}
\end{equation}
since the corresponding result holds for $T_{\varepsilon}$ and $T$.
Let
\[
T_{\varepsilon}^{\delta}f(x)=\int_{\varepsilon<|x-y|}K^{\delta}(x,y)f(y)dy
\]
Note that if $\varepsilon>2\delta$
\[
|T_{\varepsilon}^{\delta}f(x)-T_{\varepsilon}f(x)|=0.
\]
If $0<\varepsilon<2\delta$ then 
\begin{align*}
|T_{\varepsilon}^{\delta}f(x)-T_{\varepsilon}f(x)| & =\left|\int_{\varepsilon<|x-y|<2\delta}K(x,y)f(y)dy\right.\\
 & -\left.\int_{\delta<|x-y|<2\delta}K^{\delta}(x,y)f(y)dy\right|\\
 & \leq|I(x)|+|II(x)|
\end{align*}
For $I$ we note that
\begin{align*}
|I(x)| & \leq\left|\int_{\varepsilon<|x-y|}K(x,y)f(y)dy\right|+\left|\int_{2\delta<|x-y|}K(x,y)f(y)dy\right|\\
 & \leq2\sup_{\rho>0}|T_{\rho}(f)(x)|
\end{align*}
For $II$,
\begin{align*}
|II(x)| & \leq\int_{\delta<|x-y|<2\delta}|K^{\delta}(x,y)||f(y)|dy\\
 & \leq\int_{\delta<|x-y|<2\delta}\frac{|f(y)|}{|x-y|^{n}}dy\\
 & \lesssim\frac{1}{|B(x,2\delta)|}\int_{|x-y|<2\delta}|f(y)|dy\\
 & \leq Mf(x).
\end{align*}
This yields that 
\begin{align*}
(T^{\delta})^{*}f(x)=\sup_{\varepsilon>0}\left|T_{\varepsilon}^{\delta}f(x)\right| & \leq\sup_{\varepsilon>0}|T_{\varepsilon}^{\delta}f(x)-T_{\varepsilon}f(x)|+\sup_{\varepsilon>0}|T_{\varepsilon}f(x)|\\
 & \lesssim2\sup_{\rho>0}|T_{\rho}f(x)|+Mf(x)
\end{align*}
Note that then $(T^{\delta})^{*}$ is bounded on $L^{p}$, actually
$\{(T^{\delta})^{*}\}_{\delta>0}$ are uniformly bounded on $L^{p}$,
and consequently, taking into account Lemma \ref{lem:Trunc} and also
that for every $x$, the following limit exists
\[
\lim_{\varepsilon\rightarrow0^{+}}\int_{1>|x-y|>\varepsilon}K^{\delta}(x,y)dy,
\]
then for every $f\in L^{p}$, 
\[
T^{\delta}f(x)=\lim_{\varepsilon\rightarrow0}\int_{|x-y|>\varepsilon}K^{\delta}(x,y)f(y)dy\qquad\text{a.e.}
\]
This fact allows us to say that if $f\in L^{p}$, then
\begin{equation}
[b,T^{\delta}]f(x)=\lim_{\varepsilon\rightarrow0^{+}}\int_{|x-y|>\varepsilon}(b(x)-b(y))K^{\delta}(x,y)f(y)dy\qquad\text{a.e.}\label{eq:limaebTdelta}
\end{equation}
Now we are in the position to prove \ref{eq:Lemma3.10}. For every
$x\in\mathbb{R}^{n}$ we have that if $\varepsilon<\delta,$
\begin{align*}
|[b,T_{\varepsilon}]f(x)-[b,T_{\varepsilon}^{\delta}]f(x)| & =\left|\int_{\varepsilon<|x-y|<2\delta}(b(x)-b(y))K(x,y)f(y)dy\right.\\
 & \left.-\int_{\delta<|x-y|<2\delta}(b(x)-b(y))K^{\delta}(x,y)f(y)dy\right|\\
 & \leq|I_{1}(x)|+|I_{2}(x)|.
\end{align*}
By the smoothness of $b$ and the size condition of $K$, 
\begin{align*}
|I_{1}(x)| & \leq\int_{\varepsilon<|x-y|<\delta}|b(x)-b(y)|\frac{1}{|x-y|^{n}}|f(y)|dy\\
 & \leq\|\nabla b\|_{L^{\infty}}\sum_{j=0}^{\infty}\int_{\frac{2\delta}{2^{j+1}}<|x-y|<\frac{2\delta}{2^{j}}}\frac{|f(y)|}{|x-y|^{n-1}}dy\\
 & \lesssim\|\nabla b\|_{L^{\infty}}\sum_{j=0}^{\infty}\frac{2\delta}{2^{j+1}}\frac{|B(0,1)|}{\left(\frac{2\delta}{2^{j+1}}\right)^{n}|B(0,1)|}\int_{\frac{2\delta}{2^{j+1}}<|x-y|<\frac{2\delta}{2^{j}}}|f(y)|dy\\
 & \lesssim\|\nabla b\|_{L^{\infty}}\sum_{j=0}^{\infty}\frac{2\delta}{2^{j+1}}Mf(x)\lesssim\|\nabla b\|_{L^{\infty}}\delta Mf(x).
\end{align*}
For $I_{2}(x)$, we have that
\begin{align*}
|I_{2}(x)| & \leq\int_{\delta<|x-y|<2\delta}|b(x)-b(y)||K^{\delta}(x,y)||f(y)|dy\\
 & \leq\|\nabla b\|_{L^{\infty}}\int_{\delta<|x-y|<2\delta}\frac{|f(y)|}{|x-y|^{n-1}}dy\\
 & \lesssim\|\nabla b\|_{L^{\infty}}\delta\frac{1}{|B(x,\delta)|}\int_{|x-y|<\delta}|f(y)|dy\\
 & \leq\|\nabla b\|_{L^{\infty}}\delta Mf(x).
\end{align*}
Then 
\[
|[b,T_{\varepsilon}]f(x)-[b,T_{\varepsilon}^{\delta}]f(x)|\lesssim\|\nabla b\|_{L^{\infty}}\delta Mf(x)
\]
taking into account (\ref{eq:bTae}) and (\ref{eq:limaebTdelta}),
letting $\varepsilon\rightarrow0$ we have that the desired inequality
holds. The remainder of the result follows from the $L^{p}$ boundedness
of $M$ just letting $\delta\rightarrow0$.
\end{proof}
From the result above it follows that to conclude the proof it suffices
to show that if we fix $\delta>0$ and $b\in C_{c}^{1}(\mathbb{R}^{n})$,
then $[b,T^{\delta}]$ is compact on $L^{p}$ with $p>r'$. In order
to do that we are going to show that the conditions in Lemma \ref{lem:Yosida}
hold.

First we note that $H=\{[b,T^{\delta}]f\ :\ \|f\|_{L^{p}}\leq1\}$
is bounded due to the boundedness of $[b,T^{\delta}]$ on $L^{p}$.
Note that this boundedness follows from the Lemma above. Indeed
\begin{align*}
\|[b,T^{\delta}]f\|_{L^{p}} & \leq\|[b,T^{\delta}]f-[b,T]f\|_{L^{p}}+\|[b,T]f\|_{L^{p}}\\
 & \leq c\|Mf\|_{L^{p}}+\|[b,T]f\|_{L^{p}}\lesssim\|f\|_{L^{p}}.
\end{align*}
We continue our proof showing that 
\begin{equation}
\lim_{h\rightarrow0}\|[b,T^{\delta}]f(\cdot)-[b,T^{\delta}]f(\cdot+h)\|_{L^{p}}=0\label{eq:C1}
\end{equation}
uniformly on $f\in H$. We begin writing
\begin{align}
 & [b,T^{\delta}]f(x)-[b,T^{\delta}]f(x+h)\nonumber \\
 & =\int_{\mathbb{R}^{n}}(b(x)-b(y))K^{\delta}(x,y)f(y)dy-\int_{\mathbb{R}^{n}}(b(x+h)-b(y))K^{\delta}(x+h,y)f(y)dy\nonumber \\
 & =(b(x)-b(x+h))\int_{\mathbb{R}^{n}}K^{\delta}(x,y)f(y)dy\nonumber \\
 & +\int_{\mathbb{R}^{n}}(b(x+h)-b(y))(K^{\delta}(x,y)-K^{\delta}(x+h,y))f(y)dy\nonumber \\
 & =\int_{\mathbb{R}^{n}}I_{1}(x,y,h)dy+\int_{\mathbb{R}^{n}}I_{2}(x,y,h)dy.\label{eq:Split}
\end{align}
We treat each term separatedly. We begin with $I_{1}$.
\begin{align*}
\left|\int I_{1}(x,y,h)dy\right| & \leq\|\nabla b\|_{L^{\infty}}|h|\left|\int K^{\delta}(x,y)f(y)dy\right|\\
 & =\|\nabla b\|_{L^{\infty}}|h|\left|\int_{|x-y|>\delta}K(x,y)f(y)dy\right|\\
 & \leq\|\nabla b\|_{L^{\infty}}|h|T^{*}f(x).
\end{align*}
and hence, since as we noted above by \cite{G,R} $T^{*}f$ is bounded
on $L^{p}$ we have that if $\|f\|_{L^{p}}\leq1$
\begin{align*}
\left\Vert \int_{\mathbb{R}^{n}}I_{1}(\cdot,y,h)dy\right\Vert _{L^{p}} & \leq\|\nabla b\|_{L^{\infty}}|h|\left(\int_{\mathbb{R}^{n}}\left|T^{*}f(x)\right|^{p}dx\right)^{\frac{1}{p}}\\
 & \lesssim\|\nabla b\|_{L^{\infty}}|h|\|f\|_{L^{p}}\leq\|\nabla b\|_{L^{\infty}}|h|
\end{align*}
and consequently
\begin{equation}
\left\Vert \int_{\mathbb{R}^{n}}I_{1}(\cdot,y,h)dy\right\Vert _{L^{p}}\rightarrow0\label{eq:Piece1}
\end{equation}
when $|h|\rightarrow0$ uniformly on $\|f\|_{L^{p}}\leq1$. Now we
deal with $I_{2}$. We split the integral of $I_{2}$ into four regions
\begin{align*}
R_{1} & =\left\{ y\in\mathbb{R}^{n}\ :\ |x-y|>\delta,\ |x+h-y|>\delta\right\} ,\\
R_{2} & =\left\{ y\in\mathbb{R}^{n}\ :\ |x-y|>\delta,\ |x+h-y|<\delta\right\} ,\\
R_{3} & =\left\{ y\in\mathbb{R}^{n}\ :\ |x-y|<\delta,\ |x+h-y|>\delta\right\} \\
R_{4} & =\left\{ y\in\mathbb{R}^{n}\ :\ |x-y|<\delta,\ |x+h-y|<\delta\right\} 
\end{align*}
Note that by the definition of $K^{\delta}$, clearly $\int_{R_{4}}I_{2}(x,y,h)dy=0.$
Then it suffices to study the integral over the remainder of $R_{i}$.
Let us begin with $R_{1}$. First note that if $|h|<\frac{\delta}{2}$,
then $x,x+h\in\frac{1}{2}B(x,\delta)$.Then
\begin{align*}
 & \left|\int_{R_{1}}I_{2}(x,y,h)dy\right|\\
 & \leq\int_{R_{1}}|(b(x+h)-b(y))(K^{\delta}(x,y)-K^{\delta}(x+h,y))f(y)|dy\\
 & \leq2\|b\|_{L^{\infty}}\int_{|x-y|>\delta}|K^{\delta}(x,y)-K^{\delta}(x+h,y)||f(y)|dy\\
 & =2\|b\|_{L^{\infty}}\sum_{m=1}^{\infty}\int_{2^{m}B(x,\delta)\setminus2^{m-1}B(x,\delta)}|K^{\delta}(x,y)-K^{\delta}(x+h,y)||f(y)|dy\\
 & =2\|b\|_{L^{\infty}}\sum_{m=1}^{\infty}\left|2^{m}B\right|\frac{1}{\left|2^{m}B\right|}\int_{2^{m}B(x,\delta)\setminus2^{m-1}B(x,\delta)}|K^{\delta}(x,y)-K^{\delta}(x+h,y)||f(y)|dy\\
 & \leq2\|b\|_{L^{\infty}}\sum_{m=1}^{\infty}\left[\left|2^{m}B\right|\left(\frac{1}{\left|2^{m}B\right|}\int_{2^{m}B(x,\delta)\setminus2^{m-1}B(x,\delta)}|K^{\delta}(x,y)-K^{\delta}(x+h,y)|^{r}dy\right)^{\frac{1}{r}}\right.\\
 & \times\left.\left(\frac{1}{\left|2^{m}B\right|}\int_{2^{m}B(x,\delta)}|f(y)|^{r'}dy\right)^{\frac{1}{r'}}\right]\\
 & \leq2\|b\|_{L^{\infty}}M_{r'}f(x)\sum_{m=1}^{\infty}\left|2^{m}B\right|\left(\frac{1}{\left|2^{m}B\right|}\int_{2^{m}B(x,\delta)\setminus2^{m-1}B(x,\delta)}|K^{\delta}(x,y)-K^{\delta}(x+h,y)|^{r}dy\right)^{\frac{1}{r}}\\
 & \leq2\|b\|_{L^{\infty}}M_{r'}f(x)\sum_{m=0}^{\infty}\left|2^{m}B\right|^{\frac{1}{r'}}\max\left\{ \frac{|h|^{\gamma-\frac{n}{r'}}}{|B|^{\frac{\gamma}{n}}}2^{-m\gamma},\frac{|h|}{|B|^{(1+\frac{n}{r'})\frac{1}{n}}}2^{-(1+\frac{n}{r'})m}\right\} \\
 & \leq2c\|b\|_{L^{\infty}}M_{r'}f(x)\max\left\{ |h|^{\gamma-\frac{n}{r'}},|h|\right\} \left(\sum_{m=0}^{\infty}\frac{\left|2^{m}B\right|^{\frac{1}{r'}}}{|B|^{\frac{\gamma}{n}}}2^{-m\gamma}+\sum_{m=0}^{\infty}\frac{\left|2^{m}B\right|^{\frac{1}{r'}}}{|B|^{(1+\frac{n}{r'})\frac{1}{n}}}2^{-(1+\frac{n}{r'})m}\right)\\
 & \lesssim\|b\|_{L^{\infty}}\max\left\{ |h|^{\gamma-\frac{n}{r'}},|h|\right\} M_{r'}f(x).
\end{align*}
Consequently if $\|f\|_{L^{p}}\leq1$, 
\[
\left\Vert \int_{R_{1}}I_{2}(\cdot,y,h)dy\right\Vert _{L^{p}}\leq2c_{\delta}\|b\|_{L^{\infty}}\max\left\{ |h|^{\gamma-\frac{n}{r'}},|h|\right\} \|M_{r'}f\|_{L^{p}}\lesssim\max\left\{ |h|^{\gamma-\frac{n}{r'}},|h|\right\} 
\]
and hence 
\begin{equation}
\left\Vert \int_{R_{1}}I_{2}(\cdot,y,h)dy\right\Vert _{L^{p}}\rightarrow0\label{eq:Piece21}
\end{equation}
when $|h|\rightarrow0$ uniformly on $\|f\|_{L^{p}}\leq1$.

The integrals over $R_{2}$ and $R_{3}$ are symmetric so we deal
just with $R_{2}$. We may assume that $|h|$ is very small. Let $\rho>\delta+|h|$
such that $b$ vanishes outside the ball $B_{0}=B(0,\rho)$. Note
that then $b(\cdot+h)$ has support in $2B_{0}$. Then, since $R_{2}\subset B(x,|h|+\delta)\subset B(0,\rho)$,
we have for $|x|<3\rho$ that $R_{2}\subset4B_{0}$. Hence, using
the size condition of $K^{\delta}$, 
\begin{align*}
 & \left|\int_{R_{2}}I_{2}(x,y,h)dy\right|\\
\leq & \left|\int_{R_{2}}(b(x+h)-b(y))(K^{\delta}(x,y)-K^{\delta}(x+h,y))f(y)dy\right|\\
= & \left|\int_{R_{2}}(b(x+h)-b(y))K(x,y)\varphi(x-y)f(y)dy\right|\\
= & \left|\int_{R_{2}}(b(x+h)-b(y))K(x,y)\varphi(x-y)f(y)dy\right|\\
\leq & \int_{4B_{0}\cap R_{2}}\left|(b(x+h)-b(y))K(x,y)\varphi(x-y)f(y)\right|dy\\
\lesssim & \|b\|_{L^{\infty}}\int_{R_{2}\cap4B_{0}}\frac{|f(y)|}{|x-y|^{n}}dy\\
\lesssim & \frac{1}{\delta^{n}}\|b\|_{L^{\infty}}\frac{|4B_{0}|}{|4B_{0}|}\int_{R_{2}\cap4B_{0}}|f(y)|dy\\
\leq & \frac{1}{\delta^{n}}\|b\|_{L^{\infty}}|4B_{0}|\left(\frac{1}{|4B_{0}|}\int_{4B_{0}}|f(y)|^{r'}dy\right)^{\frac{1}{r'}}\left(\frac{|R_{2}|}{|4B_{0}|}\right)^{\frac{1}{r}}\\
\lesssim & \frac{\rho^{\frac{n}{r'}}}{\delta^{n}}\|b\|_{L^{\infty}}|R_{2}|^{\frac{1}{r}}M_{r'}f(x)
\end{align*}
In the case $|x|\geq3\rho$, since $|h|<\rho$ and $|x+h|>2\rho$,
we have that $b(x+h)=0$, and hence, arguing as above 
\begin{align*}
 & \left|\int_{R_{2}}I_{2}(x,y,h)dy\right|\\
 & \leq\int_{4B_{0}\cap R_{2}}\left|(b(x+h)-b(y))K(x,y)\varphi(x-y)f(y)\right|dy\\
 & =\int_{4B_{0}\cap R_{2}}\left|b(y)K(x,y)\varphi(x-y)f(y)\right|dy\\
 & \lesssim\|b\|_{L^{\infty}}\int_{R_{2}\cap4B_{0}}\frac{|f(y)|}{|x-y|^{n}}dy\\
 & \lesssim\frac{\rho^{\frac{n}{r'}}}{\delta^{n}}\|b\|_{L^{\infty}}|R_{2}|^{\frac{1}{r}}M_{r'}f(x)
\end{align*}
Hence, gathering the estimates above, for $\|f\|_{L^{p}}\leq1$, 
\begin{align*}
\left\Vert \int_{R_{2}}I_{2}(\cdot,y,h)dy\right\Vert _{L^{p}} & \lesssim\frac{\rho^{\frac{n}{r'}}}{\delta^{n}}\|b\|_{L^{\infty}}\|M_{r'}f\|_{L^{p}}|R_{2}|^{\frac{1}{r}}\\
 & \lesssim\frac{\rho^{\frac{n}{r'}}}{\delta^{n}}\|b\|_{L^{\infty}}|R_{2}|^{\frac{1}{r}}
\end{align*}
Since $|R_{2}|\rightarrow0$ as $|h|\rightarrow0$ then 
\begin{equation}
\left\Vert \int_{R_{2}}I_{2}(\cdot,y,h)dy\right\Vert _{L^{p}}\rightarrow0\label{eq:Piece22}
\end{equation}
when $|h|\rightarrow0$ uniformly on $\|f\|_{L^{p}}\leq1$. As we
mentioned above, a similar argument yields
\begin{equation}
\left\Vert \int_{R_{3}}I_{2}(\cdot,y,h)dy\right\Vert _{L^{p}}\rightarrow0\label{eq:Piece23}
\end{equation}
uniformly on $\|f\|_{L^{p}}\leq1$ when $|h|\rightarrow0$. Consequently,
(\ref{eq:C1}) follows combining (\ref{eq:Split}), (\ref{eq:Piece1}),
(\ref{eq:Piece21}), (\ref{eq:Piece22}) and (\ref{eq:Piece23}).

Finally we study the decay at infinity. Since $b$ is compactly supported,
there exists $R_{0}>0$ such that $\supp b\subset B(0,R_{0})$. Let
$x$ such that $|x|>R>3R_{0}$. Then $x\not\in\supp b$ so
\begin{align*}
|[b,T^{\delta}]f(x)| & =|b(x)T^{\delta}f(x)-T^{\delta}(fb)(x)|=\left|T^{\delta}(fb)(x)\right|\\
 & =\left|\int_{\supp b}b(y)K^{\delta}(x,y)f(y)dy\right|\\
 & \lesssim\|b\|_{L^{\infty}}\int_{\supp b}\frac{|f(y)|}{|x-y|^{n}}dy\\
 & \lesssim\|b\|_{L^{\infty}}\frac{1}{|x|^{n}}\int_{B(0,R_{0})}|f(y)|dy\\
 & \leq\|b\|_{L^{\infty}}\frac{1}{|x|^{n}}|B(0,R_{0})|^{\frac{1}{p'}}\|f\|_{L^{p}}.
\end{align*}
Hence for $R>3R_{0}$, 
\begin{align*}
\|\chi_{\{|x|>R\}}[b,T^{\delta}]f\|_{L^{p}} & \lesssim\left\Vert \frac{1}{|\cdot|^{n}}\chi_{\{|x|>R\}}(\cdot)\right\Vert _{L^{p}}\|b\|_{L^{\infty}}|B(0,R_{0})|^{\frac{1}{p'}}\|f\|_{L^{p}}\\
 & \simeq\frac{1}{R^{n-\frac{n}{p}}}\|b\|_{L^{\infty}}|B(0,R_{0})|^{\frac{1}{p'}}\|f\|_{L^{p}}
\end{align*}
and letting $R\rightarrow\infty$ leads to the desired conclusion.

\subsubsection{Proof of Theorem \ref{thm:compLrHor}}

Let $T$ be a $L^{r}$-Hörmander operator. We begin observing $L^{r}$-Hörmander
condition presented here was borrowed from \cite{BCADH} and is implied
by the one in papers such as \cite{LMRdT,LMRRR,LRdT,LRdT1,IFRR}.
In those papers it was shown that if $b\in BMO$ then $[b,T]$ is
bounded on $L^{p}(w)$ for every $w\in A_{p/r'}$ provided that $p>r'$
($w\in A_{p/r'}^{+}$ in the one sided setting). By inspection of
the proofs it can be easily checked that the dependence on the boundedness
constant on the $A_{p/r'}$ ($A_{p/r'}^{+}$ in the one-sided setting)
constant is increasing. In particular if we fix $p_{0}>r'$ we have
that $[b,T]$ is bounded on $L^{p_{0}}(w)$ for every weight $w\in A_{p_{0}/r'}$
($w\in A_{p/r'}^{+}$ in the one sided setting). Consequently, the
combination of this fact, Theorem \ref{thm:compLrHorUnweighted} and
the main result of \cite{HL} or \ref{thm:CorUnWeighted} in the one
sided setting yields the desired conclusion.

\subsubsection{Proof of Theorem \ref{thm:CompactFracUnw}}

It suffices to exploit results in \cite{GWY}. From the results settled
there it follows the following particular case. Let $\alpha\in(0,1)$
and 
\[
T_{K_{\alpha}}f(x)=\int_{\mathbb{R}}K_{\alpha}(x,y)f(y)dy\qquad x\not\in\supp f
\]
with kernel $K_{\alpha}$ such that 
\begin{equation}
|K_{\alpha}(x,y)|\leq\frac{1}{|x-y|^{1-\alpha}}\label{eq:size}
\end{equation}
and for $|x-y|>2|x-x'|$
\begin{equation}
|K_{\alpha}(x,y)-K_{\alpha}(x',y)|+|K_{\alpha}(y,x)-K_{\alpha}(y,x)|\leq\rho\left(\frac{|x-x'|}{|x-y|}\right)\frac{1}{|x-y|^{1-\alpha}}\label{eq:smooth}
\end{equation}
where $\rho:[0,1]\rightarrow[0,\infty)$ is a modulus of continuity,
namely, $\rho$ is continuous, increasing, subadditive, $\rho(0)=0$
and 
\[
\int_{0}^{1}\rho(t)\frac{dt}{t}<\infty.
\]
For that class of operators we have the following result which is
a particular case of \cite[Theorem 1.5]{GWY}.
\begin{thm}
\label{thm:GuoWuYang}Let $1<p<q<\infty$, $0\leq\alpha<1$, $\alpha=\frac{1}{p}-\frac{1}{q}.$
Then if $b\in CMO(\mathbb{R})$ then $[b,T_{K_{\alpha}}]$ is compact
from $L^{p}$ to $L^{q}$.
\end{thm}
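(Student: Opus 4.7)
The statement is advertised as a particular case of \cite[Theorem 1.5]{GWY}, so the shortest plan is simply to verify that the size condition \eqref{eq:size} and the Dini-type smoothness condition \eqref{eq:smooth} match the hypotheses of that theorem and invoke it directly. For a self-contained proof I would mirror the scheme already used for Theorem \ref{thm:compLrHorUnweighted}, based on the Fr\'echet--Kolmogorov--Riesz criterion (Lemma \ref{lem:Yosida}), with the fractional scale $|x-y|^{\alpha-1}$ playing the role of the Calder\'on--Zygmund scale $|x-y|^{-n}$.

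The first reduction is standard. Since a BMO--commutator estimate of the form $\|[b, T_{K_\alpha}]\|_{L^p \to L^q} \lesssim \|b\|_{BMO}$ holds in this setting (and follows from the boundedness of $T_{K_\alpha}:L^p \to L^q$, which in turn is a consequence of \eqref{eq:size} and \eqref{eq:smooth}), and since compact operators form a closed subspace in operator norm, I may approximate $b \in CMO$ by $b_j \in C_c^\infty$ in $BMO$ and reduce to the case $b \in C_c^1$. Next I would introduce the smooth truncation $K_\alpha^\delta(x,y) = \varphi^\delta(x-y) K_\alpha(x,y)$ with the same cutoff $\varphi^\delta$ used earlier, and establish the pointwise bound
\begin{equation*}
|[b, T_{K_\alpha}]f(x) - [b, T_{K_\alpha^\delta}]f(x)| \lesssim \delta^\alpha \|\nabla b\|_{L^\infty} M_\alpha f(x),
\end{equation*}
where $M_\alpha$ is the fractional maximal operator. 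The $(L^p, L^q)$ boundedness of $M_\alpha$ then reduces matters to showing compactness of $[b, T_{K_\alpha^\delta}]$ for a fixed $\delta > 0$ and $b \in C_c^1$.

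For such a truncated commutator, the three Fr\'echet--Kolmogorov conditions would be checked along the same lines as in Theorem \ref{thm:compLrHorUnweighted}. Boundedness of the image in $L^q$ follows from the operator norm estimate already established; the uniform decay at infinity follows from the compact support of $b$ combined with the size condition \eqref{eq:size}, using H\"older with the modified exponent coming from $1/|x-y|^{1-\alpha}$ instead of $1/|x-y|^{n}$. I expect the main obstacle to be the uniform equicontinuity of $\{[b,T_{K_\alpha^\delta}]f : \|f\|_{L^p} \le 1\}$ in the $L^q$ norm. As before, I would split into regions $R_1, R_2, R_3$ based on the relative positions of $x, x+h, y$ with respect to the truncation scale $\delta/2$, together with a further splitting on $R_1$ into dyadic annuli; this is where the Dini condition $\int_0^1 \rho(t)\, dt/t < \infty$ enters, replacing the geometric decay that came for free from the $L^r$-H\"ormander hypothesis in the previous argument. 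The estimates on $R_2, R_3$ would use only \eqref{eq:size} and the compact support of $b$, exactly as in the singular integral case.
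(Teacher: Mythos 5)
Your first option---simply invoke \cite[Theorem 1.5]{GWY} as a particular case---is exactly what the paper does: the theorem is stated with a direct citation and no further argument, and the only verification actually carried out in the paper is the separate check that the specific kernel of $I_{\alpha}^{+}$ satisfies \eqref{eq:size} and \eqref{eq:smooth}. The self-contained Fr\'echet--Kolmogorov route you sketch as an alternative is reasonable and closely parallels the paper's proof of Theorem \ref{thm:compLrHorUnweighted}; two quantitative details worth tightening: the truncation-error bound comes out as $\lesssim \delta\,\|\nabla b\|_{L^{\infty}} M_{\alpha}f(x)$ rather than $\delta^{\alpha}$ (both vanish as $\delta\to 0$, so the conclusion is unchanged), and on $R_1$ the geometric annular sum $\sum_m 2^{-m\gamma}$ that appeared in the $L^{r}$-H\"ormander case gets replaced by $\sum_{m}\rho\bigl(|h|/(2^m\delta)\bigr)$, whose finiteness and decay to $0$ as $|h|\to 0$ is exactly what the Dini hypothesis $\int_0^1 \rho(t)\,dt/t<\infty$ provides.
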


It is straightforward to check that Theorem \ref{thm:CompactFracUnw}
is a direct Corollary of this result, just choosing
\[
K_{\alpha}(x,y)=\frac{1}{(y-x)^{1-\alpha}}\chi_{\left\{ z>0\right\} }(y-x).
\]
Details are left to the reader. 

\subsubsection{Proof of Theorem \ref{thm:compFrac}}

It was shown in \cite{BL} that if $1<p<q<\infty$ with $\frac{1}{p}-\frac{1}{q}=\alpha$
then $[b,I_{\alpha}^{+}]:L^{p}(w^{p})\rightarrow L^{q}(w^{q})$ for
every $w\in A_{p,q}^{+}$. By inspection of the proofs it can be easily
checked that the dependence on the boundedness constant on the $A_{p,q}^{+}$
constant is increasing. In particular if we fix $1<p_{0}<q_{0}<\infty$
such that $\frac{1}{p_{0}}-\frac{1}{q_{0}}=\alpha$ we have that $[b,I_{\alpha}^{+}]:L^{p}(w^{p})\rightarrow L^{q}(w^{q})$
for every weight $w\in A_{p_{0},q_{0}}^{+}$. The combination of this
fact, Theorem \ref{thm:CompactFracUnw} and \ref{thm:CorUnWeightedApq}
yields the desired conclusion.

\bibliographystyle{abbrv}
\bibliography{refes}

\end{document}